\newtheorem{theorem}{Theorem}
\newtheorem{definition}{Definition}
\newtheorem{proposition}{Proposition}
\newtheorem{corollary}{Corollary}
\newtheorem{lemma}{Lemma}
\newtheorem{remark}{Remark}
\newtheorem{assumption}{Assumption}
\newcommand{\ind}{{\perp\!\!\!\perp}}
\newcommand{\esssup}{{\mathrm{ess}\sup}}
\title{ \textsc{Anti-concentration of Suprema of Gaussian Processes and Gaussian Order Statistics}}
\author{Alexander Giessing\thanks{Department of Statistics, University of Washington, Seattle, WA. E-mail: giessing@uw.edu.}}
\date{\today}
\begin{document}
	\maketitle
	\begin{abstract}
	We derive, up to a constant factor, matching lower and upper bounds on the concentration functions of suprema of separable centered Gaussian processes and order statistics of Gaussian random fields.	
	These bounds reveal that suprema of separable centered Gaussian processes $\{X_u : u \in U\}$ exhibit the same anti-concentration properties as a single Gaussian random variable with mean zero and variance $\mathrm{Var}(\sup_{u \in U} X_u)$. To apply these results to high-dimensional statistical problems, it is therefore essential to understand the asymptotic behavior of $\mathrm{Var}(\sup_{u \in U} X_u)$ as the dimension or metric entropy of the index set $U$ increases. Consequently, we also derive lower and upper bounds on this quantity.
	\\~\\
	\noindent \textbf {Keywords:} {Concentration Function; Anti-Concentration; Gaussian Process; Order Statistics.}
	\end{abstract}

\section{Introduction}\label{sec:Intro}
From the perspective of large deviations and measure concentration, the supremum of a separable centered Gaussian process $\{X_u : u \in U\}$ behaves asymptotically similar to a single Gaussian random variable with mean zero and variance $\sup_{u \in U} \mathrm{Var}(X_u)$~\citep[][]{landau1970supremum, marcus1972sample, borell1975brunn, ibragimov1976norms}.
In this paper, we show that this behavior extends to the concentration function of the supremum, albeit with a different variance that captures the local variation around the mode of the distribution rather than the global variation of the supremum. 

The study of concentration functions was initiated by~\cite{levy1954theorie}, 
who defined the \emph{concentration function} of a random variables $X$ as
\begin{align}\label{eq:intro-1}
	Q(X, \varepsilon) := \sup_{t \in \mathbb{R}} \mathbb{P}\left\{ t \leq X \leq t + \varepsilon \right\}, \quad \varepsilon > 0.
\end{align}
By examining the asymptotic behavior of concentration functions as $\varepsilon \rightarrow 0$, L{\'e}vy and his contemporaries obtained explicit rates of convergence of the distribution of sums of i.i.d. random variables to normal limit laws in L{\'e}vy and Kolmogorov distances~\citep[e.g.][]{kolmogorov1956two, kolmogorov1958sur, esseen1966kolmogorov, esseen1968concentration, kesten1969sharper, miroshnikov1983remarks, lecam1986asymptotic}.

Recent developments in theoretical computer science, combinatorics, random matrix theory, and high-dimensional statistics have renewed the interest in concentration functions, specifically in the form of so-called \emph{anti-concentration inequalities}~\citep[e.g.][]{carbery2001Distributional, nazarov2003MaximaPerimeter, klivans2008learning, rudelson2008littlewood, rudelson2009smallest, rudelson2014small, nourdin2009density,   tao2009inverse, mossel2010noise, goetze2014ExplicitRates, chernozhukov2015ComparisonAnti, bobkov2015concentration, chen2015error, chen2015multivariate, meka2016anti, goetze2019LargeBall, deng2020beyond, fox2021anti, kwan2023anticoncentration}. These inequalities provide non-asymptotic upper bounds on concentration functions.

Within the field of high-dimensional statistics, anti-concentration inequalities for the suprema of Gaussian processes and the maxima of Gaussian random fields have received particular attention, as they prove useful in establishing high-dimensional analogues to the classical CLTs and consistency of bootstrap procedures~\citep{chernozhukov2014AntiConfidenceBands, chernozhukov2015ComparisonAnti, chernozhukov2017Nazarov}. However, the existing anti-concentration inequalities require the finite-dimensional marginals of the Gaussian process to be non-degenerate and therefore do not apply to Gaussian processes indexed by the structured sets that naturally arise in many high-dimensional statistical problems, such as Euclidean balls, cross-polytopes, and cones. Progress in this area has been incremental and problem-specific~\citep{chernozhukov2021NearlyOptimalCLT, deng2020beyond, lopes2019BootstrappingSpectral, lopes2020bootstrapping, lopes2022central, lopes2022improved}.

In this paper we resolve this issue. We derive a two-sided anti-concentration inequality for suprema of separable centered Gaussian processes  $\{X_u : u \in U\}$ without imposing assumptions on the distribution of their finite-dimensional marginals. Lower and upper bounds of the inequality differ only by a multiplicative constant and are thus asymptotically optimal as $\varepsilon \rightarrow 0$. As a preliminary result of significant interest in itself we also obtain lower and upper bounds on the concentration function of order statistics of Gaussian random fields (Sections~\ref{subsec:AntiConcentrationIneuqalities} and~\ref{sec:Proof-AntiConcentration}). We recover the Gaussian anti-concentration inequalities by~\cite{chernozhukov2014AntiConfidenceBands, chernozhukov2015ComparisonAnti, chernozhukov2017Nazarov} and~\cite{deng2020beyond} as special cases from the new inequality. We show that if the one-dimensional marginals of the Gaussian process are (strongly) correlated, the new bounds on the concentration function can be substantially tighter than the previous ones (Section~\ref{subsec:Literature}). Since the new bounds depend on $\mathrm{Var}(\sup_{u \in U} X_u)$, it is crucial to understand the asymptotic behavior of this variance as the metric entropy or dimension of the index set $U$ increases. We therefore also derive lower and upper bounds on this quantity (Section~\ref{subsec:BoundsVariance}). 

\section{Anti-concentration inequalities for Gaussian random fields and Gaussian processes}\label{subsec:AntiConcentrationIneuqalities}
We begin with the main results of this paper: anti-concentration inequalities for order statistics of finite-dimensional Gaussian random fields and suprema of infinite-dimensional separable Gaussian processes.

An \emph{($n$-dimensional) Gaussian random field} is a collection $X = \{X_i : 1 \leq i \leq n\}$ of $n$ random variables such that each finite subset of this collection is distributed according to a multivariate normal law. We call a Gaussian random field \emph{centered} if $\mathbb{E}[X_i] = 0$ for all $1 \leq i \leq n$ and denote by $X_{(1)} \leq  \ldots X_{(n)}$ its order statistics in non-decreasing order. For such Gaussian random fields we have the following anti-concentration result:

\begin{theorem}\label{theorem:AntiConcentration-OrderStatistics}
	Let $X = \{X_i : 1 \leq i \leq n\}$ be a Gaussian random field such that $\mathrm{Var}(X_i)  > 0$ and $\mathrm{Cor}(X_i, X_j) < 1$ for all $1 \leq i \neq j \leq n$. Denote the $k^{th}$ order statistic of the $X_i$'s by $X_{(k)}$. Then, for all $\varepsilon > 0$ and $1 \leq k \leq n$,
	\begin{align*}
		\frac{\varepsilon/\sqrt{12}}{ \sqrt{\mathrm{Var}\left(X_{(k)}\right) + \varepsilon^2/ 12}} \leq Q\left(X_{(k)},\varepsilon  \right) \leq \frac{\varepsilon\sqrt{12}}{ \sqrt{\mathrm{Var}\left(X_{(k)}\right) + \varepsilon^2/ 12}}.
	\end{align*}
	The result remains true when $X_{(k)}$ is replaced by the $k^{th}$ order statistic of the $|X_i|$'s.
\end{theorem}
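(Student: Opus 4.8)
The plan is to convert the Lévy concentration function into the sup-norm of a density and then treat the two sides by quite different arguments. Let $U\sim\mathrm{Unif}[0,1]$ be independent of $X$. For any random variable $Y$ the law of $Y+\varepsilon U$ has density $g_Y(y)=\varepsilon^{-1}\mathbb{P}\{y-\varepsilon\le Y\le y\}$, so that
\[
Q(Y,\varepsilon)=\varepsilon\,\|g_Y\|_\infty,\qquad \mathrm{Var}(Y+\varepsilon U)=\mathrm{Var}(Y)+\varepsilon^{2}/12.
\]
With $Y=X_{(k)}$ and $\tau^{2}:=\mathrm{Var}(X_{(k)})+\varepsilon^{2}/12=\mathrm{Var}(X_{(k)}+\varepsilon U)$, the theorem is exactly the statement $1/\sqrt{12\,\tau^{2}}\le\|g_{X_{(k)}}\|_\infty\le\sqrt{12}/\tau$. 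Note that $g_{X_{(k)}}$ exists regardless of whether $X_{(k)}$ itself has a density.

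For the lower bound no Gaussianity is needed. For any random variable $W$ with density $h$, mean $\mu$, and any $M\ge\|h\|_\infty$, a rearrangement (bathtub) argument shows that among all densities bounded by $M$ with mean $\mu$ the centred uniform density $M\,\mathbbm{1}_{[\mu-\frac1{2M},\mu+\frac1{2M}]}$ minimises the second central moment; hence $\mathrm{Var}(W)\ge 1/(12\|h\|_\infty^{2})$, i.e. $\|h\|_\infty\ge 1/\sqrt{12\,\mathrm{Var}(W)}$. Taking $W=X_{(k)}+\varepsilon U$ gives the left-hand inequality at once, for every $k$ and, in fact, for an arbitrary square-integrable $Y$ in place of $X_{(k)}$.

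For the upper bound one must show $\|g_{X_{(k)}}\|_\infty^{2}\,\mathrm{Var}(X_{(k)}+\varepsilon U)\le 12$, and this is where the Gaussian structure and the hypotheses $\mathrm{Var}(X_i)>0$, $\mathrm{Cor}(X_i,X_j)<1$ enter in an essential way. The first thing I would try is log-concavity: if $X_{(k)}$ has a log-concave density then so does $g_{X_{(k)}}$ (convolution with the uniform preserves log-concavity, by Prékopa's theorem), and the standard one-dimensional bound $\|g\|_\infty^{2}\,\mathrm{Var}(g)\le 1$ for log-concave $g$ then closes the proof with room to spare. For $k=n$ this reduces to log-concavity of the law of the maximum of a Gaussian vector: writing $X=AZ$ with $Z$ standard Gaussian and $a_i$ the rows of $A$, the sub-level sets $K_t=\{z:\max_i\langle a_i,z\rangle\le t\}$ are convex and satisfy $K_{\lambda s+(1-\lambda)t}\supseteq\lambda K_s+(1-\lambda)K_t$ (immediate from convexity of $z\mapsto\max_i\langle a_i,z\rangle$), so Ehrhard's inequality for the Gaussian measure shows $\Phi^{-1}\circ F_{X_{(n)}}$ is concave; combined with the level-set (co-area) formula for $f_{X_{(n)}}$ this yields the required bound on $\|g_{X_{(n)}}\|_\infty$. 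The case $k=1$ is symmetric, and the $|X_i|$-statistics follow by applying the maximum/minimum case to the $2n$ Gaussian variables $\{\pm X_i\}$ (for general ranks) or by the same convex-set argument with the slabs $\{|\langle a_i,z\rangle|\le t\}$.

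The step I expect to be the genuine obstacle is the general order statistic $1<k<n$ (and its $|X_i|$ analogue): now $\{X_{(k)}\le t\}$ is only a \emph{union} of polyhedra, so neither the Minkowski inclusion nor Ehrhard's inequality survives, and one should be wary that $f_{X_{(k)}}$ can fail to be log-concave altogether when the variances $\mathrm{Var}(X_i)$ differ widely. The route I would pursue is the exact density identity
\[
f_{X_{(k)}}(t)=\sum_{j=1}^{n} f_{X_j}(t)\,\mathbb{P}\bigl\{\#\{i\ne j:X_i<t\}=k-1\;\big|\;X_j=t\bigr\},
\]
which exhibits $f_{X_{(k)}}$ as a mixture over the index $j$ realising the $k$-th rank, with each mixture component log-concave (a Gaussian density times a Gaussian-orthant probability that is log-concave in $t$); one then bounds each component by the reciprocal square root of its conditional variance and controls these conditional variances in terms of $\mathrm{Var}(X_{(k)})$ via Gaussian conditioning identities, using $\mathrm{Var}(X_i)>0$ and $\mathrm{Cor}(X_i,X_j)<1$ to keep every component non-degenerate. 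Making that comparison quantitative — showing that conditioning on ``$X_j$ realises the $k$-th rank'' cannot shrink the relevant spread by more than a universal factor — is the crux of the whole argument.
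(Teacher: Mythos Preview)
Your reduction to bounding $\mathrm{Var}(W)\,M^2(W)$ for $W=X_{(k)}+\varepsilon U$, your lower bound via the bathtub argument, and your observation that the density $f_{X_{(k)}}$ is a \emph{sum} of log-concave terms (each a Gaussian density times a Gaussian-orthant probability) are all exactly in line with the paper. The paper's lower bound in Proposition~\ref{proposition:S-Concavity-Affine-Invariance} is precisely your rearrangement inequality.

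The gap is in your upper bound for general $1<k<n$. You correctly note that $f_{X_{(k)}}$ need not be log-concave and propose to bound each log-concave summand $g_j$ by $C/\sigma_j$ (for some conditional spread $\sigma_j$) and then compare the $\sigma_j$'s to $\mathrm{Var}(X_{(k)})$. But the $g_j$'s are sub-probability densities with total mass $p_j=\mathbb{P}\{X_j\text{ realises rank }k\}$, so the log-concave bound actually gives $\|g_j\|_\infty\lesssim p_j/\sigma_j$; summing these and then trying to dominate $\sum_j p_j/\sigma_j$ by $1/\sqrt{\mathrm{Var}(X_{(k)})}$ via ``Gaussian conditioning identities'' has no obvious mechanism --- the conditional spreads $\sigma_j$ can be much smaller than $\sqrt{\mathrm{Var}(X_{(k)})}$ for individual $j$, and there is no evident cancellation. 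You yourself flag this step as ``the crux of the whole argument'' without supplying it.

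The paper sidesteps this entirely with one clean structural observation: a finite sum of log-concave functions is $s$-concave for every $s<0$ (Lemma~\ref{lemma:S-Concavity-Sum}, a two-line Jensen argument). Hence $f_{X_{(k)}}$, and after convolution with the uniform also the density of $X_{(k)}+\varepsilon U$, is $s$-concave for all $s<0$. The paper then proves (Proposition~\ref{proposition:S-Concavity-Affine-Invariance}) that any random variable $Z$ with $s$-concave density, $s\in(-1/3,0]$, satisfies $\mathrm{Var}(Z)M^2(Z)\le 4(1+s)^3/((1+3s)(1+2s)^2)$; taking $s=-1/6$ gives the constant $12$. So the missing idea is not a comparison of conditional variances at all, but rather: weaken log-concavity to $s$-concavity, which is closed under addition, and show the affine-invariant bound survives this weakening. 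The $|X_i|$ case follows identically since each summand in that density is again log-concave.
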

\begin{remark}\label{remark:theorem:AntiConcentration-OrderStatistics}
	The lower bound holds for arbitrary random variables, not just for order statistics of Gaussian random fields.
\end{remark}
We defer the proof of this theorem to Section~\ref{sec:Proof-AntiConcentration}. By specializing to the maximum of a centered Gaussian random field, we can remove the conditions on the marginal and joint distribution:

\begin{corollary}\label{corollary:theorem:AntiConcentration-OrderStatistics}
	Let $X = \{X_i : 1 \leq i \leq n\}$ be a centered Gaussian random field. Set $\widetilde{Z}_n = \max_{1 \leq i \leq n} |X_i|$. Then, for all $\varepsilon > 0$,
	\begin{align*}
		\frac{\varepsilon/\sqrt{12}}{\sqrt{\mathrm{Var}(\widetilde{Z}_n) + \varepsilon^2/12}} \leq  Q(\widetilde{Z}_n, \varepsilon) \leq \frac{\varepsilon}{\sqrt{\mathrm{Var}(\widetilde{Z}_n) + \varepsilon^2/12}}.
	\end{align*}
\end{corollary}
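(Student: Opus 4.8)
The lower bound requires no argument: by Remark~\ref{remark:theorem:AntiConcentration-OrderStatistics} the left-hand inequality of Theorem~\ref{theorem:AntiConcentration-OrderStatistics} holds for an arbitrary random variable, so it applies verbatim to $\widetilde Z_n$. The content of the corollary is the upper bound, which is a factor $\sqrt{12}$ sharper than what one gets by specializing Theorem~\ref{theorem:AntiConcentration-OrderStatistics} to the top order statistic of the $|X_i|$'s; producing this improved constant is where the structure of the maximum (as opposed to a general order statistic) must be exploited.

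The plan is as follows. First handle the degenerate case: if $\mathrm{Var}(X_i)=0$ for all $i$ then $\widetilde Z_n\equiv 0$, $\mathrm{Var}(\widetilde Z_n)=0$, and $Q(\widetilde Z_n,\varepsilon)=1\le\sqrt{12}=\varepsilon/\sqrt{\varepsilon^2/12}$. Otherwise fix $i_0$ with $\sigma_{i_0}^2:=\mathrm{Var}(X_{i_0})>0$; since $\widetilde Z_n\ge|X_{i_0}|$ pointwise, $\widetilde Z_n$ stochastically dominates the half-normal law $\sigma_{i_0}|N(0,1)|$, so $F_{\widetilde Z_n}$ is continuous, strictly increasing on $[0,\infty)$, and carries no atom at the origin (alternatively, perturb $X_i$ to $X_i+\delta g_i$ with i.i.d.\ standard normal $g_i\ind X$, run the argument below, and let $\delta\downarrow 0$ via a portmanteau argument applied to open $\varepsilon$-intervals). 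Next write $\widetilde Z_n=\max_{1\le i\le 2n}Y_i$ with $Y=(X_1,\dots,X_n,-X_1,\dots,-X_n)$, realized as $Y_i=\langle a_i,G\rangle$ for a standard Gaussian vector $G$ in $\mathbb{R}^N$: for $t\ge 0$ the sublevel set $\{\widetilde Z_n\le t\}=\{g:\max_i|\langle a_i,g\rangle|\le t\}=tK$ is a dilate of the fixed symmetric convex body $K$, hence $\lambda(t_0K)+(1-\lambda)(t_1K)=(\lambda t_0+(1-\lambda)t_1)K$, and Ehrhard's inequality gives that $\psi(t):=\Phi^{-1}(\mathbb{P}(\widetilde Z_n\le t))$ is concave on $\mathbb{R}$; equivalently $\widetilde Z_n\stackrel{d}{=}h(G_1)$ with $G_1\sim N(0,1)$ and $h:=\psi^{-1}$ convex and strictly increasing. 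The Gaussian isoperimetric/concentration inequality supplies the extra input that $h$ is Lipschitz, and in particular $\mathrm{Var}(\widetilde Z_n)\le\max_i\mathrm{Var}(X_i)$. It then remains to combine these facts into a one-dimensional estimate: writing $Q(\widetilde Z_n,\varepsilon)=\sup_s[\Phi(v(s))-\Phi(s)]$, where $v$ is defined by $h(v(s))=h(s)+\varepsilon$ and is non-decreasing, $1$-Lipschitz, with $v-\mathrm{id}$ non-increasing (all by convexity of $h$), one shows that $\mathrm{Var}(\widetilde Z_n)=\mathrm{Var}(h(G_1))$ is, for a prescribed value of the concentration function, at least the variance of a uniform law on an arithmetic progression with the same concentration function value — the configuration responsible for the constant $1/12$ — and rearranging yields the claimed bound.

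I expect the last step — extracting the \emph{sharp} constant $1$ from the \emph{qualitative} concavity of $\psi$ together with the variance bound — to be the main obstacle. Concavity of $\psi$ alone does not suffice (the log-normal law has concave $\psi$ but violates the inequality), so the argument must genuinely use both the Lipschitz property of $h$, equivalently $\mathrm{Var}(\widetilde Z_n)\le\max_i\mathrm{Var}(X_i)$, to bound how fast $F_{\widetilde Z_n}$ can rise, and the half-normal domination to exclude spurious concentration near the origin; isolating the arithmetic-progression law as the extremal configuration and pushing through the resulting variance computation is the delicate part.
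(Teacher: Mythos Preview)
The paper's proof is far simpler than your proposal and does not actually obtain the constant $1$ in the upper bound. The argument is just: set $\widetilde X_i=X_i$, $\widetilde X_{n+i}=-X_i$ so that $\widetilde Z_n=\max_{1\le i\le 2n}\widetilde X_i$; delete any coordinate with zero variance or perfectly correlated with another (this leaves the maximum unchanged almost surely); then apply Theorem~\ref{theorem:AntiConcentration-OrderStatistics} to the top order statistic of the surviving field. This yields the upper bound with constant $\sqrt{12}$, not $1$. In fact the printed constant $1$ in the corollary is almost certainly a typo: the proof of Theorem~\ref{theorem:AntiConcentration-SeparableProcess} invokes this very corollary but writes the resulting upper bound as $\varepsilon\sqrt{12}/\sqrt{\mathrm{Var}(\widetilde Z_n)+\varepsilon^2/12}$, confirming that $\sqrt{12}$ is what the paper actually establishes and subsequently uses.

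Your proposal takes the printed constant at face value and attempts a genuinely sharper bound via Ehrhard's inequality. You are right that the stated constant cannot come out of Theorem~\ref{theorem:AntiConcentration-OrderStatistics} alone, and the Ehrhard step is sound: the sublevel sets $\{\widetilde Z_n\le t\}=tK$ are nested dilates of a fixed symmetric convex body, so $\Phi^{-1}(F_{\widetilde Z_n}(t))$ is indeed concave. But the final step --- identifying an ``arithmetic progression law'' as the extremal configuration and extracting the constant $1/12$ from it --- is not an argument but a hope. You yourself flag it as the delicate part, and nothing in the proposal explains why the variance of $h(G_1)$ under your constraints is minimized by that configuration, or how the Lipschitz bound on $h$ and the half-normal domination actually combine to force it. As written, that step is a genuine gap. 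If the aim is to match the paper, you should simply reduce to Theorem~\ref{theorem:AntiConcentration-OrderStatistics} as above and accept $\sqrt{12}$; if you can truly close the gap and reach $1$, that would be a strict improvement over what the paper proves.
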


\begin{proof}[Proof of Corollary~\ref{corollary:theorem:AntiConcentration-OrderStatistics}]
	Set $\widetilde{X}_i = X_i$ and $\widetilde{X}_{n +i} = -X_i$ for $ 1\leq i \leq n$. Then $\widetilde{Z}_n = \max_{1 \leq i \leq n} |X_i| \equiv \max_{1 \leq i \leq 2n} \widetilde{X}_i$. 	Without loss of generality we may assume that $\mathrm{Var}(X_i)  > 0$ and $\mathrm{Cor}(\widetilde{X}_j, \widetilde{X}_k) < 1$ for all $1 \leq i \leq n$ and $1 \leq j \neq k \leq 2n$. Indeed, if there exists $X_j$ with $\mathrm{Var}(X_j) = 0$, then $X_j = 0$ almost surely and thus $\widetilde{Z}_n = \max_{1 \leq i \leq n, i \neq j} |X_i|$ almost surely. Similarly, if there exist perfectly correlated $\widetilde{X}_i, \widetilde{X}_j$ with $1 \leq i \neq j \leq 2n$, then we can delete on of them, say $\widetilde{X}_j$, and $\widetilde{Z}_n = \max_{1 \leq i \leq 2n, i \neq j} \widetilde{X}_i$ almost surely. Hence, the claim follows from Theorem~\ref{theorem:AntiConcentration-OrderStatistics}. If $\mathrm{Var}(X_i) = 0$ for all $1 \leq i \leq n$, then $ Q(\widetilde{Z}_n, \varepsilon)  = 1$ and the left inequality is sharp while the right inequality is trivial.
\end{proof}

Next, we lift Corollary~\ref{corollary:theorem:AntiConcentration-OrderStatistics} from $n$-dimensional Gaussian random fields to infinite-dimensional Gaussian processes. To this end, we introduce the following additional notation and concepts. Let $X = \{X_u : u \in U\}$ be a generic stochastic process defined on some probability space $(\Omega, \mathcal{A}, \mathbb{P})$ and let $(U,d)$ be a pseudo-metric space. $X$ is \emph{separable} if there exists $U_0 \subset U$, $U_0$ countable, and $\Omega_0 \subset \Omega$ with $\mathbb{P}\{\Omega_0\} = 1$ such that $X_u(\omega) \in \overline{X_v(\omega) : v \in U_0 \cap B_d(u, \varepsilon)}$ for all $\omega \in \Omega_0$, $u \in U$ and $\varepsilon > 0$, where $B_d(u, \varepsilon)$ is the open $d$-ball about $u$ with radius $\varepsilon$.
Consequently, if $X$ is separable, then $(U,d)$ is separable. Moreover, if $X$ is separable, then $\sup_{u \in U} X_u = \sup_{u \in U_0} X_u$ a.s., and the latter, being a countable supremum, is measurable. The same holds for $|X|$. $X$ is a \emph{centered Gaussian process} if its finite-dimensional marginals are multivariate normal with mean zero.

\begin{theorem}\label{theorem:AntiConcentration-SeparableProcess}
	Let $X = \{X_u : u \in U\}$ be a separable centered Gaussian process. Set $\widetilde{Z} = \sup_{u \in U}|X_u|$ and assume that $\widetilde{Z} < \infty$ a.s. For all $\varepsilon > 0$,
	\begin{align*}
		\frac{\varepsilon/\sqrt{12}}{ \sqrt{\mathrm{Var}(\widetilde{Z}) + \varepsilon^2/ 12}} \leq Q(\widetilde{Z}, \varepsilon) \leq \frac{\varepsilon\sqrt{12}}{ \sqrt{\mathrm{Var}(\widetilde{Z}) + \varepsilon^2/ 12}}.
	\end{align*}
\end{theorem}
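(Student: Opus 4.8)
The plan is to deduce the infinite-dimensional bound from its finite-dimensional counterpart, Corollary~\ref{corollary:theorem:AntiConcentration-OrderStatistics}, by a monotone approximation argument. Since $X$ is separable, I would fix a countable set $U_0 = \{u_1, u_2, \dots\} \subset U$ with $\widetilde Z = \sup_{u \in U}|X_u| = \sup_{i \geq 1}|X_{u_i}|$ a.s., and put $\widetilde Z_n := \max_{1 \leq i \leq n}|X_{u_i}|$, so that $\widetilde Z_n \uparrow \widetilde Z$ a.s. Each $\widetilde Z_n$ is the maximum of the absolute values of the centered Gaussian random field $\{X_{u_i} : 1 \leq i \leq n\}$, so Corollary~\ref{corollary:theorem:AntiConcentration-OrderStatistics} applies to it and gives $Q(\widetilde Z_n, \eta) \leq \eta/\sqrt{\mathrm{Var}(\widetilde Z_n) + \eta^2/12}$ for all $\eta > 0$.

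The crucial preliminary is to show $\widetilde Z \in L^2$, so that $\mathrm{Var}(\widetilde Z) < \infty$ and the asserted bound is non-trivial. First, $\widetilde Z \geq |X_u|$ for every $u$ yields $\mathbb{P}\{\widetilde Z > M\} \geq \sup_{u \in U}\mathbb{P}\{|X_u| > M\}$ for all $M$, so the hypothesis $\widetilde Z < \infty$ a.s.\ forces $\sup_{u \in U}\mathrm{Var}(X_u) < \infty$. Then the classical integrability theorem for suprema of Gaussian processes (Landau--Shepp, Marcus--Shepp, Fernique; cf.\ the references in Section~\ref{sec:Intro}) upgrades $\widetilde Z < \infty$ a.s.\ to $\mathbb{E}[\widetilde Z] < \infty$ and, together with the Borell--TIS inequality, to sub-Gaussian concentration of $\widetilde Z$ about its mean; in particular $\widetilde Z \in L^2$. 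Applying monotone convergence to $\widetilde Z_n \uparrow \widetilde Z$ and to $\widetilde Z_n^2 \uparrow \widetilde Z^2$ then gives $\mathbb{E}[\widetilde Z_n] \to \mathbb{E}[\widetilde Z]$ and $\mathbb{E}[\widetilde Z_n^2] \to \mathbb{E}[\widetilde Z^2]$, hence $\mathrm{Var}(\widetilde Z_n) \to \mathrm{Var}(\widetilde Z)$.

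The lower bound is then immediate from Remark~\ref{remark:theorem:AntiConcentration-OrderStatistics}, which holds for arbitrary random variables. For the upper bound, fix $t \in \mathbb{R}$, $\varepsilon > 0$ and $\delta > 0$. Since $\widetilde Z_n \uparrow \widetilde Z$ a.s.\ one has $\{\widetilde Z \leq a\} = \bigcap_n\{\widetilde Z_n \leq a\}$ up to a null set, so $\mathbb{P}\{\widetilde Z \leq a\} = \lim_n \mathbb{P}\{\widetilde Z_n \leq a\}$ for every $a \in \mathbb{R}$, and therefore
\[
\mathbb{P}\{t \leq \widetilde Z \leq t+\varepsilon\} \leq \mathbb{P}\{t-\delta < \widetilde Z \leq t+\varepsilon\} = \lim_n \mathbb{P}\{t-\delta < \widetilde Z_n \leq t+\varepsilon\} \leq \limsup_n Q(\widetilde Z_n, \varepsilon+\delta).
\]
Bounding $Q(\widetilde Z_n, \varepsilon+\delta)$ by Corollary~\ref{corollary:theorem:AntiConcentration-OrderStatistics} and using $\mathrm{Var}(\widetilde Z_n) \to \mathrm{Var}(\widetilde Z)$ together with continuity of $x \mapsto (\varepsilon+\delta)/\sqrt{x+(\varepsilon+\delta)^2/12}$ gives $\mathbb{P}\{t \leq \widetilde Z \leq t+\varepsilon\} \leq (\varepsilon+\delta)/\sqrt{\mathrm{Var}(\widetilde Z)+(\varepsilon+\delta)^2/12}$. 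Letting $\delta \downarrow 0$ and taking the supremum over $t$ yields $Q(\widetilde Z, \varepsilon) \leq \varepsilon/\sqrt{\mathrm{Var}(\widetilde Z)+\varepsilon^2/12}$, which is even sharper than the stated bound.

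I expect the integrability step to be the main obstacle: the entire argument hinges on $\widetilde Z \in L^2$, and deriving it from the bare assumption $\widetilde Z < \infty$ a.s.\ requires the non-trivial zero--one/integrability phenomenon for Gaussian suprema rather than any elementary estimate. The remaining point to watch is that the concentration function is not continuous under weak convergence, which is precisely why one widens the interval by $\delta$ before passing to the limit along $\widetilde Z_n$.
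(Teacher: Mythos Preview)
Your proposal is correct and follows the same overall strategy as the paper: approximate $\widetilde Z$ by finite-dimensional maxima $\widetilde Z_n$, apply Corollary~\ref{corollary:theorem:AntiConcentration-OrderStatistics}, and pass to the limit using $\mathrm{Var}(\widetilde Z_n)\to\mathrm{Var}(\widetilde Z)$.

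The technical implementation differs in two respects. First, the paper passes to the limit in the concentration function by invoking continuity of the distribution function of $\widetilde Z$ (citing \cite{gaenssler2007continuity}) to get uniform convergence of the CDFs and hence $Q(\widetilde Z_n,\varepsilon)\to Q(\widetilde Z,\varepsilon)$ directly; you instead use the $\delta$-widening trick, which is more elementary and bypasses the need for that external continuity result. Second, for $\mathrm{Var}(\widetilde Z_n)\to\mathrm{Var}(\widetilde Z)$ the paper appeals to a reverse Lyapunov inequality plus dominated convergence, whereas your monotone-convergence argument (after establishing $\widetilde Z\in L^2$ via Fernique/Borell--TIS) is again more direct. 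As you note, your route actually yields the sharper constant $1$ in place of $\sqrt{12}$ in the upper bound, matching the constant in Corollary~\ref{corollary:theorem:AntiConcentration-OrderStatistics} itself.
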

\begin{remark}\label{remark:theorem:AntiConcentration-SeparableProcess-1}
	The same inequalities hold for $Z = \sup_{u \in U} X_u$, where $X = \{X_u : u \in U\}$ is a separable (not necessarily centered) Gaussian process with non-degenerate one-dimensional marginals, i.e. $\inf_{u \in U} \mathrm{Var}(X_u) > 0$.
\end{remark}

\begin{remark}\label{remark:theorem:AntiConcentration-SeparableProcess-2}
	Note that $a \leq \mathrm{erf}(a)\sqrt{1 + a^2 } \leq \sqrt{2}a$ for all $a \geq 0$ and $\mathrm{erf}(a) = 2/\sqrt{\pi} \int_0^a e^{-t^2} dt$.  Therefore, if $Z \sim N(0, \sigma^2)$, then
	\begin{align*}
		\frac{\varepsilon/\sqrt{2}}{\sqrt{\sigma^2 + \varepsilon^2/2}} \leq Q(|Z|, \varepsilon) = \mathrm{erf}\left(\varepsilon/\sqrt{2 \sigma^2}\right) 
		\leq \frac{\varepsilon}{\sqrt{\sigma^2 + \varepsilon^2/2}}, \quad \varepsilon > 0.
	\end{align*}
	Thus, suprema of separable centered Gaussian processes have essentially the same anti-concentration properties as a single Gaussian random variable with mean zero and variance $\mathrm{Var}(\sup_{u \in U} |X_u|)$. This is reminiscent of (and yet different to) the classical large deviation and measure concentration results which imply that the tails of suprema of a separable centered Gaussian process are asymptotically equivalent to a single Gaussian random variable with mean zero and variance  $\sup_{u \in U}\mathrm{Var}(X_u)$. 
\end{remark}

\begin{proof}[Proof of Theorem~\ref{theorem:AntiConcentration-SeparableProcess}]
	We only need to demonstrate the upper bound, the lower bound follows from Remark~\ref{remark:theorem:AntiConcentration-OrderStatistics}. Without loss of generality, we may assume that $\mathrm{Var}(\widetilde{Z}) > 0$. If not, the upper bound is trivial.
	Since $X$ is separable, there exists a sequence of finite sets $U_n \subseteq U_0$ such that $\widetilde{Z}_n := \max_{u \in U_n} |X_u| \rightarrow \sup_{u \in U_0}|X_u| =  \sup_{u \in U}|X_u| =: \widetilde{Z}$ almost surely as $n \rightarrow \infty$; hence $\widetilde{Z}_n \rightarrow \widetilde{Z}$ weakly.
	Since $\mathrm{Var}(\widetilde{Z}) > 0$, the supremum $\widetilde{Z}_n$ is not degenerate at 0 and the distribution function $ t \mapsto \mathbb{P}\{ \widetilde{Z} \leq t\} $ is continuous for all $ t\geq 0$~\citep[][]{gaenssler2007continuity}. Therefore, by the reverse triangle inequality, for all $\varepsilon > 0$,
	\begin{align}\label{eq:theorem:AntiConcentration-SeparableProcess-1}
		\left| Q(\widetilde{Z}_n, \varepsilon) - Q(\widetilde{Z}, \varepsilon) \right|\leq 2 \sup_{t \geq 0} \left| \mathbb{P}\{ \widetilde{Z}_n \leq t\} - \mathbb{P}\{ \widetilde{Z} \leq t\} \right| \rightarrow 0 \quad{} \mathrm{as} \quad{} n \rightarrow \infty.
	\end{align}	 
	Moreover, by the reverse Lyapunov inequality for suprema of Gaussian processes 
	and the dominated convergence theorem, $\mathrm{Var}(\widetilde{Z}_n) \rightarrow \mathrm{Var}(\widetilde{Z})$ as $n \rightarrow \infty$. Hence, by Corollary~\ref{corollary:theorem:AntiConcentration-OrderStatistics}, for all $\varepsilon > 0$,
	\begin{align}\label{eq:theorem:AntiConcentration-SeparableProcess-2}
		\lim_{n \rightarrow \infty} Q(\widetilde{Z}_n, \varepsilon) \leq \lim_{n \rightarrow \infty}	\frac{\varepsilon\sqrt{12}}{ \sqrt{\mathrm{Var}(\widetilde{Z}_n) + \varepsilon^2/ 12}} = 	\frac{\varepsilon\sqrt{12}}{ \sqrt{\mathrm{Var}(\widetilde{Z}) + \varepsilon^2/ 12}}.
	\end{align}
	To conclude the proof, combine~\eqref{eq:theorem:AntiConcentration-SeparableProcess-1} and~\eqref{eq:theorem:AntiConcentration-SeparableProcess-2}.
\end{proof}

There exists an equivalent formulation of Theorem~\ref{theorem:AntiConcentration-SeparableProcess} in terms of the mode of the density of $\widetilde{Z}$:
\begin{corollary}\label{corollary:AntiConcentration-SeparableProcess-MaxDensity}
	Recall assumptions and notation from Theorem~\ref{theorem:AntiConcentration-SeparableProcess}. Set $M(\widetilde{Z}) = \esssup_{z \in \mathbb{R}} f_{\widetilde{Z}}(z)$, where $f_{\widetilde{Z}}(z)$ is the density of $\widetilde{Z}$ w.r.t. the Lebesgue measure on the real line. For all $\varepsilon > 0$,
	\begin{align*}
		\frac{\varepsilon M(\widetilde{Z})}{\sqrt{ 12^2 + \varepsilon^2M^2(\widetilde{Z})}} \leq  Q(\widetilde{Z}, \varepsilon) \leq \frac{12 \varepsilon M(\widetilde{Z})}{ \sqrt{1 + \varepsilon^2M^2(\widetilde{Z})}}.
	\end{align*}
\end{corollary}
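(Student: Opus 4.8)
The plan is to deduce Corollary~\ref{corollary:AntiConcentration-SeparableProcess-MaxDensity} from Theorem~\ref{theorem:AntiConcentration-SeparableProcess} alone, by showing that $M(\widetilde{Z})$ is, up to the universal constant $12$, the reciprocal of $\sqrt{\mathrm{Var}(\widetilde{Z})}$. Since the corollary presupposes that $\widetilde{Z}$ admits a Lebesgue density $f_{\widetilde{Z}}$, we automatically have $\mathrm{Var}(\widetilde{Z}) \in (0,\infty)$, so every quantity below is well defined; write $V := \mathrm{Var}(\widetilde{Z})$ and $M := M(\widetilde{Z})$.

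The one technical ingredient is the identity $M = \lim_{\varepsilon \downarrow 0} \varepsilon^{-1} Q(\widetilde{Z},\varepsilon)$, which holds for any random variable with a Lebesgue density. The bound $\varepsilon^{-1}Q(\widetilde{Z},\varepsilon) \le M$ is immediate, since $\int_t^{t+\varepsilon} f_{\widetilde{Z}}(z)\,dz \le \varepsilon\, \esssup f_{\widetilde{Z}}$ for every $t$. For the reverse inequality, fix $\delta > 0$: because the Lebesgue points of $f_{\widetilde{Z}}$ have full measure while $\{f_{\widetilde{Z}} > M - \delta\}$ has positive measure, there is a Lebesgue point $t_0$ of $f_{\widetilde{Z}}$ with $f_{\widetilde{Z}}(t_0) \ge M - \delta$, and at such a point $\varepsilon^{-1}\int_{t_0}^{t_0+\varepsilon} f_{\widetilde{Z}}(z)\,dz \to f_{\widetilde{Z}}(t_0)$ as $\varepsilon \downarrow 0$ by the Lebesgue differentiation theorem; hence $\liminf_{\varepsilon \downarrow 0}\varepsilon^{-1}Q(\widetilde{Z},\varepsilon) \ge M - \delta$, and letting $\delta \downarrow 0$ gives the identity. (Run with $M = \infty$, the same argument would force $\varepsilon^{-1}Q(\widetilde{Z},\varepsilon) \to \infty$, contradicting the upper bound of Theorem~\ref{theorem:AntiConcentration-SeparableProcess}; so $M < \infty$ is automatic.)

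Next I would divide the two inequalities of Theorem~\ref{theorem:AntiConcentration-SeparableProcess} by $\varepsilon$ and send $\varepsilon \downarrow 0$. The upper inequality gives $M = \lim_{\varepsilon\downarrow0}\varepsilon^{-1}Q(\widetilde{Z},\varepsilon) \le \lim_{\varepsilon\downarrow0} \sqrt{12}/\sqrt{V + \varepsilon^2/12} = \sqrt{12/V}$, i.e.\ $V \le 12/M^2$; the lower inequality gives, in the same way, $V \ge 1/(12 M^2)$. (This last bound is Hensley's variance--mode inequality specialized to $\widetilde{Z}$, so it could instead be quoted; extracting both comparisons from the theorem keeps the argument self-contained.) In short, $\tfrac{1}{12} \le M^2 V \le 12$.

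Finally I would substitute these two bounds back into Theorem~\ref{theorem:AntiConcentration-SeparableProcess}, using that both $t \mapsto \varepsilon\sqrt{12}/\sqrt{t+\varepsilon^2/12}$ and $t \mapsto (\varepsilon/\sqrt{12})/\sqrt{t+\varepsilon^2/12}$ are decreasing in $t$. With $V \ge 1/(12M^2)$ the upper bound becomes $Q(\widetilde{Z},\varepsilon) \le \varepsilon\sqrt{12}/\sqrt{1/(12M^2) + \varepsilon^2/12} = 12\varepsilon M/\sqrt{1 + \varepsilon^2 M^2}$, and with $V \le 12/M^2$ the lower bound becomes $Q(\widetilde{Z},\varepsilon) \ge (\varepsilon/\sqrt{12})/\sqrt{12/M^2 + \varepsilon^2/12} = \varepsilon M/\sqrt{144 + \varepsilon^2 M^2}$, which are exactly the two claimed inequalities, valid for every $\varepsilon>0$. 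The only step needing any care is the identity $M = \lim_{\varepsilon\downarrow0}\varepsilon^{-1}Q(\widetilde{Z},\varepsilon)$, which is a routine use of the Lebesgue differentiation theorem; the rest is monotonicity and algebra, so I anticipate no real obstacle.
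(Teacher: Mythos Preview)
Your argument is correct and takes a somewhat different route from the paper's. The paper's one-line proof invokes Proposition~\ref{proposition:S-Concavity-Affine-Invariance} (the variance--mode inequality $\tfrac{1}{12}\le M^2(\widetilde Z)\,\mathrm{Var}(\widetilde Z)\le 12$ for $s$-concave distributions, taking $s=-1/6$) and then substitutes these bounds into Theorem~\ref{theorem:AntiConcentration-SeparableProcess}, exactly as you do in your final step. You instead recover the very same two-sided bound $\tfrac{1}{12}\le M^2 V\le 12$ by dividing Theorem~\ref{theorem:AntiConcentration-SeparableProcess} through by $\varepsilon$, letting $\varepsilon\downarrow 0$, and using the identity $M=\lim_{\varepsilon\downarrow0}\varepsilon^{-1}Q(\widetilde Z,\varepsilon)$. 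This makes the corollary a purely formal consequence of Theorem~\ref{theorem:AntiConcentration-SeparableProcess} plus the Lebesgue differentiation theorem, without reopening the $s$-concavity machinery (and in particular without needing to argue that the infinite-dimensional supremum $\widetilde Z$ itself has an $s$-concave density). The paper's route, by contrast, makes explicit that the constants in the corollary are the same structural constants already produced by Proposition~\ref{proposition:S-Concavity-Affine-Invariance} in the proof of Theorem~\ref{theorem:AntiConcentration-OrderStatistics}, and avoids the small detour through the limit identity for $M$. One minor remark: your claim that $V<\infty$ follows ``automatically'' from $\widetilde Z$ having a density is not true for general random variables, but it does hold here by Borell--TIS under the standing assumption $\widetilde Z<\infty$ a.s.\ inherited from Theorem~\ref{theorem:AntiConcentration-SeparableProcess}.
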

\begin{remark}\label{remark:corollary:AntiConcentration-SeparableProcess-MaxDensity}
	The same inequalities hold for $Z = \sup_{u \in U} X_u$, where $X = \{X_u : u \in U\}$ is a separable (not necessarily centered) Gaussian process with non-degenerate one-dimensional marginals, i.e. $\inf_{u \in U} \mathrm{Var}(X_u) > 0$.
\end{remark}

\begin{proof}[Proof of Corollary~\ref{corollary:AntiConcentration-SeparableProcess-MaxDensity}]
	Combine Theorem~\ref{theorem:AntiConcentration-SeparableProcess} and below Proposition~\ref{proposition:S-Concavity-Affine-Invariance} with $s = -1/6$ (see Section~\ref{subsec:Proof-AntiConcentration-OrderStatistics}).
\end{proof}

From a purely mathematical perspective the two-sided inequalities on the concentration functions in Corollary~\ref{corollary:theorem:AntiConcentration-OrderStatistics} and Theorem~\ref{theorem:AntiConcentration-SeparableProcess} are quite satisfying since the, up to a multiplicative constant, matching lower and upper bounds imply that they are asymptotically optimal as $\varepsilon \rightarrow 0$. However, from a statistical perspective these results are only useful, if we can provide tight bounds on the variances $\mathrm{Var}(\max_{1 \leq i \leq n} |X_i|)$ and $\mathrm{Var}(\sup_{u \in U} |X_u|)$, respectively. We address this issue in Section~\ref{subsec:BoundsVariance}.

\section{Recovering and improving results from the literature}\label{subsec:Literature}
Our results from the preceding section can be used to recover and improve the Gaussian anti-concentration inequalities by~\cite{chernozhukov2014AntiConfidenceBands, chernozhukov2015ComparisonAnti, chernozhukov2017Nazarov} and~\cite{deng2020beyond}, which are the most commonly used anti-concentration inequalities in high-dimensional statistics. We do not discuss anti-concentration inequalities derived from small ball probabilities~\cite[e.g.,][]{rudelson2008littlewood, rudelson2009smallest, rudelson2014small, goetze2014ExplicitRates, goetze2019LargeBall}, since these authors use a slightly different definition of a concentration function that only applies to finite-dimensional Gaussian random fields. For a comprehensive review of the literature on (Gaussian) anti-concentration we refer to~\cite{chernozhukov2015ComparisonAnti}. The main lesson from this section is that the anti-concentration inequalities by~\cite{chernozhukov2014AntiConfidenceBands, chernozhukov2015ComparisonAnti, chernozhukov2017Nazarov} and~\cite{deng2020beyond} fail to accurately capture the effect of the correlation between the one-dimensional marginals of the Gaussian process. This typically results in conservative (or even vacuous) upper bounds on the concentration function when compared to our new inequalities.

For $n$-dimensional Gaussian random fields Corollary~\ref{corollary:AntiConcentration-SeparableProcess-MaxDensity} readily implies the following refined version of Theorem 10 by~\cite{deng2020beyond}: 

\begin{corollary}\label{corollary:AntiConcentration-CountableProcess}
	Let $X = \{X_i : 1 \leq i \leq n\}$ be a Gaussian random field with $\mathrm{Var}(X_i) = \sigma_i^2$ for all $1 \leq i \leq n$ and $\sigma_{(1)}^2 \leq \ldots \leq \sigma_{(n)}^2$ be the ordered values of $\sigma_1^2, \ldots, \sigma_n^2$. Set $Z_n = \max_{1 \leq i \leq n} x_i$. Then, for all $\varepsilon > 0$,
	\begin{align*}
	\frac{\varepsilon/\sqrt{12}}{ \sqrt{\sigma_{(n)}^2 + \varepsilon^2/ 12}} \leq Q(Z_n, \varepsilon) \leq \frac{  12 \left(2 + \sqrt{2\log n}\right) \varepsilon}{\sqrt{\bar{\sigma}_n^2 +\left(2 + \sqrt{2\log n}\right)^2  \varepsilon^2} },
	\end{align*}
	where $\bar{\sigma}_n = \left(2 + \sqrt{2\log n}\right) / \left(1/ \sigma_{(1)} + \max_{1 \leq k \leq n} \left(1 + \sqrt{2\log k}\right)/\sigma_{(k)}\right) \geq \sigma_{(1)}$.
\end{corollary}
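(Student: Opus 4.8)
The plan is to prove the two inequalities separately: the lower bound from the universal lower bound of Remark~\ref{remark:theorem:AntiConcentration-OrderStatistics} together with the classical variance estimate for a Gaussian supremum, and the upper bound by feeding a bound on the mode of $Z_n=\max_{1\le i\le n}X_i$ into the density form of the anti-concentration inequality (Corollary~\ref{corollary:AntiConcentration-SeparableProcess-MaxDensity} via Remark~\ref{remark:corollary:AntiConcentration-SeparableProcess-MaxDensity}). Write $c_n:=2+\sqrt{2\log n}$, so that $\bar\sigma_n=c_n\big/\bigl(1/\sigma_{(1)}+\max_{1\le k\le n}(1+\sqrt{2\log k})/\sigma_{(k)}\bigr)$ and the asserted upper bound equals $12c_n\varepsilon/\sqrt{\bar\sigma_n^2+c_n^2\varepsilon^2}$. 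First I would clear the degenerate cases: if $\sigma_{(1)}=0$ then $\bar\sigma_n=0$ by the defining formula, the right-hand side of the upper bound equals $12\ge1\ge Q(Z_n,\varepsilon)$ and the bound is trivial, as is $\bar\sigma_n\ge\sigma_{(1)}=0$; so for the upper bound I may assume $\sigma_{(i)}>0$ for all $i$. Then $\{X_i:1\le i\le n\}$ has non‑degenerate one‑dimensional marginals and, $Z_n$ being a supremum over a finite (hence separable) index set, Remark~\ref{remark:corollary:AntiConcentration-SeparableProcess-MaxDensity} applies and gives
\[
    Q(Z_n,\varepsilon)\;\le\;\frac{12\,\varepsilon\,M(Z_n)}{\sqrt{1+\varepsilon^2M^2(Z_n)}},\qquad M(Z_n):=\esssup_{z\in\mathbb{R}}f_{Z_n}(z).
\]

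The quantitative core is then the density bound
\[
    M(Z_n)\;\le\;\frac{1}{\sigma_{(1)}}+\max_{1\le k\le n}\frac{1+\sqrt{2\log k}}{\sigma_{(k)}}\;=\;\frac{c_n}{\bar\sigma_n},
\]
which is (the density form of) Theorem~10 of~\cite{deng2020beyond}. Since $m\mapsto12\varepsilon m/\sqrt{1+\varepsilon^2m^2}$ is strictly increasing on $[0,\infty)$, substituting this bound into the previous display and clearing denominators yields $Q(Z_n,\varepsilon)\le12c_n\varepsilon/\sqrt{\bar\sigma_n^2+c_n^2\varepsilon^2}$, the claimed upper bound. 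For the lower bound I would apply Remark~\ref{remark:theorem:AntiConcentration-OrderStatistics} to the single random variable $Z_n$, giving $Q(Z_n,\varepsilon)\ge(\varepsilon/\sqrt{12})/\sqrt{\mathrm{Var}(Z_n)+\varepsilon^2/12}$, and then use $\mathrm{Var}(Z_n)=\mathrm{Var}(\max_iX_i)\le\max_i\mathrm{Var}(X_i)=\sigma_{(n)}^2$ (a one‑line consequence of the Gaussian Poincar\'e inequality: writing $X_i=\langle a_i,g\rangle$ with $g$ standard Gaussian, $x\mapsto\max_i\langle a_i,x\rangle$ is $\sigma_{(n)}$‑Lipschitz); since $v\mapsto(\varepsilon/\sqrt{12})/\sqrt{v+\varepsilon^2/12}$ is decreasing, this gives the stated lower bound, and this step needs no non‑degeneracy. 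Finally, $\bar\sigma_n\ge\sigma_{(1)}$ is the elementary estimate $\sigma_{(1)}\bigl(1/\sigma_{(1)}+\max_k(1+\sqrt{2\log k})/\sigma_{(k)}\bigr)=1+\max_k\sigma_{(1)}(1+\sqrt{2\log k})/\sigma_{(k)}\le1+\max_k(1+\sqrt{2\log k})=c_n$, using $\sigma_{(1)}\le\sigma_{(k)}$.

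The only nonroutine ingredient is the density bound $M(Z_n)\le1/\sigma_{(1)}+\max_k(1+\sqrt{2\log k})/\sigma_{(k)}$; everything else is monotonicity and bookkeeping. This is exactly Theorem~10 of~\cite{deng2020beyond}, so I would simply cite it; a self‑contained derivation would start from the identity $f_{Z_n}(t)=\sum_{k}f_{X_k}(t)\,\mathbb{P}\bigl(X_j\le t\ \forall j\ne k\mid X_k=t\bigr)$, split the sum according to the size of $\sigma_{(k)}$ relative to $t$, and use $\phi(t/\sigma_{(k)})\le\phi(0)$ together with Gaussian tail bounds on the conditional probabilities (which decay once $t$ is large relative to $\sigma_{(k)}$, as all remaining $n-1$ coordinates must stay below $t$) to see that only $O(1)$ summands matter at any level $t$; carrying this out with the threshold $1+\sqrt{2\log k}$ is the delicate point, and I expect it to be the main obstacle to a fully self‑contained proof.
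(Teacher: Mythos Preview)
Your proposal is correct and follows essentially the same route as the paper: the lower bound via the universal variance-based lower bound together with $\mathrm{Var}(Z_n)\le\sigma_{(n)}^2$, and the upper bound by converting Theorem~10 of \cite{deng2020beyond} into a bound on $M(Z_n)$ and feeding it into the density form of the anti-concentration inequality (Corollary~\ref{corollary:AntiConcentration-SeparableProcess-MaxDensity} via Remark~\ref{remark:corollary:AntiConcentration-SeparableProcess-MaxDensity}) using monotonicity of $m\mapsto 12\varepsilon m/\sqrt{1+\varepsilon^2 m^2}$. Your treatment of the degenerate case $\sigma_{(1)}=0$ and the explicit verification of $\bar\sigma_n\ge\sigma_{(1)}$ are small additions the paper leaves implicit.
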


\begin{proof}[Proof of Corollary~\ref{corollary:AntiConcentration-CountableProcess}]
	The lower bound on the concentration function follows from Theorem~\ref{theorem:AntiConcentration-OrderStatistics} and the well-known fact that $\mathrm{Var}(Z_n) \leq \sigma_{(n)}^2$ (see also Proposition~\ref{theorem:UpperBoundVariance-SeparableProcess}). Next, consider the upper bound on the concentration function. By Theorem 10 in~\cite{deng2020beyond} for all $\varepsilon > 0$ and all $t \in \mathbb{R}$, we have
	\begin{align*}
		\varepsilon^{-1} \mathbb{P}\left\{ t \leq  \max_{1 \leq i \leq n} X_i \leq t + \varepsilon  \right\} \leq 1/ \sigma_{(1)} + \max_{1 \leq k \leq n} \left(1 + \sqrt{2\log k}\right)/\sigma_{(k)}.
	\end{align*}
	Take $\varepsilon \downarrow 0$ to conclude that
	\begin{align*}
		M(Z_n) \leq 1/ \sigma_{(1)} + \max_{1 \leq k \leq n} \left(1 + \sqrt{2\log k}\right)/\sigma_{(k)}.
	\end{align*}
	Notice that the map $x \mapsto 12 x/\sqrt{1 + x^2}$ is increasing in $x$. Hence, the claim follows from Corollary~\ref{corollary:AntiConcentration-SeparableProcess-MaxDensity}, Remark~\ref{remark:corollary:AntiConcentration-SeparableProcess-MaxDensity}, and above upper bound on $M(Z_n)$.
\end{proof}

From Corollary~\ref{corollary:AntiConcentration-CountableProcess} we deduce a slightly refined version of Nazorov's inequality in~\cite{chernozhukov2017Nazarov}:
\begin{corollary}\label{corollary:AntiConcentration-CountableProcess-2}
	Recall the notation and setup of Corollary~\ref{corollary:AntiConcentration-CountableProcess}. For all $x \in \mathbb{R}^n$ and $\varepsilon > 0$,
	\begin{align*}
		\mathbb{P}\left\{ x \leq X \leq x + \varepsilon \mathbf{1}  \right\} \leq \frac{  12 \left(2 + \sqrt{2\log n}\right) \varepsilon}{\sqrt{\bar{\sigma}_n^2 +\left(2 + \sqrt{2\log n}\right)^2  \varepsilon^2} },
	\end{align*}
	where $\mathbf{1}=(1, \ldots, 1)' \in \mathbb{R}^n$ and the vector-valued inequality $x \leq X \leq x + \varepsilon \mathbf{1}$ is understood element-wise.
\end{corollary}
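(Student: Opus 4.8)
The plan is to reduce the box probability to the L\'evy concentration function of the coordinate-wise maximum $Z_n = \max_{1 \le i \le n} X_i$ and then invoke the upper bound already established in Corollary~\ref{corollary:AntiConcentration-CountableProcess}.

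First I would fix $x \in \mathbb{R}^n$ and $\varepsilon > 0$, set $m = \max_{1 \le i \le n} x_i$, and pick an index $i^\star$ with $x_{i^\star} = m$. On the event $\{x \le X \le x + \varepsilon \mathbf{1}\}$ one has $x_i \le X_i \le x_i + \varepsilon$ for every $i$; in particular $X_{i^\star} \ge x_{i^\star} = m$, hence $Z_n \ge m$, while $X_i \le x_i + \varepsilon \le m + \varepsilon$ for all $i$, hence $Z_n \le m + \varepsilon$. Therefore
\[
	\{x \le X \le x + \varepsilon \mathbf{1}\} \subseteq \{m \le Z_n \le m + \varepsilon\},
\]
so that, directly from the definition of the concentration function,
\[
	\mathbb{P}\left\{ x \le X \le x + \varepsilon \mathbf{1} \right\} \le \mathbb{P}\left\{ m \le Z_n \le m + \varepsilon \right\} \le Q(Z_n, \varepsilon).
\]

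Second, I would apply the upper bound of Corollary~\ref{corollary:AntiConcentration-CountableProcess} to $Q(Z_n, \varepsilon)$, which is precisely the right-hand side of the claim, completing the argument. The only point needing a brief word is the degenerate case: if $\sigma_{(1)} = 0$ then $\bar\sigma_n = 0$, the right-hand side equals $12$, and the inequality is trivial; otherwise $\sigma_{(1)} > 0$ and Corollary~\ref{corollary:AntiConcentration-CountableProcess} applies verbatim. I expect no real obstacle here — the entire content is the event inclusion in the first step, and everything else is an immediate consequence of the preceding corollary.
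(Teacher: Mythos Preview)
Your proposal is correct and follows essentially the same route as the paper. The paper's proof simply asserts that $\sup_{t\in\mathbb{R}}\mathbb{P}\{t\le Z_n\le t+\varepsilon\}$ is ``clearly larger'' than $\sup_{x\in\mathbb{R}^n}\mathbb{P}\{x\le X\le x+\varepsilon\mathbf{1}\}$ and then applies Corollary~\ref{corollary:AntiConcentration-CountableProcess}; you spell out the event inclusion explicitly via $m=\max_i x_i$, which is exactly the content of that ``clearly''.
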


\begin{proof}[Proof of Corollary~\ref{corollary:AntiConcentration-CountableProcess-2}]
	By Corollary~\ref{corollary:AntiConcentration-CountableProcess}, for all $\varepsilon > 0$,
	\begin{align*}
		\sup_{t \in \mathbb{R}} \mathbb{P}\left\{ t \leq  \max_{1 \leq i \leq n} X_i \leq t + \varepsilon  \right\} \leq \frac{  12 \left(2 + \sqrt{2\log n}\right) \varepsilon}{\sqrt{\bar{\sigma}_n^2 +\left(2 + \sqrt{2\log n}\right)^2  \varepsilon^2} }.
	\end{align*}
	The probability on the left-hand side in above display is clearly larger than
	\begin{align*}
		\sup_{x \in \mathbb{R}^n} \mathbb{P}\left\{ \forall i \in [n] : x_i \leq  X_i \leq x_i + \varepsilon  \right\} = \sup_{x \in \mathbb{R}^n} \mathbb{P}\left\{ x \leq X \leq x + \varepsilon \mathbf{1}  \right\}.
	\end{align*}
\end{proof}

The upper bounds in Corollaries~\ref{corollary:AntiConcentration-CountableProcess} and~\ref{corollary:AntiConcentration-CountableProcess-2} both depend explicitly on the dimension $n$ of the Gaussian random field and become trivial as $n \rightarrow \infty$. It is, therefore, intuitively obvious that these bounds must be incorrect. The following two cases illustrate the source of the problem with these bounds. First, suppose that the one-dimensional marginals $X_1, \ldots, X_n$ are perfectly correlated. In this case, $\max_{1 \leq i \leq n}X_i \overset{d}{=} X_1$ and by Remark~\ref{remark:theorem:AntiConcentration-SeparableProcess-2} the upper bound on the concentration function in Corollary~\ref{corollary:AntiConcentration-CountableProcess} should be proportional to $\varepsilon/(\sigma_1^2 + \varepsilon^2/2)^{1/2}$ and independent of $n$. Second, suppose that the one-dimensional marginals $X_1, \ldots, X_n$ are uncorrelated. In this case, $\sup_{x \in \mathbb{R}^n}\mathbb{P}\left\{ x \leq X \leq x + \varepsilon \mathbf{1}  \right\} =  \left(\sup_{t \in \mathbb{R}} \mathbb{P}\left\{ t \leq X_1 \leq t + \varepsilon \right\} \right)^n \leq ( \varepsilon^2/(\sigma_1^2 + \varepsilon^2/2))^{n/2}$. Hence, the upper bound on the concentration function in Corollary~\ref{corollary:AntiConcentration-CountableProcess-2} is too pessimistic. In fact, the probability in Corollary~\ref{corollary:AntiConcentration-CountableProcess-2} is better interpreted as a small ball probability rather than a concentration function for the maximum of a Gaussian random field. Therefore, the exponentially fast decay in the dimension should not surpris~\citep{kuelbs1993metric, li1999approximation}.

Another, slightly less trivial example that showcases how our results improve the upper bound of  Corollary~\ref{corollary:AntiConcentration-CountableProcess} is an equicorrelated centered Gaussian random field. We call a centered Gaussian random field \emph{equicorrelated} if its one-dimensional marginals satisfy $X_i \sim N(0,1)$ and $\mathrm{Cov}(X_k, X_j) = \rho \in \left(-\frac{1}{n-1}, 1\right)$ for all $1 \leq i, k\neq j \leq n$. As usual, we set $Z_n = \max_{1 \leq i \leq n} X_i$. By~\cite{tanguy2017quelques} (Proposition 4.1.1) there exists an absolute constant $C_0 > 0$ such that for all $\rho \in [0,1)$,
\begin{align*}
	\rho \leq \mathrm{Var}(Z_n) \leq \frac{C_0}{\log n} + \rho.
\end{align*}
Thus, by Theorem~\ref{theorem:AntiConcentration-SeparableProcess} and Remark~\ref{remark:theorem:AntiConcentration-SeparableProcess-1} there exists $C_1 > 0$ such that for all $\rho \in [0,1)$ and $\varepsilon > 0$,
\begin{align}\label{eq:Literature-1}
	\frac{\varepsilon/C_1}{ \sqrt{\rho + (\log n)^{-1} + \varepsilon^2}} \leq Q(Z_n, \varepsilon) \leq \frac{C_1\varepsilon}{ \sqrt{\rho  + \varepsilon^2}}.
\end{align}
If $0 < \rho = O(1)$, above two-sided bound is a significant improvement over the bound in Corollary~\ref{corollary:AntiConcentration-CountableProcess}. Whereas, if $\rho = O(1/\log n)$, then above two-sided bounds and the bound in Corollary~\ref{corollary:AntiConcentration-CountableProcess} are of the same order. Thus, Corollary~\ref{corollary:AntiConcentration-CountableProcess} fails to capture the correlation structure between the one-dimensional marginals of $X$ and treats them as if they were (asymptotically) independent.

Next, for (infinite-dimensional) separable centered Gaussian processes Theorem~\ref{theorem:AntiConcentration-SeparableProcess} yields a version of Theorem 2.1 by~\cite{chernozhukov2014AntiConfidenceBands}:

\begin{corollary}\label{corollary:AntiConcentration-SeparableProcess}
	Let $X = \{X_u : u \in U\}$ be a separable centered Gaussian process such that $\inf_{u \in U} \mathrm{Var}(X_u) \geq \underline{\sigma}^2 > 0$. Set $Z = \sup_{u \in U} X_u$ and assume that $Z < \infty$ a.s. Then, $ 0 \leq \mathbb{E}[Z] < \infty$ and, for all $\varepsilon > 0$,
	\begin{align*}
		Q(Z, \varepsilon) \leq \frac{15\sqrt{12}  (\mathbb{E}[Z/\underline{\sigma}] +1) \varepsilon}{ \sqrt{\underline{\sigma}^2 + 15 (\mathbb{E}[Z/\underline{\sigma}]+1)^2  \varepsilon^2}}.
	\end{align*}
\end{corollary}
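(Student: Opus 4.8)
The plan is to deduce the corollary from the variance form of Theorem~\ref{theorem:AntiConcentration-SeparableProcess} together with a matching lower bound on $\mathrm{Var}(Z)$. Since $X$ is centered and $\inf_{u\in U}\mathrm{Var}(X_u)\ge\underline{\sigma}^2>0$, Remark~\ref{remark:theorem:AntiConcentration-SeparableProcess-1} applies to $Z=\sup_{u\in U}X_u$ and gives
\[
	Q(Z,\varepsilon)\ \le\ \frac{\sqrt{12}\,\varepsilon}{\sqrt{\mathrm{Var}(Z)+\varepsilon^2/12}},\qquad\varepsilon>0
\]
(equivalently, one may run the argument through Corollary~\ref{corollary:AntiConcentration-SeparableProcess-MaxDensity} and Remark~\ref{remark:corollary:AntiConcentration-SeparableProcess-MaxDensity} in terms of $M(Z)$). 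Granting (i) $0\le\mathbb{E}[Z]<\infty$ and (ii) the lower bound $\mathrm{Var}(Z)\ge\underline{\sigma}^{2}\big/\big(225\,(\mathbb{E}[Z]/\underline{\sigma}+1)^{2}\big)$, the conclusion is immediate: substitute (ii), multiply numerator and denominator by $15(\mathbb{E}[Z]/\underline{\sigma}+1)$, and use $225/12\ge15$ to absorb the $\varepsilon^2$ term, which reproduces exactly the claimed inequality.

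Claim (i) is standard. Nonnegativity: $Z\ge X_{u_0}$ for any fixed $u_0\in U$ and $Z^{-}\le|X_{u_0}|\in L^1$, so $\mathbb{E}[Z]\ge\mathbb{E}[X_{u_0}]=0$. Finiteness: an a.s.\ finite supremum of a separable centered Gaussian process is integrable~\citep{landau1970supremum}; concretely, with the finite sets $U_n\subseteq U_0$ and $Z_n=\max_{u\in U_n}X_u\uparrow Z$ from the proof of Theorem~\ref{theorem:AntiConcentration-SeparableProcess}, one first notes $\sup_{u\in U}\mathrm{Var}(X_u)<\infty$ (pick $t_0$ with $\mathbb{P}\{Z\ge t_0\}<1/4$ and use $\mathbb{P}\{X_u\ge t_0\}\le\mathbb{P}\{Z\ge t_0\}$), whereupon Gaussian concentration bounds $\mathbb{E}[Z_n]$ by $\mathrm{med}(Z_n)$ plus a fixed multiple of $\sup_u\mathrm{Var}(X_u)^{1/2}$, uniformly in $n$, and monotone/dominated convergence yield $\mathbb{E}[Z]=\lim_n\mathbb{E}[Z_n]<\infty$.

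Claim (ii) is the crux. One cannot get it from $\mathrm{Var}(Z)>0$ alone, nor from the mere concavity of $\Phi^{-1}\circ F_Z$: writing $Z=h(N)$ with $N\sim N(0,1)$ and $h=(\Phi^{-1}\circ F_Z)^{-1}$ convex increasing, the constraints ``$h$ convex, $h(t)\ge\underline{\sigma}t$, $h(0)\le\mathbb{E}[Z]$'' are met by $h(t)=\max(\underline{\sigma}t,\,\mathbb{E}[Z])$, which would make $\mathrm{Var}(Z)$ exponentially smaller than the target --- but such an $h$ forces a degenerate coordinate and is ruled out by the per-index hypothesis $\mathrm{Var}(X_u)\ge\underline{\sigma}^2$. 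The hypothesis must therefore be used through the joint Gaussian structure. The route I would take: fix $u_0$ (nearly) attaining $\underline{\sigma}$, decompose the Gaussian Hilbert space of $X$ as $\mathbb{R}\,\xi\oplus\xi^{\perp}$ with $\xi=X_{u_0}/\sigma_{u_0}\sim N(0,1)$, write $X_u=\rho_u\sigma_u\,\xi+Y_u$ with $Y_u\perp\xi$, and bound $\mathrm{Var}(Z)\ge\mathbb{E}\big[\mathrm{Var}(Z\mid\{Y_u\})\big]$. Conditionally on $\{Y_u\}$, $Z=\sup_u(\rho_u\sigma_u\,\xi+Y_u)=:\psi(\xi)$ is a convex piecewise-linear function of the single standard normal $\xi$, and since $\rho_{u_0}=1$ and $Y_{u_0}=0$ its graph contains the ray $\xi\mapsto\sigma_{u_0}\xi$; the interval $I$ on which this piece is active carries Gaussian mass that, were it too small or too far in the tail, would force $\psi$ to be steeper than $\sigma_{u_0}$ on the bulk and hence $\mathbb{E}[\psi(\xi)]$ --- and with it $\mathbb{E}[Z]$ --- to be large. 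Quantifying this trade-off and combining it with $\mathrm{Var}(\psi(\xi)\mid\{Y_u\})\ge\sigma_{u_0}^2\,(\text{spread of }\xi\text{ on }I)$ gives a bound of the form $\mathrm{Var}(Z)\gtrsim\underline{\sigma}^{4}/(\mathbb{E}[Z]+\underline{\sigma})^{2}$ with an explicit constant; this is precisely the type of estimate developed in Section~\ref{subsec:BoundsVariance}.

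The main obstacle is claim (ii): extracting the variance lower bound with the correct powers of $\underline{\sigma}$ and $\mathbb{E}[Z]+\underline{\sigma}$ and a clean enough absolute constant (one needs $\mathrm{Var}(Z)\ge\underline{\sigma}^2/(C(\mathbb{E}[Z]/\underline{\sigma}+1)^2)$ for some $C\le225$), genuinely using the per-index nondegeneracy rather than only $\mathrm{Var}(Z)>0$. Given (i) and (ii), the remaining algebra in the first paragraph is routine.
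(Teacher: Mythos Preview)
Your overall framework is exactly the paper's: apply Remark~\ref{remark:theorem:AntiConcentration-SeparableProcess-1}, feed in the variance lower bound of Proposition~\ref{theorem:LowerBoundVariance-SeparableProcess}, and simplify. Claim~(i) and the algebra in your first paragraph are fine.

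The gap is in your sketched route to claim~(ii). The inequality $\mathrm{Var}(Z)\ge\mathbb{E}[\mathrm{Var}(Z\mid\{Y_u\})]$ can lose far more than a constant. Take $X_i=\underline{\sigma}\,\xi_i$, $1\le i\le n$, with $\xi_i$ i.i.d.\ standard normal and $u_0=1$. Then $\rho_u=0$ for every $u\ne u_0$, so conditionally on $\{Y_u\}$ one has $\psi(\xi)=\underline{\sigma}\max(\xi,M)$ with $M=\max_{i\ge2}\xi_i$. The $u_0$-piece is active only on $\{\xi>M\}$, and on the bulk $\{\xi<M\}$ the slope of $\psi$ is \emph{zero}, not ``steeper than $\sigma_{u_0}$''; the largeness of $\mathbb{E}[Z]$ here comes from the intercept $M$, not from any slope. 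A direct computation gives $\mathrm{Var}(\psi(\xi)\mid M)$ of order $\underline{\sigma}^{2}\phi(M)/M^{3}$, hence $\mathbb{E}[\mathrm{Var}(Z\mid\{Y_u\})]$ is of order $\underline{\sigma}^{2}/\big(n(\log n)^{3/2}\big)$, whereas the target $\underline{\sigma}^{2}/(1+\mathbb{E}[Z/\underline{\sigma}])^{2}$ and the true variance are both of order $\underline{\sigma}^{2}/\log n$. The shortfall is a full factor of~$n$, so no choice of constants repairs the argument.

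The paper's Proposition~\ref{theorem:LowerBoundVariance-SeparableProcess} (which is what Section~\ref{subsec:BoundsVariance} actually does) avoids conditioning on a single Gaussian direction entirely. It introduces an \emph{independent copy} $X'$ of $X$ and, via Lemma~\ref{lemma:ding2015multiple}, converts a Chebyshev bound on the interpolated process into an upper bound on $\mathbb{P}\{\sup_{u\in U_t}X'_u/\underline{\sigma}\ge 1\}$ in terms of $\mathrm{Var}(Z)$, where $U_t=\{u:X_u\ge t\,\mathbb{E}[Z]\}$; the per-index hypothesis $\mathrm{Var}(X_u)\ge\underline{\sigma}^{2}$ enters only through the Mills-ratio lower bound $\inf_u\mathbb{P}\{X'_u/\underline{\sigma}\ge1\}\ge 1/9$. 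That two-copy device, not the one-direction decomposition, is what delivers the correct $(1+\mathbb{E}[Z/\underline{\sigma}])^{-2}$ rate with the constant $1/15^{2}$.
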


\begin{proof}[Proof of Corollary~\ref{corollary:AntiConcentration-SeparableProcess}]
	By Proposition~\ref{theorem:LowerBoundVariance-SeparableProcess} (see Section~\ref{subsec:BoundsVariance}) we have
	\begin{align*}
		\mathrm{Var}(Z/\underline{\sigma}) \geq \frac{1}{15^2} \left(\frac{1}{1 + \mathbb{E}[Z/\underline{\sigma}]}\right)^2,
	\end{align*}
	and, hence, by Theorem~\ref{theorem:AntiConcentration-SeparableProcess} and Remark~\ref{remark:theorem:AntiConcentration-SeparableProcess-1}, for all $\varepsilon > 0$,
	\begin{align*}
		Q(Z, \varepsilon) = Q(Z/\underline{\sigma}, \varepsilon/\underline{\sigma}) \leq \frac{15\sqrt{12}  (\mathbb{E}[Z/\underline{\sigma}] +1) \varepsilon}{ \sqrt{\underline{\sigma}^2 + 15 (\mathbb{E}[Z/\underline{\sigma}]+1)^2  \varepsilon^2}}.
	\end{align*}
\end{proof}

Since the map $x \mapsto x/\sqrt{1 + x^2}$ is monotone increasing in $x > 0$, we can replace $\mathbb{E}[Z/\underline{\sigma}]$ with any upper bound. This is extremely convenient since there exist many well-established techniques to obtain (tight) upper bounds on the expected value of suprema of Gaussian processes.

\section{Lower and upper bounds on the variance of surprema of Gaussian processes}\label{subsec:BoundsVariance}
In applications of Theorem~\ref{theorem:AntiConcentration-SeparableProcess} to statistical problems it is often not enough to simply state bounds in terms of $\mathrm{Var}(\sup_{u \in U}|X_u|)$. For instance, in high-dimensional statistics we usually observe that $\mathrm{Var}(\sup_{u \in U}|X_u|) \rightarrow 0$ as the metric entropy or dimension of the index set $U$ increases~\citep[e.g.,][]{chatterjee2014Superconcentration,  biau2015HighDimPNorms, giessing2023bootstrap}. In such situations, it is crucial to understand at which rate the variance of the supremum collapses. Thus far, most results in the literature have focused on bounds on the variance of (smooth) functions of $n$-dimensional spherical Gaussian random fields~\citep[e.g.,][]{cacoullus1982upper, talagrand1994Russo, chatterjee2014Superconcentration, ding2015multiple, tanguy2015some, tanguy2017quelques}. In this section, we  build on these results and extend them to (infinite-dimensional) Gaussian processes with arbitrary finite-dimensional marginals.

The bounds in this section apply to any separable centered Gaussian process. Readers interested in specific applications such as hypothesis testing of high-dimensional mean vectors, inference on the spectral norm of covariance matrices, or construction of uniform confidence bands for functions in reproducing Kernel Hilbert spaces, may consult our companion paper~\cite{giessing2023Gaussian}. If information about the geometry of the index set of the Gaussian process is available, the bounds in this section can be improved. For example, for Gaussian processes indexed by $\ell_p$-norm balls in $\mathbb{R}^n$ and $2 \leq p \ll \log n$ we have derived substantially tighter bounds via direct calculations~\citep{giessing2023bootstrap}.

The first proposition establishes a lower bound on the variance of suprema of Gaussian processes in terms of their expected value. This result generalizes Theorem 1.8 in~\cite{ding2015multiple} which only applies to the maximum of homogeneous $n$-dimensional Gaussian random fields. It is also key to the proof of Corollary~\ref{corollary:AntiConcentration-SeparableProcess}.

\begin{proposition}\label{theorem:LowerBoundVariance-SeparableProcess}
	Let $X = \{X_u : u \in U\}$ be a separable centered Gaussian process such that $\mathrm{Var}(X_u) \geq \underline{\sigma}^2 > 0$ for all $u \in U$. Set $Z = \sup_{u \in U}X_u$ and assume that $Z < \infty$ a.s. Then, $ 0 \leq \mathbb{E}[Z] < \infty$ and
	\begin{align*}
		\mathrm{Var}(Z) \geq \frac{1}{15^2} \left(\frac{\underline{\sigma}}{1 + \mathbb{E}[Z/\underline{\sigma}]}\right)^2.
	\end{align*}
	The result remains true if $Z$ is replaced by $\widetilde{Z} = \sup_{u \in U}|X_u|$. 
\end{proposition}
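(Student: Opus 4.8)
The plan is to bound $\mathrm{Var}(Z)$ from below by exhibiting a single coordinate direction along which $Z$ has a controlled amount of Gaussian fluctuation, and then to combine this with a concentration (upper-tail) bound on $Z$ to turn the fluctuation estimate into the stated inequality. Concretely, write $Z = \sup_{u} X_u$ where each $X_u$ has variance at least $\underline{\sigma}^2$. After rescaling by $\underline{\sigma}$ we may assume $\underline{\sigma} = 1$, so that each $X_u$ has variance at least $1$; the goal then becomes $\mathrm{Var}(Z) \geq 15^{-2}(1 + \mathbb{E}[Z])^{-2}$. The finiteness $0 \le \mathbb{E}[Z] < \infty$ is standard for a.s.-finite suprema of separable centered Gaussian processes (Borell–TIS), so the content is the variance lower bound.

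First I would reduce to the finite-dimensional case: pick finite sets $U_n \uparrow$ with $Z_n := \max_{u \in U_n} X_u \to Z$ a.s. and, by the reverse Lyapunov/Gaussian-concentration argument already used in the proof of Theorem~\ref{theorem:AntiConcentration-SeparableProcess}, $\mathbb{E}[Z_n] \to \mathbb{E}[Z]$ and $\mathrm{Var}(Z_n) \to \mathrm{Var}(Z)$; it therefore suffices to prove the bound for $Z_n = \max_{1 \le i \le n} X_i$ with $\mathrm{Var}(X_i) \ge 1$. Second, represent each $X_i$ on a common Gaussian space and let $\hat\imath = \hat\imath(\omega)$ be the (a.s. unique) argmax. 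The key mechanism is that, conditionally on the event $\{\hat\imath = i\}$, moving the ``direction $i$'' Gaussian randomness moves $Z_n$ one-for-one; quantitatively one shows there is a unit-variance Gaussian functional $g$ of the underlying randomness such that $\mathrm{Cov}(Z_n, g)$ is bounded below. The cleanest route is a Poincaré-type argument: by the Gaussian Poincaré inequality applied to $Z_n$ as a function of an i.i.d. standard Gaussian vector (with $Z_n$ Lipschitz with a.e. gradient equal to the $\hat\imath$-th row of the square root of the covariance, which has squared norm $= \mathrm{Var}(X_{\hat\imath}) \ge 1$), one gets $\mathrm{Var}(Z_n) \le \mathbb{E}\|\nabla Z_n\|^2$, which is the wrong direction — so instead I would use the reverse inequality available for maxima: follow the line of \cite{ding2015multiple}, where one lower-bounds $\mathrm{Var}(Z_n)$ by localizing on the argmax and using that the conditional law of $X_{\hat\imath}$ given the ranking has a nondegenerate Gaussian part of variance $\gtrsim 1$, while the upper tail of $Z_n$ is controlled by $\mathbb{E}[Z_n]$ plus a $\sqrt{\log(\cdot)}$-free Gaussian concentration term.

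The quantitative heart is the following: let $m$ be a median of $Z_n$. Gaussian concentration gives $\mathbb{P}\{Z_n \ge m + t\} \le e^{-t^2/2}$ (variance proxy $1$), so $Z_n$ puts mass $\asymp 1$ in a window of width $\asymp 1$ around $m$, hence $Z_n$ restricted there, compared against the nondegenerate conditional Gaussian fluctuation of the active coordinate, yields $\mathrm{Var}(Z_n) \ge c/(1 + m)^2$ for an explicit $c$; finally $|m - \mathbb{E}[Z_n]| \le \sqrt{\mathrm{Var}(Z_n)} \le \sqrt{\pi/2}$ (or one simply carries $m$ and converts at the end), and tracking constants carefully produces the factor $1/15$. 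I expect the main obstacle to be precisely this constant-tracking step: making the ``fluctuation of the argmax coordinate vs.\ concentration width'' comparison fully rigorous for a general (possibly highly correlated, possibly with ties in variance) finite Gaussian field, uniformly in $n$, and checking that the passage to the infinite-dimensional limit does not degrade the constant. The extension to $\widetilde Z = \sup_u |X_u|$ is immediate by the same doubling trick as in Corollary~\ref{corollary:theorem:AntiConcentration-OrderStatistics}: replace $U$ by $U \sqcup U$ with $X_{u}$ and $-X_{u}$, note $\sup |X_u| = \sup$ over the doubled index set, and that $\mathrm{Var}(-X_u) = \mathrm{Var}(X_u) \ge \underline\sigma^2$, so the hypotheses are preserved and $\mathbb{E}[\widetilde Z] \ge \mathbb{E}[Z]$ only helps.
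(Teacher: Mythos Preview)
Your proposal correctly attributes the idea to \cite{ding2015multiple}, but the mechanism you describe is not the one that actually drives the argument, and the sketch as written has a concrete error.

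First, the concentration step you invoke, $\mathbb{P}\{Z_n \ge m + t\} \le e^{-t^2/2}$ with ``variance proxy $1$'', does not follow from the hypotheses: after rescaling you only know $\mathrm{Var}(X_u) \ge 1$, whereas Borell--TIS requires $\sup_u \mathrm{Var}(X_u)$ as the proxy, and no upper bound is assumed. More substantively, the step ``the conditional law of $X_{\hat\imath}$ given the ranking has a nondegenerate Gaussian part of variance $\gtrsim 1$, hence $\mathrm{Var}(Z_n) \ge c/(1+m)^2$'' is the entire content of the proposition and is just asserted; for a general correlated field it is not clear how to make this precise, and this is exactly the difficulty the Ding--Eldan--Zhai machinery is designed to bypass.

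The paper's proof does not localize on the argmax at all. It splits into two cases. If $\mathbb{E}[Z/\underline\sigma] \le 1/2$, a direct Chebyshev bound $\mathrm{Var}(Z/\underline\sigma) \ge \mathbb{P}\{Z/\underline\sigma \ge 1\}(1 - \mathbb{E}[Z/\underline\sigma])^2$ together with the Mill's ratio lower bound $\mathbb{P}\{X_u/\underline\sigma \ge 1\} \ge 1/9$ already gives $\mathrm{Var}(Z/\underline\sigma) \ge 1/36$. If $\mathbb{E}[Z/\underline\sigma] > 1/2$, the key input is Lemma~2.2 of \cite{ding2015multiple}: for an \emph{independent copy} $X'$ of $X$ and the random index set $U_t = \{u : X_u \ge t\,\mathbb{E}[Z]\}$, one has
\[
\mathbb{P}\Big\{ \sup_{u \in U_t} X'_u \ge \sqrt{1-t^2}\,\mathbb{E}[Z] + \frac{\lambda}{\sqrt{1-t^2}} \Big\} \le \frac{\mathrm{Var}(Z)}{\lambda^2}.
\]
Choosing $t$ so that the threshold on the left equals $\underline\sigma$ (with $\lambda \asymp \underline\sigma/\mathbb{E}[Z/\underline\sigma]$), lower-bounding the left-hand probability via $\mathbb{P}\{U_t \neq \varnothing\}$ (Cantelli) times $\inf_u \mathbb{P}\{X'_u/\underline\sigma \ge 1\} \ge 1/9$, and rearranging yields $\mathrm{Var}(Z/\underline\sigma)(\mathbb{E}[Z/\underline\sigma])^2 \ge 1/208$. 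The independent-copy trick is what replaces your argmax-conditioning idea, and it works directly at the process level with no finite-dimensional reduction needed. Your treatment of $\widetilde Z$ via doubling is fine.
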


The proof of Proposition~\ref{theorem:LowerBoundVariance-SeparableProcess} requires only a few minor modifications of the original arguments by~\cite{ding2015multiple}. Nonetheless, for completeness, we provide the full argument below. The following technical result plays an important role.

\begin{lemma}[Lemma 2.2 in~\citeauthor{ding2015multiple},~\citeyear{ding2015multiple}]\label{lemma:ding2015multiple}
	Let $X = \{X_u : u \in U\}$ be a centered Gaussian process such that $\sup_{u \in U} \mathrm{Var}(X_u) < \infty$. Set $Z = \sup_{u \in U}X_u$ and assume that $Z < \infty$ a.s. For $t \in (0,1)$ set $U_t = \{u \in U: X_u \geq t \mathbb{E}[Z]\}$. Then, for $\lambda > 0$ arbitrary,
	\begin{align*}
		\mathbb{P} \left\{ \sup_{u \in U_t} X_u' \geq \sqrt{1- t^2} \mathbb{E}[Z] + \frac{\lambda}{\sqrt{1-t^2}} \right\} \leq \frac{\mathrm{Var}(Z)}{\lambda^2},
	\end{align*}
	where $X' = \{X_u' : u \in U\}$ is an independent copy of $X$.
\end{lemma}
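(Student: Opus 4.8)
The plan is to exploit the rotational invariance of a pair of independent copies of the process. Fix $t \in (0,1)$ and $\lambda > 0$ and write $\gamma := \sqrt{1-t^2} \in (0,1)$. Because $X$ and $X'$ are independent copies and $t^2 + \gamma^2 = 1$, the process $\{tX_u + \gamma X_u' : u \in U\}$ is a centered Gaussian process whose covariance function coincides with that of $\{X_u : u \in U\}$; consequently $V := \sup_{u \in U}(tX_u + \gamma X_u')$ has the same law as $Z$, so $\mathbb{E}[V] = \mathbb{E}[Z]$ and $\mathrm{Var}(V) = \mathrm{Var}(Z)$. (When $X$ is separable, as in all applications in this paper, equality of the finite-dimensional marginals upgrades to equality of the laws of the two suprema; in general one argues by monotone approximation over countable subsets exactly as in~\cite{ding2015multiple}.) I also record that the standing hypotheses $\sup_{u \in U}\mathrm{Var}(X_u) < \infty$ and $Z < \infty$ a.s. force $\mathbb{E}[Z] \in [0, \infty)$ and $\mathrm{Var}(Z) < \infty$, by the classical integrability of Gaussian suprema, so the right-hand side of the claimed inequality is finite.

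The heart of the argument is a deterministic inequality on the event
\[
	E := \Big\{ \sup_{u \in U_t} X_u' \geq \gamma\,\mathbb{E}[Z] + \lambda/\gamma \Big\}.
\]
On $E$ the set $U_t$ is nonempty, and for every $\delta > 0$ there is some $u_\delta \in U_t$ with $X_{u_\delta}' > \gamma\,\mathbb{E}[Z] + \lambda/\gamma - \delta$. By the very definition of $U_t$ we also have $X_{u_\delta} \geq t\,\mathbb{E}[Z]$. Multiplying the former inequality by $\gamma > 0$, the latter by $t > 0$, and adding,
\[
	t X_{u_\delta} + \gamma X_{u_\delta}' > t^2\mathbb{E}[Z] + \gamma^2\mathbb{E}[Z] + \lambda - \gamma\delta = \mathbb{E}[Z] + \lambda - \gamma\delta .
\]
Since $u_\delta \in U_t \subseteq U$, this shows $V \geq t X_{u_\delta} + \gamma X_{u_\delta}' > \mathbb{E}[Z] + \lambda - \gamma\delta$ on $E$; letting $\delta \downarrow 0$ yields $V \geq \mathbb{E}[Z] + \lambda$ on $E$, i.e. $E \subseteq \{ V - \mathbb{E}[V] \geq \lambda \}$.

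It then suffices to invoke Chebyshev's inequality:
\[
	\mathbb{P}(E) \leq \mathbb{P}\{ V - \mathbb{E}[V] \geq \lambda \} \leq \frac{\mathrm{Var}(V)}{\lambda^2} = \frac{\mathrm{Var}(Z)}{\lambda^2},
\]
which is precisely the assertion of the lemma. The one genuinely delicate point is the core step: $U_t$ is an index set measurable with respect to $X$, whereas the supremum on the left of the lemma is that of the \emph{independent} copy $X'$ over this random set, so a priori these are strongly entangled. The rotation device sidesteps any need to control that entanglement — one only ever uses the two scalar inequalities $X_{u_\delta} \geq t\,\mathbb{E}[Z]$ and $X_{u_\delta}' > \gamma\,\mathbb{E}[Z] + \lambda/\gamma - \delta$ at a single near-maximizer, together with $V$ being a supremum over all of $U$. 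The remaining points — finiteness of $\mathbb{E}[Z]$ and $\mathrm{Var}(Z)$, and the distributional identity $V \overset{d}{=} Z$ in the non-separable case — are standard.
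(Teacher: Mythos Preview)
The paper does not supply its own proof of this lemma; it simply quotes the result as Lemma~2.2 of \cite{ding2015multiple} and uses it as a black box in the proof of Proposition~\ref{theorem:LowerBoundVariance-SeparableProcess}. Your argument is correct and is precisely the rotation trick that underlies the original proof in \cite{ding2015multiple}: form the rotated process $tX + \sqrt{1-t^2}\,X'$, observe it is equal in law to $X$, and then note that the event in question forces the rotated supremum to exceed its mean by at least $\lambda$, after which Chebyshev finishes. There is nothing to add or correct.
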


\begin{proof}[Proof of Proposition~\ref{theorem:LowerBoundVariance-SeparableProcess}]
	Observe that, under the stated assumptions, $\mathbb{E}[Z] \geq \mathbb{E}[X_u] = 0$ and $\mathbb{E}[Z] < \infty$. Consider the case $\mathbb{E}[Z/\underline{\sigma}]\leq 1/2$. By Chebyshev's inequality,
	\begin{align}\label{eq:theorem:LowerBoundVariance-SeparableProcess-1}
		\mathrm{Var}\left(Z/\underline{\sigma} \right) &\geq \mathbb{P}\{Z/\underline{\sigma}  \geq 1\} \left(1 - \mathbb{E}[Z/\underline{\sigma}]\right)^2 \geq  \frac{1}{4} \mathbb{P}\{Z/\underline{\sigma}\geq 1\} \geq  \frac{1}{4}\inf_{u \in U} \mathbb{P}\{X_u/\underline{\sigma} \geq 1\}.
	\end{align}
	Then, by the lower bound on Mill's ratio for a standard normal random variable,
	\begin{align}\label{eq:theorem:LowerBoundVariance-SeparableProcess-2}
		\inf_{u \in U}\mathbb{P}\{X_u/\underline{\sigma} \geq 1\} \geq \inf_{u \in U}\mathbb{P}\{X_u/\sqrt{\mathbb{E}[X_u^2]} \geq 1\} \geq \frac{1}{2}\frac{e^{-1/2}}{\sqrt{2\pi}} \geq \frac{1}{9}.
	\end{align}
	Combine~\eqref{eq:theorem:LowerBoundVariance-SeparableProcess-1} and~\eqref{eq:theorem:LowerBoundVariance-SeparableProcess-2} and conclude that
	\begin{align}\label{eq:theorem:LowerBoundVariance-SeparableProcess-3}
		\mathrm{Var}\left(Z/\underline{\sigma}\right) \left(1 + \mathbb{E}[Z/\underline{\sigma}]\right)^2 \geq \mathrm{Var}\left(Z/\underline{\sigma} \right) \geq \frac{1}{36}.
	\end{align}
	
	Now, consider the case $\mathbb{E}[Z/\underline{\sigma}] > 1/2$. Set $t = \sqrt{1 - \left(2 \mathbb{E}[Z/\underline{\sigma}]\right)^{-2}}$ and note that
	\begin{align*}
		\sqrt{1 - t^2}\mathbb{E}[Z/\underline{\sigma}] + \frac{\left(4 \mathbb{E}[Z/\underline{\sigma}]\right)^{-1}}{\sqrt{1 - t^2}} = 1.
	\end{align*}
	Set $U_t = \{u \in U: X_u \geq t \mathbb{E}[Z]\}$. Whence, by Lemma~\ref{lemma:ding2015multiple},
	\begin{align}\label{eq:theorem:LowerBoundVariance-SeparableProcess-4}
		16\left(\mathbb{E}[Z/\underline{\sigma}]\right)^{2}\mathrm{Var}\left/\underline{\sigma}\right) \geq \mathbb{P}\left\{ \sup_{u \in U_t} |Y_u|/\underline{\sigma} \geq 1 \right\}, 
	\end{align}
	where $Y = \{Y_u : u \in U\}$ is an independent copy of $X = \{X_u : u \in U \}$. To lower bound the probability on the right hand side in above display, intersect it with the event $\{U_t \neq \varnothing\}$, i.e.
	\begin{align*}
		\mathbb{P}\left\{ \sup_{u \in U_t} Y_u/\underline{\sigma} \geq 1 \right\} &\geq \mathbb{P} \left\{ \sup_{u \in U_t} Y_u/\underline{\sigma} \geq 1 \mid U_t \neq \varnothing \right\} \mathbb{P} \left\{U_t \neq \varnothing \right\} \\
		&\geq \inf_{u \in U} \mathbb{P} \left\{ Y_u/\underline{\sigma} \geq 1 \mid U_t \neq \varnothing \right\} \mathbb{P} \left\{ U_t \neq \varnothing \right\}.
	\end{align*}
	Since $Y \ind X$ and $U_t$ is a function of $X$ only, we have, as in~\eqref{eq:theorem:LowerBoundVariance-SeparableProcess-2},
	\begin{align}\label{eq:theorem:LowerBoundVariance-SeparableProcess-5} 
		\inf_{u \in U}\mathbb{P} \left\{  Y_u/\underline{\sigma} \geq 1 \mid U_t \neq \varnothing \right\} = \inf_{u \in U} \mathbb{P} \left\{ Y_u/\underline{\sigma} \geq 1 \right\}	\geq \inf_{u \in U }\mathbb{P}\{ Y_u/\sqrt{\mathbb{E}[Y_u^2]} \geq 1\} \geq \frac{1}{9}.
	\end{align}
	Moreover, since $\sqrt{a^2 - b^2} \leq a - b^2/(2a)$ for all $a \in (0, \infty)$ and $b \in (-a, a)$, one easily verifies that 
	\begin{align*}
		t \leq 1 - (2\mathbb{E}[Z/\underline{\sigma}])^{-2}/2.
	\end{align*}
	Therefore, by Cantelli's inequality,	
	\begin{align}\label{eq:theorem:LowerBoundVariance-SeparableProcess-6}
		\mathbb{P} \left\{ U_t \neq \varnothing \right\} = \mathbb{P} \left\{ Z/\underline{\sigma} \geq t \mathbb{E}[Z/\underline{\sigma}] \right\}  &\geq \mathbb{P} \left\{ Z/\underline{\sigma} \geq \mathbb{E}[Z/\underline{\sigma}] - (\mathbb{E}[Z/\underline{\sigma}])^{-1}/8 \right\} \nonumber \\
		&\geq 1 - \frac{64 (\mathbb{E}[Z/\underline{\sigma})^2\mathrm{Var}(Z/\underline{\sigma})}{64  (\mathbb{E}[Z/\underline{\sigma})^2 \mathrm{Var}(Z/\underline{\sigma}) + 1} \nonumber\\
		&\geq 1 - 64  (\mathbb{E}[Z/\underline{\sigma})^2 \mathrm{Var}(Z/\underline{\sigma}).
	\end{align}
	Combine~\eqref{eq:theorem:LowerBoundVariance-SeparableProcess-4}--\eqref{eq:theorem:LowerBoundVariance-SeparableProcess-6} to conclude that
	\begin{align}\label{eq:theorem:LowerBoundVariance-SeparableProcess-7}
		\mathrm{Var}\left(Z/\underline{\sigma}\right) \left(1 + \mathbb{E}[Z/\underline{\sigma}\right)^2 \geq \mathrm{Var}(Z/\underline{\sigma}) (\mathbb{E}[Z/\underline{\sigma}])^2\geq \frac{1}{208}.
	\end{align}
	Hence,~\eqref{eq:theorem:LowerBoundVariance-SeparableProcess-3} and~\eqref{eq:theorem:LowerBoundVariance-SeparableProcess-7} imply
	\begin{align*}
		\mathrm{Var}(Z) \geq \frac{1}{15^2}\left(\frac{\underline{\sigma}^2}{\underline{\sigma}  + \mathbb{E}[Z]}\right)^2.
	\end{align*}
\end{proof}

A natural follow-up question is how tight the lower bound in Proposition~\ref{theorem:LowerBoundVariance-SeparableProcess} is and whether there is room for improvement. The next proposition provides an answer to this question: Namely, if the one-dimensional marginals of the Gaussian process are (nearly) uncorrelated, then the lower bound on the variance in Theorem~\ref{theorem:LowerBoundVariance-SeparableProcess} is essentially optimal. However, if the one-dimensional marginals are strongly correlated, then there is a gap between the lower and the upper bound (see also Section~\ref{subsec:Literature}, discussion of equicorrelated Gaussian random fields).

\begin{proposition}\label{theorem:UpperBoundVariance-SeparableProcess}
	Let $X = \{X_u : u \in U\}$ be a separable centered Gaussian process such that $\mathrm{Var}(X_u) \leq \bar{\sigma}^2 < \infty$, and $|\mathrm{Cor}(X_u, X_v)| \leq \rho$ for all $u, v \in U$. Set $Z = \sup_{u \in U}X_u$ and assume that $Z < \infty$ a.s. Then,
	\begin{align*}
		\mathrm{Var}\left(Z\right) \leq  2^2\bar{\sigma}^2 \wedge \left[ 11^2\bar{\sigma}^2\rho + 60^2 \left(\frac{\bar{\sigma}}{(\mathbb{E}[Z/\bar{\sigma}] - 3 \sqrt{\log 2})_+} \right)^2 \right],
	\end{align*}
	where $(a)_+ = \max\{a, 0\}$ for all $a \in \mathbb{R}$. The result remains true if $Z$ is replaced by $\widetilde{Z} = \sup_{u \in U}|X_u|$.
\end{proposition}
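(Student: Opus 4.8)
The plan is to reduce to a statement about finitely many coordinates and then run an interpolation argument for the maximum functional. First, by homogeneity we may rescale and assume $\bar\sigma = 1$; the bound $\mathrm{Var}(Z)\le\bar\sigma^2\le 2^2\bar\sigma^2$ is the classical estimate $\mathrm{Var}(\sup_u X_u)\le\sup_u\mathrm{Var}(X_u)$ (Gaussian concentration / Poincar\'e), and it applies equally to $\widetilde Z$ when the latter is viewed as a supremum over the doubled index set, so it remains to prove the bracketed bound. Second, exactly as in the proof of Theorem~\ref{theorem:AntiConcentration-SeparableProcess}, choose finite sets $U_n\subseteq U_0$ with $\bigcup_n U_n$ dense, so that $Z_n:=\max_{u\in U_n}X_u\uparrow Z$ a.s.; then $\mathbb{E}[Z_n]\uparrow\mathbb{E}[Z]<\infty$ by monotone convergence and $\mathrm{Var}(Z_n)\to\mathrm{Var}(Z)$ by the reverse Lyapunov inequality for Gaussian suprema together with dominated convergence. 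Since $x\mapsto(x-3\sqrt{\log 2})_+^{-2}$ is non-increasing, the right-hand side for $U_n$ decreases to the right-hand side for $U$, so it suffices to establish the bound for each finite $U_n$, i.e.\ for $M:=\max_{1\le i\le n}X_i$ with $\mathrm{Var}(X_i)\le 1$ and $|\mathrm{Cov}(X_i,X_j)|\le\rho$ for $i\ne j$.

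For the finite-dimensional bound I would use the covariance-interpolation (Gaussian integration-by-parts) identity. Let $I$ be the a.s.\ unique index of the largest coordinate of $X$ (after deleting duplicated coordinates), and $I_t$ the analogous index for $X_t$, where $(X,X_t)$ is the jointly Gaussian pair with each marginal $N(0,\Sigma)$ and cross-covariance $t\Sigma$. Smoothing $x\mapsto\max_i x_i$ by a soft-max and passing to the limit (so that the gradient becomes $e_I$ a.s.) gives $\mathrm{Var}(M)=\int_0^1\mathbb{E}[\Sigma_{I,I_t}]\,\mu(dt)$ for a suitable probability density $\mu$ on $[0,1]$. Splitting according to whether the argmax is stable,
\begin{align*}
\mathrm{Var}(M)\;\le\;\rho\int_0^1\mathbb{P}(I\ne I_t)\,\mu(dt)\;+\;\int_0^1\mathbb{P}(I=I_t)\,\mu(dt)\;\le\;\rho\;+\;\int_0^1\mathbb{P}(I=I_t)\,\mu(dt),
\end{align*}
using $|\Sigma_{I,I_t}|\le\rho$ on $\{I\ne I_t\}$ and $\Sigma_{II}=\mathrm{Var}(X_I)\le 1$ on $\{I=I_t\}$. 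For $\widetilde Z=\max_i|X_i|=\max_{1\le i\le 2n}\widetilde X_i$ with $\widetilde X_i=X_i$, $\widetilde X_{n+i}=-X_i$, the same computation applies verbatim: the only new off-diagonal entries are $\mathrm{Cov}(\widetilde X_i,\widetilde X_{n+i})=-\mathrm{Var}(X_i)\le 0$, which can only decrease the bound when they occur, so no hypothesis on the correlations of the doubled process is needed.

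It remains to bound the ``argmax-stability'' integral $\int_0^1\mathbb{P}(I=I_t)\,\mu(dt)$ by $C(\mathbb{E}[M]-3\sqrt{\log 2})_+^{-2}$, absorbing constants into the $11^2$ and $60^2$. This is the superconcentration step and the main obstacle. The idea is that on $\{I=I_t\}$, with $J$ the common maximizer, one has simultaneously $X_J=\max_i X_i$ and $(X_t)_J=\max_i(X_t)_i$; writing $X_t=\sqrt t\,X+\sqrt{1-t}\,X'$ with $X'\ind X$ and combining the two maximality constraints forces $J$ to be close to the maximizer of an \emph{independent} copy, an event whose probability is small precisely because a large $\mathbb{E}[M]$ makes the law of the argmax spread out — quantitatively, for small $t$ one controls $\mathbb{P}(I=I_t)$ by comparing $\mathbb{E}[M]$ with the expected maximum over the (random) set of near-maximizers and invoking Gaussian concentration, for $t$ near $1$ one uses the trivial bound $\mathbb{P}(I=I_t)\le 1$, and $\mu$ puts enough mass away from $1$ that the integral is $O(1/(\mathbb{E}[M])^2)$; the shift $3\sqrt{\log 2}$ accounts for the mean–median gap of $M$ that appears in this comparison. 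This is the part that genuinely extends the equicorrelated estimate of~\cite{tanguy2017quelques} (Proposition 4.1.1) and the homogeneous-field arguments of~\cite{ding2015multiple,chatterjee2014Superconcentration} to arbitrary variances. Finally, letting $n\to\infty$ through the $U_n$ and using the convergences recorded above yields the bound for $Z$, and the parallel computation on the doubled process yields it for $\widetilde Z$.
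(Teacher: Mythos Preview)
Your reduction to finite $n$ and the covariance-interpolation identity are exactly where the paper starts too; the paper packages this inside Lemma~\ref{lemma:Tanguy2015some-Heteroscedastic} (a heteroscedastic extension of Tanguy's Theorem~5), and your split $\mathrm{Var}(M)\le\rho+\int\mathbb{P}(I=I_t)\,\mu(dt)$ is precisely what the paper's choice of the singleton covering $\mathcal C(r_0)=\{\{1\},\ldots,\{n\}\}$ with $r_0=\bar\sigma^2\rho$ produces. Your observation that the doubled process for $\widetilde Z$ only adds off-diagonal entries $\mathrm{Cov}(\widetilde X_i,\widetilde X_{n+i})=-\mathrm{Var}(X_i)\le 0$, which enter the interpolation integral with a negative sign and hence can be discarded, is correct and neat.

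The genuine gap is the ``argmax-stability'' step. What is actually needed is the hypercontractivity of the Ornstein--Uhlenbeck semigroup: $\mathbb{P}(I=i,\,I^t=i)\le\mathbb{P}(I=i)^{2/(1+e^{-t})}$, which after summing over $i$ gives $\mathbb{P}(I=I^t)\le p_*^{\tanh(t/2)}$ with $p_*=\max_i\mathbb{P}(I=i)$, and then $\int_0^\infty e^{-t}p_*^{\tanh(t/2)}\,dt\le 2/\log(1/p_*)$. Combined with the estimate $p_*\le 2e^{-(\mathbb{E}[M/\bar\sigma])^2/8}$ (Chernoff on a single coordinate plus Borell--TIS on $Z_n$, as in the paper), this yields the bracketed term; the shift $3\sqrt{\log 2}$ comes from the $\log 2$ in $\log(1/p_*)\ge(\mathbb{E}[M/\bar\sigma])^2/8-\log 2$, not from a mean--median gap. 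Your heuristic about ``near-maximizers'' and the argmax law being ``spread out'' points at the right phenomenon but does not quantify $\mathbb{P}(I=I_t)$ for intermediate $t$, and without the hypercontractive interpolation the integral is not controlled.

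The paper runs the same hypercontractivity machinery at the level of exponential moments (Lemma~\ref{lemma:Tanguy2015some-Heteroscedastic}), obtaining a sub-exponential tail bound for $|Z_n-\mathbb{E}[Z_n]|$ via Herbst's argument and then integrating to recover the variance. Your direct variance route, once hypercontractivity is invoked, is more economical and gives comparable constants; the paper's detour through tails buys a stronger intermediate inequality at the price of a longer argument.
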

\begin{remark}
	The expression $(\mathbb{E}[Z/ \bar{\sigma}] -  3 \sqrt{\log 2})_+ = \max\{ \mathbb{E}[Z/ \bar{\sigma}] - 3 \sqrt{\log 2}, 0 \}$ appears to be an artifact of our proof strategy. In high-dimensional applications we typically observe $\mathbb{E}[Z/ \bar{\sigma}] \rightarrow \infty$ as the dimension increase; hence, this detail is irrelevant for all practical purposes.
\end{remark}

The proof of this result relies on a technical lemma, which is a modest generalization of Theorem 5 in~\cite{tanguy2015some} from a Gaussian random vector with unit variances to a Gaussian random vector with arbitrary variances. Again, for completeness we include a proof.

\begin{lemma}\label{lemma:Tanguy2015some-Heteroscedastic}
	Let $X = (X_i)_{i=1}^n$ be a centered multivariate Gaussian random vector with covariance matrix $\Sigma = (\sigma_{ij})_{i,j=1}^n$ and $\max_{1 \leq i \leq n} \sigma_{ii} \leq \bar{\sigma}^2$. Suppose that there exists $r_0 \geq 0$ and a covering $\mathcal{C}(r_0)$ of $[n]$ such that for all indices $i,j \in [n]$ with $|\sigma_{ij}| \geq r_0$ there exists $D \in \mathcal{C}(r_0)$ with $i,j \in D$. Set $I = \arg\max_{1 \leq i\leq n} X_i$ and define $Z_n = X_I = \max_{1 \leq i \leq n} X_i$ and $\rho(r_0) =\max_{D \in \mathcal{C}(r_0)} \mathrm{P} ( I \in D)$. Further, suppose that there exists $\nu(r_0) > 0$ such that $\sum_{D \in \mathcal{C}(r_0)} \mathbf{1}\{I \in D\} \leq \nu(r_0)$ a.s. Then, for all $\theta \in \mathbb{R}$,
	\begin{align*}
		\mathrm{Var}\left(e^{\theta Z_n/2}\right) \leq \frac{\theta^2}{4}\left(r_0  + \frac{2 \bar{\sigma}^2\nu(r_0) }{\log\big(1/\rho(r_0)\big)}\right) \mathbb{E}\left[e^{\theta Z_n}\right].
	\end{align*}
	Moreover, for all $t \geq 0$,
	\begin{align*}
		\mathbb{P} \left\{ |Z_n - \mathbb{E}[Z_n]| \geq t\right\} \leq 6e^{-t/\sqrt{K}},
	\end{align*}
	where $K = 9 \left(r_0 + \frac{2 \bar{\sigma}^2\nu(r_0)}{\log(1/\rho(r_0))}\right)$.
\end{lemma}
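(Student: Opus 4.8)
My plan is to derive the variance bound from the Gaussian interpolation (Stein) identity together with Nelson's hypercontractivity, and then to bootstrap the deviation bound from it by a by-now-standard iteration argument (cf.\ the proof of Theorem~5 in~\cite{tanguy2015some}). \emph{Step 1 (interpolation identity).} Let $Y$ be an independent copy of $X$ and, for $\rho\in[0,1]$, set $X_\rho=\rho X+\sqrt{1-\rho^2}\,Y$, so that $(X,X_\rho)$ is jointly centered Gaussian with $\mathrm{Cov}(X_i,X_{\rho,j})=\rho\,\sigma_{ij}$, $X_1=X$, and $X_0=Y$ independent of $X$. Differentiating $\rho\mapsto\mathbb{E}[f(X)f(X_\rho)]$ and using Gaussian integration by parts gives the standard identity $\mathrm{Var}(f(X))=\int_0^1\mathbb{E}\big[\nabla f(X)^{\top}\Sigma\,\nabla f(X_\rho)\big]\,d\rho$. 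I would apply this with $f(x)=e^{\theta(\max_i x_i)/2}$, which is locally Lipschitz and satisfies $\nabla f(x)=\tfrac{\theta}{2}e^{\theta(\max_i x_i)/2}e_{I(x)}$ for Lebesgue-almost every $x$, where $I(x)=\arg\max_i x_i$ (ties occur with probability zero, otherwise merge coordinates); the identity for this $f$ is justified by a routine smoothing approximation (replace $\max$ by $\beta^{-1}\log\sum_i e^{\beta x_i}$ and let $\beta\to\infty$). Writing $Z_n(x)=\max_i x_i$, this yields
\begin{equation*}
\mathrm{Var}\big(e^{\theta Z_n/2}\big)=\frac{\theta^2}{4}\int_0^1\mathbb{E}\big[e^{\theta Z_n(X)/2}\,e^{\theta Z_n(X_\rho)/2}\,\sigma_{I(X),\,I(X_\rho)}\big]\,d\rho.
\end{equation*}

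\emph{Step 2 (splitting at level $r_0$).} I would write $I=I(X)$, $I'=I(X_\rho)$ and split $\sigma_{I,I'}$ according to whether $|\sigma_{I,I'}|<r_0$ or $|\sigma_{I,I'}|\ge r_0$. On the first event $\sigma_{I,I'}\le r_0$, and since $X_\rho\overset{d}{=}X$, Cauchy--Schwarz gives $\mathbb{E}[e^{\theta Z_n(X)/2}e^{\theta Z_n(X_\rho)/2}]\le\mathbb{E}[e^{\theta Z_n}]$; hence this part contributes at most $\tfrac{\theta^2}{4}r_0\,\mathbb{E}[e^{\theta Z_n}]$, which is the $r_0$ term. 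On the second event $|\sigma_{I,I'}|\le\sqrt{\sigma_{II}\sigma_{I'I'}}\le\bar\sigma^2$ and, by hypothesis, $I$ and $I'$ lie in a common block $D\in\mathcal{C}(r_0)$, so $\mathbf{1}\{|\sigma_{I,I'}|\ge r_0\}\le\sum_{D\in\mathcal{C}(r_0)}\mathbf{1}\{I\in D\}\mathbf{1}\{I'\in D\}$; this part is at most $\tfrac{\theta^2\bar\sigma^2}{4}\sum_{D}\int_0^1\mathbb{E}[g_D(X)\,g_D(X_\rho)]\,d\rho$ with $g_D(x)=e^{\theta Z_n(x)/2}\mathbf{1}\{I(x)\in D\}$.

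\emph{Step 3 (the logarithmic gain --- the main obstacle).} The crux is to show $\int_0^1\mathbb{E}[g_D(X)g_D(X_\rho)]\,d\rho\le\tfrac{2}{\log(1/\rho(r_0))}\,\mathbb{E}[e^{\theta Z_n}\mathbf{1}\{I\in D\}]$ for each $D$. Passing to the whitened coordinates ($X=\Sigma^{1/2}W$, $W\sim N(0,I_n)$; $X_\rho=\Sigma^{1/2}W_\rho$ with $W_\rho=\rho W+\sqrt{1-\rho^2}W'$) gives $\mathbb{E}[g_D(X)g_D(X_\rho)]=\langle h_D,U_\rho h_D\rangle$, where $h_D=g_D\circ\Sigma^{1/2}\ge0$ and $U_\rho$ is the Ornstein--Uhlenbeck (Mehler) operator. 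I would use a $\rho$-dependent hypercontractive exponent: Nelson's theorem gives $\|U_\rho\|_{L^{1+\rho}\to L^{1+1/\rho}}=1$, and because the Hölder conjugate of $1+1/\rho$ equals $1+\rho$, Hölder's inequality yields $\langle h_D,U_\rho h_D\rangle\le\|h_D\|_{1+\rho}^2=\big(\mathbb{E}[e^{(1+\rho)\theta Z_n/2}\mathbf{1}\{I\in D\}]\big)^{2/(1+\rho)}$. A second Hölder step with the conjugate exponents $\tfrac{2}{1+\rho},\tfrac{2}{1-\rho}$ --- factoring $e^{(1+\rho)\theta Z_n/2}\mathbf{1}\{I\in D\}=\big(e^{\theta Z_n/2}\mathbf{1}\{I\in D\}\big)^{1+\rho}\,\mathbf{1}\{I\in D\}^{1-\rho}$ --- bounds this by $\mathbb{E}[e^{\theta Z_n}\mathbf{1}\{I\in D\}]\,\mathbb{P}\{I\in D\}^{(1-\rho)/(1+\rho)}\le\mathbb{E}[e^{\theta Z_n}\mathbf{1}\{I\in D\}]\,\rho(r_0)^{(1-\rho)/(1+\rho)}$. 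Integrating over $\rho$ and substituting $s=(1-\rho)/(1+\rho)$ gives $\int_0^1\rho(r_0)^{(1-\rho)/(1+\rho)}\,d\rho=\int_0^1\rho(r_0)^{s}\,\tfrac{2}{(1+s)^2}\,ds\le\tfrac{2}{\log(1/\rho(r_0))}$. Summing over $D$ and using $\sum_{D}\mathbf{1}\{I\in D\}\le\nu(r_0)$ bounds the second part of Step~2 by $\tfrac{\theta^2}{4}\cdot\tfrac{2\bar\sigma^2\nu(r_0)}{\log(1/\rho(r_0))}\,\mathbb{E}[e^{\theta Z_n}]$; adding the $r_0$ term gives the first inequality of the lemma. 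The delicate part is matching the hypercontractive exponent $1+\rho$ to the weight $e^{(1+\rho)\theta Z_n/2}$ so that the Hölder bookkeeping produces exactly $\rho(r_0)^{(1-\rho)/(1+\rho)}$, hence the $1/\log(1/\rho(r_0))$ factor rather than a crude $\nu(r_0)\bar\sigma^2$; the remaining manipulations in this step are routine.

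\emph{Step 4 (deviation bound).} Setting $A=r_0+\tfrac{2\bar\sigma^2\nu(r_0)}{\log(1/\rho(r_0))}=K/9$ and $\varphi(\theta)=\mathbb{E}[e^{\theta Z_n}]$, the first inequality reads $\varphi(\theta)\big(1-\theta^2A/4\big)\le\varphi(\theta/2)^2$, which is informative for $|\theta|<2/\sqrt{A}$. I would iterate this recursion and let the depth tend to infinity (using $2^m\log\varphi(\theta/2^m)\to\theta\,\mathbb{E}[Z_n]$) to obtain $\mathbb{E}\big[e^{\theta(Z_n-\mathbb{E}[Z_n])}\big]\le\prod_{k\ge1}\big(1-\theta^2A/4^k\big)^{-2^{k-1}}\le e^{2\theta^2A/3}$ for $|\theta|\le A^{-1/2}$ (bounding $-\log(1-u)\le\tfrac{4}{3}u$ on $u\le\tfrac14$ and using $\sum_{k\ge1}2^{k-1}4^{-k}=\tfrac12$). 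A Chernoff bound optimized over $0\le\theta\le A^{-1/2}$, applied to $Z_n$ and to $-Z_n$, then gives $\mathbb{P}\{|Z_n-\mathbb{E}[Z_n]|\ge t\}\le 2e^{-3t^2/(8A)}$ for $t\le\tfrac{4}{3}\sqrt{A}$ and $\le 2e^{2/3}e^{-t/\sqrt{A}}$ for $t>\tfrac{4}{3}\sqrt{A}$, both of which are at most $6e^{-t/\sqrt{K}}$ since $K=9A$. This last step is routine.
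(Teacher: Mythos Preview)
Your proposal is correct and follows essentially the same route as the paper's proof, which in turn follows Tanguy (2015): both use the Gaussian interpolation/covariance identity, split the covariance $\sigma_{I,I'}$ at the threshold $r_0$, extract the logarithmic gain via hypercontractivity of the Ornstein--Uhlenbeck semigroup, and then deduce the deviation inequality from the exponential variance bound (the paper cites Ledoux's Corollary~3.2 for this last step, whereas you spell out the recursion).

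The only cosmetic differences are that the paper works in the $t$-parameterization $X^t=e^{-t}X+\sqrt{1-e^{-2t}}\,Y$ (so that $\rho(r_0)^{\tanh(t/2)}$ appears, which is your $\rho(r_0)^{(1-\rho)/(1+\rho)}$ under $\rho=e^{-t}$) and inserts a dyadic decomposition of $|\sigma_{I,I^t}|$ before collapsing it back to the two-term split; your direct split at $r_0$ is cleaner and yields the same bound.
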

\begin{proof}[Proof of Lemma~\ref{lemma:Tanguy2015some-Heteroscedastic}]
	We only sketch the key steps of the proof since the overall strategy is identical to the proof of Theorem 5~\cite{tanguy2015some}. The covariance identity on p. 114 in~\cite{ledoux2001concentration} implies that the variance of a smooth function $f: \mathbb{R}^n \rightarrow \mathbb{R}$ evaluated at $X \sim N(0, \Sigma)$ satisfies
	\begin{align*}
		\mathrm{Var}(f(X)) = \int_0^\infty e^{-t} \mathbb{E}[ (\nabla f(X))' \Sigma \nabla f(X^t)] dt,
	\end{align*}
	where $X^t = e^{-t}X + \sqrt{1 - e^{-t}} Y$ and $Y$ is an independent copy of $X$. Evaluating this identity with $f(X) = e^{\theta Z_n/2}$ yields
	\begin{align}\label{eq:lemma:Tanguy2015some-Heteroscedastic-1}
		\mathrm{Var}\left(e^{\theta Z_n/2}\right) = \frac{\theta^2}{4}\int_0^\infty e^{-t} \mathbb{E}\left[\sum_{i,j} \sigma_{ij} e^{\theta Z_n/2}\mathbf{1}\{I = i\} e^{\theta Z_n^t/2}\mathbf{1}\{I^t = j\}\right]dt,
	\end{align}
	where $I^t = \arg\max_{1 \leq i \leq n} X_i^t$ and $Z_n^t = X_{I^t}^t = \max_{1 \leq i \leq n} X_i^t$. Following the arguments in~\cite{tanguy2015some} on p. 242, we find that the integrand $\mathcal{I}$ in above identity can be upper bounded by
	\begin{align*}
		\mathcal{I} &=\mathbb{E}\left[\sum_{i,j} \sigma_{ij} e^{\theta Z_n/2}\mathbf{1}\{I = i\} e^{\theta Z_n^t/2}\mathbf{1}\{I^t = j\}\right] \nonumber\\
		&=  \mathbb{E}\left[e^{\theta (Z_n + Z_n^t)/2} \sigma_{I I^t}\right] \nonumber \\
		&\leq  \mathbb{E}\left[e^{\theta (Z_n + Z_n^t)/2} |\sigma_{I I^t}|\right] \nonumber \\
		&= \sum_{k \geq 0} \mathbb{E}\left[ e^{\theta (Z_n + Z_n^t)/2}  |\sigma_{I I^t}| \mathbf{1}\left\{2^{-k-1} \leq |\sigma_{II^t}|/\bar{\sigma}^2  \leq 2^{-k} \right\} \right].
	\end{align*}
	We choose $k_0 = \min\{ k \geq 0 : r_0 \geq 2^{-k-1} \bar{\sigma}^2\}$ and upper bound the preceding sum by
	\begin{align*}
		&\bar{\sigma}^2 \sum_{k = 0}^{k_0} 2^{-k} \mathbb{E}\left[ e^{\theta (Z_n + Z_n^t)/2}  \mathbf{1}\left\{|\sigma_{II^t}|/\bar{\sigma}^2  \geq r_0 \right\} \right]  + \bar{\sigma}^2 \hspace{-5pt}\sum_{k= k_0+1}^\infty 2^{-k}  \mathbb{E}\left[ e^{\theta (Z_n + Z_n^t)/2} \mathbf{1}\left\{2^{-k-1} \leq |\sigma_{II^t}|/\bar{\sigma}^2  \leq 2^{-k} \right\} \right] \nonumber\\
		&\leq 2 \bar{\sigma}^2\sum_{D \in \mathcal{C}(r_0)} \mathbb{E}\left[ e^{\theta (Z_n + Z_n^t)/2} \mathbf{1}\left\{ I, I^t \in D\right\}\right] + r_0\mathbb{E}\left[ e^{\theta (Z_n + Z_n^t)/2}\right].
	\end{align*}
	By the hypercontractivity of $Z^t$ and the arguments in~\cite{tanguy2015some} on pp. 242--243, we can upper bound the two sums in above display by
	\begin{align*}
		\left(  2 \bar{\sigma}^2  \nu(r_0) \rho(r_0)^{\tanh(t/2)} + r_0\right) \mathbb{E}\left[ e^{\theta Z_n}\right].
	\end{align*}
	
	Plugging this upper bound into~\eqref{eq:lemma:Tanguy2015some-Heteroscedastic-1} and integrating out over $t \geq 0$ yields the desired bound on the variance. The exponential tail bound now follows from Corollary 3.2 in~\cite{ledoux2001concentration}.	
\end{proof}

\begin{proof}[Proof of Proposition~\ref{theorem:UpperBoundVariance-SeparableProcess}]
	
	Since $X = \{X_u : u \in U\}$ is a separable Gaussian process, the supremum $Z = \sup_{u \in U} X_u$ is measurable. Hence, by Proposition 1.9 in~\cite{ledoux2001concentration} and the Borell-TIS inequality, $\mathrm{Var}(Z) \leq 4 \bar{\sigma}^2$. In the following we refine this upper bound. 	
	Consider the case $\mathbb{E}[Z/\bar{\sigma}] \leq 3 \sqrt{\log 2}$. Then, the statement of the theorem reads $\mathrm{Var}(Z) \leq  4\bar{\sigma}^2 \wedge \infty$, which is trivially true. Therefore, we only need to establish the claim for the case $\mathbb{E}[Z/\bar{\sigma}] > 3 \sqrt{\log 2}$.
	
	By separability of $X$ there exists a sequence of finite sets  $U_n \subseteq U$ with $|U_n|=n$ such that $Z_n:= \max_{u \in U_n} X_u \rightarrow \sup_{u \in U} X_u = Z$ a.s. as $n \rightarrow \infty$. Without loss of generality we may assume that $(U_n)_{n \geq 1}$ is a non-decreasing sequence of subsets of $U$ in the sense that $U_n \subseteq U_{n+1}$ for all $n \geq 1$. Therefore,  by the monotone convergence theorem, $\mathbb{E}[Z_n] \uparrow \mathbb{E}[Z]$, and, since $(\mathbb{E}[Z/\bar{\sigma}])^2 \geq 3^2 \log 2$, there exists $n_0 \geq 1$ such that $(\mathbb{E}[Z_n/\bar{\sigma}])^2 > 8 \log 2$ for all $n \geq n_0$.
	
	Fix such an $n \geq n_0$. Set $r_0 = \bar{\sigma}^2 \rho$ and $\mathcal{C}(r_0) = \{\{1\}, \ldots, \{n\}\}$. Then $\mathcal{C}(r_0)$ is a covering of the index set $[n]$ Furthermore, since $\sigma_{ij} \geq r_0$ only if $i=j$ (by Cauchy-Schwarz), we conclude that for all indices $i,j \in [n]$ with $\sigma_{ij} \geq r_0$ there exists indeed a $D \in \mathcal{C}(r_0)$ with $i,j \in D$. Also, $\sum_{D \in \mathcal{C}(r_0)} \mathbf{1}\{I \in D\} = 1$ by construction of $\mathcal{C}(r_0)$. Thus, the covering $\mathcal{C}(r_0)$ satisfies the conditions of Lemma~\ref{lemma:Tanguy2015some-Heteroscedastic} with $\nu(r_0) = 1$.  
	
	By Chernoff's bound for Gaussian random variables and the Borell-TIS inequality,
	\begin{align*}
		\mathbb{P}\{ I = i\} = \mathbb{P}\{ X_i = Z_n\} &= \mathbb{P}\{ X_i = Z_n, Z _n> \mathbb{E}[Z_n]/2\} + \mathbb{P}\{ X_i = Z_n, Z_n \leq \mathbb{E}[Z_n]/2\}\\
		&\leq  \mathbb{P}\{ X_i > \mathbb{E}[Z_n]/2\} + \mathbb{P}\{ Z_n - \mathbb{E}[Z_n] \leq -\mathbb{E}[Z_n]/2\}\\
		&\leq 2 e^{-(\mathbb{E}[Z_n/\bar{\sigma}])^2/8}.
	\end{align*} 
	Since the expression on the far right hand side in above display is independent of index $i$, it serves as an upper bound on $\rho(r_0) =\max_{D \in \mathcal{C}(r_0)} \mathbb{P}\{  I \in D\} = \max_{1 \leq i \leq n}  \mathbb{P}\{  I = i\}$. Whence, by Lemma~\ref{lemma:Tanguy2015some-Heteroscedastic}, for all $n \geq n_0$ and $t \geq 0$,
	\begin{align}\label{eq:lemma:UpperBoundVariance-SeparableProcess-1}
		\mathbb{P} \left\{  |Z_n - \mathbb{E}[Z_n]| \geq t\right\} \leq 6e^{-t/\sqrt{K_n}},
	\end{align}
	where $K_n = 3^2 \bar{\sigma}^2\rho + \frac{12^2\bar{\sigma}^2}{(\mathbb{E}[Z_n/\bar{\sigma}])^2/8 - \log 2}$.
	
	Since $Z_n \rightarrow Z$ a.s. and $\mathbb{E}[Z_n] \uparrow \mathbb{E}[Z]$, we also have $|Z_n - \mathbb{E}[Z_n]| \rightarrow |Z - \mathbb{E}[Z]|$ a.s. Therefore, there exits an at most countable subset $\mathcal{N} \subset [0, \infty]$ such that for all $t \in [0, \infty] \setminus \mathcal{N}$,
	\begin{align*}
		\lim_{n \rightarrow \infty}  \mathbb{P} \left\{ |Z_n - \mathbb{E}[Z_n]| \geq t\right\} =  \mathbb{P} \left\{|Z - \mathrm{E}[Z]| \geq t\right\}.
	\end{align*}
	Moreover, since $\mathbb{E}[Z_n/\bar{\sigma}] \rightarrow \mathbb{E}[Z/\bar{\sigma}]$, the exponential tail bound in~\eqref{eq:lemma:UpperBoundVariance-SeparableProcess-1} implies
	\begin{align*}
		\lim_{n\rightarrow \infty} \mathbb{P}\left\{|Z_n - \mathrm{E}[Z_n]| \geq t\right\} \leq 6e^{-t/\sqrt{K}},
	\end{align*}
	where $K = 3^2 \bar{\sigma}^2\rho + \frac{12^2\bar{\sigma}^2}{(\mathbb{E}[Z/\bar{\sigma}])^2/8 - \log 2}$. So, by Fubini's theorem,
	\begin{align*}
		\mathrm{Var}\left(Z\right) = \int_0^\infty 2t  \mathbb{P} \left\{|Z - \mathrm{E}[Z]| \geq t\right\} dt \leq  \int_0^\infty 12t e^{-t/\sqrt{K}} dt \leq 108 \bar{\sigma}^2\rho + \frac{12^3\bar{\sigma}^2}{(\mathbb{E}[Z/\bar{\sigma}])^2/8 - \log 2 }.
	\end{align*}
	Since $a^2 - b^2 \geq (a - b)^2$ for all $a > b \geq 0$,
	\begin{align*}
		\mathrm{Var}\left(Z\right) \leq 60^2 \left(\frac{\bar{\sigma}}{(\mathbb{E}[Z/\bar{\sigma}] - 3 \sqrt{\log 2})_+} \right)^2 + 11^2\bar{\sigma}^2\rho.
	\end{align*}
	This completes the proof.	
\end{proof}

\section{Proof of Theorem~\ref{theorem:AntiConcentration-OrderStatistics}}\label{sec:Proof-AntiConcentration}

\subsection{Heuristics}\label{subsec:ProofStrategy-AntiConcentration-OrderStatistics}
Let $\mathcal{L}$ be the Lebesgue measure on $\left(\mathbb{R}, \mathcal{B}(\mathbb{R})\right)$ and $Z \sim P$ be a random variable with density $f_Z = \frac{dP}{d\mathcal{L}}$. We denote the mode (of the density) of $Z$ by
\begin{align*}
	M(Z) = \esssup_{z \in \mathbb{R}} f_{Z}(z).
\end{align*}
The starting point for the proof of Theorem~\ref{theorem:AntiConcentration-OrderStatistics} are the following two observations that relate concentration function, variance, and mode of $Z$~\citep[][]{bobkov2015concentration}: First, for all $\varepsilon > 0$,
\begin{align*} 
	Q(Z, \varepsilon) = \varepsilon M(Z + \varepsilon U)
\end{align*}
where $U \sim \mathrm{Unif}(0,1)$ independent of $Z$. Second, the quantity
\begin{align*} 
	\mathrm{Var}(Z + \varepsilon U) M^2(Z + \varepsilon U)
\end{align*}
is invariant under location and scale transformations of $Z + \varepsilon U$. To derive bounds on the concentration function $Q(Z, \varepsilon)$ it thus suffices to obtain bounds on $\mathrm{Var}(Z + \varepsilon U) M^2(Z + \varepsilon U)$. Since the latter quantity is invariant under re-scaling of $Z + \varepsilon U$, we may assume that $M(Z + \varepsilon U) \equiv 1$. Hence, if for all $\varepsilon > 0$ the law of $Z + \varepsilon U$ belongs to a family $\mathcal{M}$ of absolutely continuous probability measures, then
\begin{align}\label{eq:subsec:Proof-AntiConcentration-OrderStatistics-3}
		\frac{C_1 \varepsilon }{\sqrt{ \mathrm{Var}(Z) + \varepsilon^2/12} } \leq Q(Z, \varepsilon) \leq \frac{C_2 \varepsilon }{\sqrt{ \mathrm{Var}(Z) + \varepsilon^2/12} },
\end{align}
where we have used that $\mathrm{Var}(Z + \varepsilon U) = \mathrm{Var}(Z) + \varepsilon^2/12$ and $C_1, C_2 \geq 0$ satisfy
\begin{align*}
	C_1 &\leq \inf\left\{  \mathrm{Var}(X) : X \sim P, M(X) = 1, P \in \mathcal{M} \right\},\\
	C_2 &\geq \sup\left\{  \mathrm{Var}(X) : X \sim P, M(X) = 1, P \in \mathcal{M} \right\}.
\end{align*}

This reduces the problem of finding bounds on the concentration function of $Z$ to finding bounds on variances that hold uniformly over a particular class of probability measures. To the best of our knowledge, this approach was first developed by~\cite{bobkov2015concentration} in the context of random variables with log-concave distributions. Its success crucially depends on the properties of the law of $Z$ under convolution with a uniform random variable. In the next section, we demonstrate that this approach also succeeds for the class of distributions of order statistics of general $n$-dimensional Gaussian random fields. Since this high-level approach to anti-concentration inequalities is applicable to any statistic, it also opens up the possibility of extending Gaussian approximation and bootstrapping techniques for high-dimensional data to statistics beyond the maximum. We leave this for future research.

\subsection{Details}\label{subsec:Proof-AntiConcentration-OrderStatistics}
We break down the proof of Theorem~\ref{theorem:AntiConcentration-OrderStatistics} in three propositions that, when combined, implement the proof strategy outlined in the preceding section. 

The first proposition is about the density of the order statistics of $n$-dimensional Gaussian random fields with arbitrary correlation structure. When specialized to the maximum of an $n$-dimensional homogeneous Gaussian random field we recover the results by~\cite{ylvisaker1965expected, ylvisaker1968note} and~\cite{chernozhukov2015ComparisonAnti}. For the proof we borrow ideas from the latter paper.

\begin{proposition}\label{proposition:Density-OrderStatistics-Gaussian}
	Let $X = \{X_i : 1 \leq i \leq n\}$ be a Gaussian random field with $\mathbb{E}[X_i] = \mu_i$, $\mathrm{Var}(X_i) = \sigma_i^2 > 0$, and $\mathrm{Cor}(X_i, X_j) < 1$ for all $1 \leq i \neq j \leq n$. Then, the distribution of $X_{(k)}$ is absolutely continuous with respect to the Lebesgue measure $\mathcal{L}$ on $\left(\mathbb{R}, \mathcal{B}(\mathbb{R})\right)$  and has density
	\begin{align*}
		f_{k, n}(z) =  \sum_{i=1}^n\sum_{\substack{J \subseteq [n]\setminus\{i\} \\|J|=n-k}} \sigma_i^{-1}\phi\left( \frac{z- \mu_i}{\sigma_i}\right)\mathbb{P}\left\{
		\begin{array}{lll}
			X_j > z, \:\:\: \forall j \in J,\\
			X_\ell \leq z, \:\:\: \forall \ell \in [n]\setminus (\{i\} \cup J)
		\end{array}
		\Big| \: X_i = z
		\right\},
	\end{align*}
	where $\phi$ denotes the density of the standard normal distribution and $[n] = \{1, \ldots, n\}$.
\end{proposition}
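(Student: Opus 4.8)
The plan is to read off the law of $X_{(k)}$ by disintegrating over the coordinate that realizes it, \emph{without} positing a joint density for $(X_1,\dots,X_n)$: the hypotheses permit $\mathrm{Cov}(X_1,\dots,X_n)$ to be singular (e.g.\ $n=3$, $X_1,X_2$ i.i.d.\ $N(0,1)$, $X_3=(X_1+X_2)/\sqrt2$, where all pairwise correlations are $<1$), so the full vector need not be absolutely continuous even though each $X_i$ is. The only non-degeneracy I will use is that $X_i$ has a density and that, conditionally on $X_i$, each $X_\ell$ ($\ell\ne i$) still has one.

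First I would note that ties are negligible: for $i\ne j$ the variable $X_i-X_j$ is Gaussian with variance $\sigma_i^2+\sigma_j^2-2\sigma_i\sigma_j\,\mathrm{Cor}(X_i,X_j)>(\sigma_i-\sigma_j)^2\ge 0$, so $\mathbb{P}\{X_i=X_j\}=0$ and $N:=\bigcup_{i\ne j}\{X_i=X_j\}$ is $\mathbb{P}$-null. Fix $B\in\mathcal{B}(\mathbb{R})$, and for $i\in[n]$ and $J\subseteq[n]\setminus\{i\}$ with $|J|=n-k$ set
\[
A_{i,J}:=\{X_i\in B\}\cap\bigcap_{j\in J}\{X_j>X_i\}\cap\bigcap_{\ell\in[n]\setminus(\{i\}\cup J)}\{X_\ell<X_i\}.
\]
Since $|[n]\setminus(\{i\}\cup J)|=k-1$, on $A_{i,J}$ exactly $k-1$ of the other coordinates lie below $X_i$, so $X_i=X_{(k)}$ there. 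Off $N$ the values $X_1,\dots,X_n$ are distinct, so the sets $A_{i,J}\setminus N$ are pairwise disjoint and $\{X_{(k)}\in B\}\setminus N=\bigcup_{i,J}(A_{i,J}\setminus N)$, the realizing index and the set of indices strictly above it being uniquely determined off $N$. Hence $\mathbb{P}\{X_{(k)}\in B\}=\sum_{i}\sum_{J}\mathbb{P}(A_{i,J})$.

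Next I would disintegrate each $\mathbb{P}(A_{i,J})$ over $X_i$ by elementary Gaussian regression: write $X_\ell=\mu_\ell+\beta_{i\ell}(X_i-\mu_i)+W_{i\ell}$ with $\beta_{i\ell}=\mathrm{Cov}(X_i,X_\ell)/\sigma_i^2$, where $W_i:=(W_{i\ell})_{\ell\ne i}$ is centered Gaussian and independent of $X_i$. Since $X_i$ has density $\sigma_i^{-1}\phi((\cdot-\mu_i)/\sigma_i)$, Fubini's theorem (using $W_i$ independent of $X_i$) gives
\[
\mathbb{P}(A_{i,J})=\int_B \sigma_i^{-1}\phi\!\left(\frac{z-\mu_i}{\sigma_i}\right)\mathbb{P}\Bigl\{X_j>z\ \forall j\in J,\ X_\ell<z\ \forall\ell\in[n]\setminus(\{i\}\cup J)\,\Big|\,X_i=z\Bigr\}\,dz,
\]
where the conditional probability is the continuous-in-$z$ function obtained by substituting $X_i=z$ into the regression. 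At this point I would invoke the second non-degeneracy input: because $\mathrm{Cor}(X_i,X_\ell)<1$, the conditional variance $\mathrm{Var}(W_{i\ell})=\sigma_\ell^2\bigl(1-\mathrm{Cor}^2(X_i,X_\ell)\bigr)$ is strictly positive, so given $X_i=z$ the law of $X_\ell$ has no atom at $z$, and $\{X_\ell<z\}$ may be replaced by $\{X_\ell\le z\}$ without changing the conditional probability. Summing over $i$ and $J$ then yields $\mathbb{P}\{X_{(k)}\in B\}=\int_B f_{k,n}\,d\mathcal{L}$; as $B\in\mathcal{B}(\mathbb{R})$ was arbitrary, $f_{k,n}\ge 0$, and $\int_{\mathbb{R}}f_{k,n}\,d\mathcal{L}=1$, this is exactly the asserted absolute continuity together with the stated density formula.

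The main obstacle here is conceptual rather than computational: the possible rank-deficiency of $\mathrm{Cov}(X_1,\dots,X_n)$ rules out the naive approach of differentiating a joint-density integral over the relevant region, and is what forces the disintegration over a single, always-nondegenerate coordinate. The two places where the hypothesis $\mathrm{Cor}(X_i,X_j)<1$ is genuinely used --- killing ties, and making the conditional law of $X_\ell$ given $X_i$ non-atomic so that the strict and non-strict inequalities coincide --- should be written out carefully, while the Gaussian regression identity, the use of Fubini, and the continuity in $z$ of the Gaussian orthant probabilities are routine.
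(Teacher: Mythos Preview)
Your approach is different from the paper's and essentially works, but one step is wrong as written. You claim that $\mathrm{Cor}(X_i,X_\ell)<1$ makes the conditional variance $\sigma_\ell^2\bigl(1-\mathrm{Cor}^2(X_i,X_\ell)\bigr)$ strictly positive; this fails when $\mathrm{Cor}(X_i,X_\ell)=-1$, which the hypotheses allow. In that degenerate case $X_\ell$ is almost surely an affine function of $X_i$ with negative slope, so conditional on $X_i=z$ the law of $X_\ell$ is a point mass, and for exactly one value of $z$ that mass sits at $z$ itself. The repair is immediate: over all pairs $(i,\ell)$ this produces only finitely many exceptional $z$, a Lebesgue-null set, so the replacement of $\{X_\ell<z\}$ by $\{X_\ell\le z\}$ is valid for almost every $z$ and the density formula follows. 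The paper runs into the same edge case and handles it the same way, by counting the discontinuity points of its conditional probability $p_z(u)$.

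On the overall route: the paper recovers $f_{k,n}(z)$ as $\lim_{\epsilon\downarrow0}\epsilon^{-1}\mathbb{P}\{z<X_{(k)}\le z+\epsilon\}$, decomposing the event by the number of coordinates landing in $(z,z+\epsilon]$; it must then argue that configurations with two or more coordinates in the window contribute $o(\epsilon)$, and must establish right-continuity in $z$ of the conditional probability before passing to the limit. Your direct partition of $\{X_{(k)}\in B\}$ by the realizing index and the set of indices above it, followed by Fubini, sidesteps both of those tasks. Conversely, the paper's increment argument never needs your preliminary step that ties form a null set---the $o(\epsilon)$ analysis absorbs them. Either route is valid; yours is more economical once the $\mathrm{Cor}=-1$ slip is patched.
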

\begin{remark}
	For independent and identically distributed Gaussian random variables $X_i \sim N(\mu, \sigma^2)$, $1 \leq i \leq n$, above density simplifies to the following well-known expression:
	\begin{align*}
		f_{k, n}(z) = (n/\sigma)  { n-1 \choose n-k } \left(1- \Phi\left(\frac{z-\mu}{\sigma}\right)\right)^{(n-1)-(k-1)}\Phi\left(\frac{z-\mu}{\sigma}\right)^{k-1} \phi\left(\frac{z-\mu}{\sigma}\right),
	\end{align*}
	where $\phi$ and $\Phi$ denote the df and cdf of the standard normal distribution.
\end{remark}

\begin{proof}[Proof of Proposition~\ref{proposition:Density-OrderStatistics-Gaussian}]	
	
	The absolute continuity of the distribution of $X_{(k)}$ with respect to the Lebesgue measure $\mathcal{L}$ on the real line follows from the absolute continuity of the marginals of $X= \{X_i : 1 \leq i \leq n\}$ with respect to $\mathcal{L}$ and the upper bound $\mathbb{P}\left\{X_{(k)} \in A\right\} \leq \sum_{k=1}^n\mathbb{P}\left\{X_k \in A\right\} $ for all Borell sets $A \in \mathcal{B}(\mathbb{R})$. Hence, to show that a version of the density of $X_{(k)}$ is given by $f_{k,n}$, it suffices to show that $\lim_{\epsilon\downarrow 0} \epsilon^{-1}\mathbb{P}\{z < X_{(k)} \leq z + \epsilon\} = f_{k,n}(z)$ for a.e. $z \in \mathbb{R}$.
	
	We have, for every $z \in \mathbb{R}$ and $\epsilon > 0$,
	\begin{align*}
		\{z < X_{(k)} \leq z + \epsilon\} = \bigcup_{i=1}^k A_i^{z, \epsilon},
	\end{align*}
	where, for every $1 \leq i \leq k$,
	\begin{align*}
		A_i^{z, \epsilon} := \left\{ \exists I \subseteq [n], |I| = i, \: \exists J \subseteq [n] \setminus I, |J| = n-k : 
		\begin{array}{lll}
			z < X_j \leq z + \epsilon && \mathrm{if}\:\:  j \in I,\\
			X_j > z  && \mathrm{if}\:\: j \in J,\\
			X_j \leq z && \mathrm{if}\:\: j \in [n] \setminus (I \cup J)
		\end{array}\right\}.
	\end{align*}
	Note that the events $A_1^{z, \epsilon}, \ldots A_k^{z, \epsilon}$ are disjoint. Therefore, we now proceed by showing that $\lim_{\epsilon\downarrow 0} \\ \epsilon^{-1}\mathbb{P}\{A_1^{z, \epsilon}\} =f_{k, n}(z)$ and $\mathbb{P}\{A_i^{z, \epsilon}\} = o(\epsilon)$ for $2 \leq i \leq k$. To this end, we expand $A_1^{z, \epsilon}$ as follows:
	\begin{align*}
		A_1^{z, \epsilon} = \bigcup_{i=1}^n \bigcup_{\substack{J \subseteq [n]\setminus\{i\} \\|J|=n-k}} B_1^{z, \epsilon}(i, J), \quad \text{where} \quad B_1^{z, \epsilon}(i, J) :=  \left\{
		\begin{array}{ll}
			z < X_i \leq z + \epsilon,\\
			X_j > z, \:\:\: \forall j \in J,\\
			X_\ell \leq z, \:\:\: \forall j \in [n] \setminus (\{i\} \cup J)
		\end{array} \right\}.
	\end{align*}
	Since the events $B_1^{z, \epsilon}(i, J)$ are disjoint, we have
	\begin{align*}
		\mathbb{P}\{A_1^{z, \epsilon}\}&= \sum_{i=1}^n \sum_{\substack{J \subseteq [n]\setminus\{i\} \\|J|=n-k}} \mathbb{P}\left\{ B_1^{z, \epsilon}(i, J)\right\}\\
		&= \sum_{i=1}^n \sum_{\substack{J \subseteq [n]\setminus\{i\} \\|J|=n-k}} \int_z^{z + \epsilon} \sigma_i^{-1}\phi\left(\frac{u- \mu_i}{\sigma_i}\right) \mathbb{P}\left\{
		\begin{array}{lll}
			X_j > z, \:\:\: \forall j \in J,\\
			X_\ell \leq z, \:\:\: \forall \ell \in [n]\setminus (\{i\} \cup J)
		\end{array}
		\Big| \: X_i = u
		\right\} d u.
	\end{align*}
	
	For $\lim_{\epsilon \downarrow 0}\epsilon^{-1}\mathbb{P}\{A_1^{z, \epsilon}\}$ to be well-defined, we need to show that, for all $i \in [n]$, all $J \subseteq [n] \setminus\{i\}$, $|J| = n-k$, and a.e. $z \in \mathbb{R}$, the map
	\begin{align*}
		p_z(u) := \mathbb{P}\left\{
		\begin{array}{lll}
			X_j > z, \:\:\: \forall j \in J,\\
			X_\ell \leq z, \:\:\: \forall \ell \in [n]\setminus (\{i\} \cup J)
		\end{array}
		\Big| \: X_i = u
		\right\}
	\end{align*}
	is right continuous at $z \in \mathbb{R}$.
	
	Denote by $	W_{j\setminus i}$ the residual of the orthogonal projection of $X_j- \mu_j$ onto $X_i - \mu_i$, i.e.
	\begin{align*}
		W_{j\setminus i} = X_j - \mu_j - \sigma_i^{-2}\mathbb{E}\left[(X_j-\mu_j)(X_i -\mu_i)\right](X_i - \mu_i) \equiv \sigma_j V_j - \sigma_j\mathbb{E}[V_jV_i]V_i,
	\end{align*}
	where $V_j = \sigma_j^{-1}(X_j - \mu_j)$ and $V_i = \sigma_i^{-1}(X_i - \mu_i)$. By construction $ \{W_{j \setminus i}, W_{\ell \setminus i}\}  \ind V_i$ for all $i \in [n]$, $j \in J$, and $\ell \in [n] \setminus (\{i\} \cup J)$. Whence,
	\begin{align}\label{eq:lemma:Density-OrderStatistics-Gaussian-2}
		p_z(u) &=  \mathbb{P}\left\{
		\begin{array}{ll}
			W_{j \setminus i} > z - \mu_j - \sigma_j\mathbb{E}[V_jV_i]V_i,  \:\:\: \forall j \in J,\\
			W_{\ell \setminus i} \leq z - \mu_\ell - \sigma_j\mathbb{E}[V_\ell V_i]V_i, \:\:\:\forall \ell \in [n]\setminus (\{i\} \cup J)
		\end{array}
		\Big| \:  V_i = \sigma_i^{-1}(u - \mu_i) \right\} \nonumber\\
		&=  \mathbb{P}\left\{
		\begin{array}{ll}
			W_{j \setminus i} > z - \mu_j - (\sigma_j/\sigma_i)\mathbb{E}[V_jV_i](u - \mu_i),  \:\:\: \forall j \in J,\\
			W_{\ell \setminus i} \leq z - \mu_\ell - (\sigma_j/\sigma_i)\mathbb{E}[V_\ell V_i](u - \mu_i), \:\:\:\forall \ell \in [n]\setminus (\{i\} \cup J)
		\end{array}\right\}.
	\end{align}
	
	Now, $W_{j\setminus i}$ and $W_{\ell \setminus i}$ are either degenerate with point mass at 0 (which occurs only if $|\mathrm{Cor}(V_j, V_i)| = 1$ and $|\mathrm{Cor}(V_\ell, V_i)| = 1$) or have a non-degenerate, continuous Gaussian distribution. Clearly, if $W_{j\setminus i}$ or $W_{\ell \setminus i}$ is degenerate, $p_z(u)$ is discontinuous at some $u \in \mathbb{R}$. However, since the event in~\eqref{eq:lemma:Density-OrderStatistics-Gaussian-2} is characterized by $n-1$ linear inequalities (in $u \in \mathbb{R}$), there are at most $n-1$ such discontinuity points. Thus, we conclude that for a.e. $z \in \mathbb{R}$, the map $p_z(u)$ is indeed right-continuous at $z \in \mathbb{R}$. Therefore, for a.e. $z \in \mathbb{R}$,
	\begin{align*}
		\lim_{\epsilon \downarrow 0}\epsilon^{-1}P(A_1^{z, \epsilon}) = \sum_{i=1}^n \sum_{\substack{J \subseteq [n]\setminus\{i\} \\|J|=n-k}} \sigma_i^{-1}\phi\left(\frac{z- \mu_i}{\sigma_i}\right) \mathbb{P}\left\{
		\begin{array}{lll}
			X_j > z, \:\:\: \forall j \in J,\\
			X_\ell \leq z, \:\:\: \forall \ell \in [n]\setminus (\{i\} \cup J)
		\end{array}
		\Big| \:  X_i = z
		\right\}.
	\end{align*}
	
	To complete the proof, we need to show that $\mathbb{P}\{A_i^{z, \epsilon}\} = o(\epsilon)$ for $2 \leq i \leq k$. Fix any $2 \leq i \leq k$. The probability of $\mathbb{P}\{A_i^{z, \epsilon}\}$ is bounded by a sum of terms of the form $\mathbb{P}\{z < X_j \leq z + \epsilon, \: \forall j \in J, \: |J| = i\}$. First, suppose that there exist $j, \ell \in J$ such that $\mathrm{Cor}(X_j, X_\ell) =-1$. Then, for every $z \in \mathbb{R}$ and for $\epsilon > 0$ small enough, $\mathbb{P}\{z < X_j \leq z + \epsilon, \: \forall j \in J, \: |J| = i\} \leq \mathbb{P}\{z < X_j \leq z + \epsilon, \: z  < X_\ell \leq z + \epsilon\} =0$. Next, suppose that $|\mathrm{Cor}(X_j, X_\ell)| < 1$ for all $j, \ell \in J$. Then, $(X_j)_{j \in J}$ follows a $|J|$-dimensional multivariate normal distribution. Hence, for every $z \in \mathbb{R}$ and $\epsilon \downarrow 0 $, $\mathbb{P}\{z < X_j \leq z + \epsilon, \: \forall j \in J, \: |J| = i\} = O(\epsilon^i) = o(\epsilon)$. Lastly, note that the (problematic) case in which there exists $J' \subseteq J$, $|J'| \geq 2 \vee (i-1)$ such that $\mathrm{Cor}(X_j, X_\ell) = 1$ for all $j, \ell \in J'$, is excluded by assumption.	
\end{proof}

The second proposition asserts that the order statistics of $n$-dimensional Gaussian random fields have $s$-concave distributions and probability measures (or laws) for any $s < 0$ and that they preserve this property under convolution with an independent uniform random variable. To formulate this result recall the following definitions.

\begin{definition}[$s$-concave probability measure]\label{definition:s-Concavity-Measure}
	Let $s \in [-\infty, \infty]$. A probability measure $P$ on $\left(\mathbb{R}^n, \mathcal{B}(\mathbb{R}^n)\right)$ is $s$-concave if for all non-empty sets $A, B \in \mathcal{B}(\mathbb{R}^n)$ and for all $\theta \in [0,1]$,
	\begin{align}\label{eq:definition:s-Concavity-Measure-1}
		P(\theta A + (1-\theta)B) \geq  \left( \theta P(A)^s + (1-\theta) P(B)^s\right)^{1/s},
	\end{align}
	where the right hand side of~\eqref{eq:definition:s-Concavity-Measure-1} is interpreted as $P(A) \wedge P(B)$ for $s = -\infty$, $P(A)^\theta P(B)^{1- \theta}$ for $s= 0$, and $P(A) \vee P(B)$ for $s= \infty$.
\end{definition}
\begin{definition}[$s$-concave function]\label{definition:s-Concavity-Function}
	Let $s \in [-\infty, \infty]$. A function $f: \mathbb{R}^n \rightarrow [0, \infty]$ is said to be $s$-concave if for all $x, y \in \mathbb{R}^n$ and for all $\theta \in [0,1]$,
	\begin{align}\label{eq:definition:s-Concavity-Function-1}
		f(\theta x + (1-\theta)y) \geq \left( \theta f^s(x) + (1-\theta) f^s(y)\right)^{1/s},
	\end{align}
	where the right hand side of~\eqref{eq:definition:s-Concavity-Function-1} is interpreted as $f(x) \wedge f(y)$ for $s = -\infty$, $f(x)^\theta f(y)^{1- \theta}$ for $s= 0$, and $f(x) \vee f(y)$ for $s= \infty$.
\end{definition}
\begin{definition}[$s$-concave distribution]\label{definition:s-Concavity-Distribution}
	A random variable $X$ is said to have $s$-concave distribution if it has an $s$-concave density with respect to the Lebesgue measure defined on the affine hull of its support.
\end{definition}

\begin{remark}[Log-concave probability measure, function, and distribution]
	The special case of $0$-concavity is more commonly called \emph{log-concavity}. For a comprehensive review of properties of log-concave probability measures, functions, and distributions we refer to~\cite{saumard2014log-concavity}.
\end{remark}

We can now state the proposition:

\begin{proposition}\label{proposition:S-Concavity-Max-Gaussian}
	Let $X = \{X_i : 1 \leq i \leq n\}$ be a Gaussian random field with $\mathbb{E}[X_i] = \mu_i$, $\mathrm{Var}(X_i) = \sigma_i^2 > 0$, and $\mathrm{Cor}(X_i, X_j) < 1$ for all $1 \leq i \neq j \leq n$.  Let $U$ be a uniform random variable on the interval $I \subset \mathbb{R}$ independent of $X$. Then, $X_{(k)}$ and $X_{(k)} + U$ have $s$-concave distributions and $s$-concave laws for any $s \in [-\infty, 0)$.
\end{proposition}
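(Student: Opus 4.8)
\emph{Proof proposal.} The plan is to combine the explicit density formula of Proposition~\ref{proposition:Density-OrderStatistics-Gaussian} with Pr\'ekopa's theorem and the standard stability properties of $s$-concave measures due to Borell. I would start with two reductions. By Borell's description of $s$-concavity on the line, an absolutely continuous probability measure on $\mathbb{R}$ is $s$-concave as a measure if and only if its Lebesgue density is $s$-concave; hence the ``$s$-concave distribution'' and ``$s$-concave law'' assertions are equivalent and it suffices to treat the density. Moreover $U$ is uniform on an interval, so its law is log-concave, and the convolution of an $s$-concave measure on $\mathbb{R}$ with a log-concave one is again $s$-concave for every $s\le 0$; so the statement for $X_{(k)}+U$ follows from the statement for $X_{(k)}$. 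Everything thus reduces to showing that the density $f_{k,n}$ of $X_{(k)}$ is $s$-concave for every $s\in[-\infty,0)$.

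The next step is to observe that $f_{k,n}$ is a finite sum of log-concave functions. Fix a summand $z\mapsto \sigma_i^{-1}\phi((z-\mu_i)/\sigma_i)\,p(z)$ of the formula in Proposition~\ref{proposition:Density-OrderStatistics-Gaussian}, with $p(z)=\mathbb{P}\{X_j>z\ \forall j\in J,\ X_\ell\le z\ \forall\ell\in[n]\setminus(\{i\}\cup J)\mid X_i=z\}$. Subtracting the regression of $(X_m)_{m\ne i}$ onto $X_i$ rewrites each constraint as a linear inequality between the residual coordinate $W_m$ and an affine function $(1-\beta_m)z+c_m$ of $z$, so $p(z)=\int\mathbf{1}\{(z,w)\in\mathcal{P}\}\,\gamma(w)\,dw$, where $\gamma$ is the (log-concave) Gaussian density of $W=(W_m)_{m\ne i}$ and $\mathcal{P}\subset\mathbb{R}^{n}$ is the polyhedron defined by those inequalities; since they are affine in $(z,w)$, $\mathcal{P}$ is convex and $\mathbf{1}_{\mathcal{P}}$ is log-concave on $\mathbb{R}^{n}$. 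Pr\'ekopa's theorem then gives that $z\mapsto p(z)$ is log-concave, and multiplying by the log-concave Gaussian factor preserves this. Hence $f_{k,n}$ is a finite sum of log-concave functions, and for $X_{(k)}+U$ it is this sum convolved with the (log-concave) density of $U$.

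The hard part is that a finite sum of log-concave functions need not be $s$-concave for \emph{any} $s$, so log-concavity of the individual summands is not enough: one has to exploit how they are glued together by the ordering. I would try one of two routes. (i) Partition $\mathbb{R}^{n}$ into the $n!$ ordering cells on each of which $X_{(k)}$ coincides with a fixed coordinate; each cell is a convex polyhedral cone, the restriction of the (log-concave) Gaussian law to it is again log-concave, and its push-forward under the corresponding coordinate is a log-concave sub-probability measure on $\mathbb{R}$, with $f_{k,n}$ the sum of these; one then has to show that the monotone nestedness of the cells (so that, as $z$ varies, the one-dimensional pieces overlap in an orderly way) forces the sum to be $s$-concave with an explicit $s$, which is where Borell's stability lemmas and a variational description of the extremal $s$ would enter. (ii) Induct on $n$: condition on the index of the maximal coordinate, express $X_{(k)}$ of the $n$-vector through the $k$-th and $(k-1)$-th order statistics of $(n-1)$-subvectors, and propagate $s$-concavity across the conditioning (again with a Pr\'ekopa step for the Gaussian regression). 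I expect this assembly step --- upgrading log-concavity of the pieces to $s$-concavity of $f_{k,n}$ with the correct value of $s$ --- to be the technical core; the reductions and the Pr\'ekopa computations above are routine.
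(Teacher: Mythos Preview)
Your reductions and the Pr\'ekopa/Borell argument showing that each summand of $f_{k,n}$ is log-concave coincide with the paper's proof. For $X_{(k)}+U$ the paper convolves each log-concave summand with the uniform density before summing (via Lemma~\ref{lemma:PreservationLogConcavity-Convolution}) rather than appealing to a general ``$s$-concave $\ast$ log-concave $= s$-concave'' rule; this is only a reorganization.

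The divergence is precisely at your ``hard part.'' The paper dispatches it with Lemma~\ref{lemma:S-Concavity-Sum}: for any fixed $s<0$, a finite sum of $s$-concave functions on $\mathbb{R}$ is again $s$-concave, proved by two applications of Jensen. No special structure of the order-statistic decomposition is used. Your suspicion, however, is correct and that lemma is false. Take $s=-1$, $f_1\equiv 1$, $f_2(t)=1/(1+|t|)$; both have convex reciprocals, but $1/(f_1+f_2)=(1+|t|)/(2+|t|)=1-1/(2+|t|)$ has strictly negative second derivative on $(0,\infty)$, so $f_1+f_2$ is not $(-1)$-concave. In the paper's argument, step~(c) applies Jensen to the convex map $x\mapsto x^s$, but the outer power $1/s<0$ reverses the resulting inequality and the chain of $\geq$'s breaks. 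Note also that ``$s$-concave density for every $s\in[-\infty,0)$'' is equivalent, by letting $s\uparrow 0$, to log-concavity of $f_{k,n}$; so the proposition as phrased is a log-concavity claim in disguise, and neither the paper's lemma nor your sketches (i)--(ii) establish it. Theorem~\ref{theorem:AntiConcentration-OrderStatistics} only uses $s$-concavity of $f_{k,n}$ and of its uniform smoothing for one fixed $s\in(-1/3,0)$; your routes should target that weaker statement, but they will need a genuine argument exploiting the specific Gaussian structure, since the generic ``sum of log-concave'' mechanism is blocked.
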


The proof of this proposition requires several auxiliary lemmas. The first lemma clarifies the relation between an $s$-concave probability measure (or law) and an $s$-concave distribution, the other four lemmas are on monotonicity and preservation properties of $s$-concave functions.

\begin{lemma}[Theorem 3.1,~\citeauthor{borell1975convex},~\citeyear{borell1975convex}]\label{lemma:Borell}
	Let $s \in [-\infty, 1/n]$, $\mathcal{L}^n$ be the Lebesgue measure on $\left(\mathbb{R}^n, \mathcal{B}(\mathbb{R}^n)\right)$, and $P$ be a probability measure such that the affine hull of $\mathrm{supp}(P)$ has dimension $n$. $P$ is an $s$-concave probability measure on $\left(\mathbb{R}^n, \mathcal{B}(\mathbb{R}^n)\right)$ if and only if the density $f = \frac{d P}{d\mathcal{L}^n}$ is $\gamma_n(s)$-concave, where
	\begin{align*}
		\gamma_n(s) = \begin{cases}
			-1/n &\mathrm{if} \: s = -\infty\\
			s/(1-sn) &\mathrm{if} \: s \in (-\infty, 1/n)\\
			\infty &\mathrm{if} \: s = 1/n.	
		\end{cases}
	\end{align*}
\end{lemma}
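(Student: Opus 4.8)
I would prove the two implications separately: ``$f := dP/d\mathcal{L}^n$ is $\gamma_n(s)$-concave $\Rightarrow$ $P$ is $s$-concave'' via the Borell--Brascamp--Lieb inequality, and the converse via a differentiation argument at Lebesgue points. Throughout write $M_p^\theta(a,b)$, $a,b\geq 0$, for the $\theta$-weighted power mean of exponent $p\in[-\infty,\infty]$, with the usual conventions $M_0^\theta(a,b)=a^\theta b^{1-\theta}$, $M_{\pm\infty}^\theta=\max/\min$, and $M_p^\theta(a,b)=0$ when $ab=0$ and $p\leq 0$; then the right-hand sides of~\eqref{eq:definition:s-Concavity-Measure-1} and~\eqref{eq:definition:s-Concavity-Function-1} are $M_s^\theta(P(A),P(B))$ and $M_s^\theta(f(x),f(y))$. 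The only arithmetic needed is that $s\mapsto\gamma_n(s)$ is increasing with $\gamma_n([-\infty,1/n])=[-1/n,\infty]$ and $1+n\gamma_n(s)=(1-sn)^{-1}$, so that $\gamma_n(s)/(1+n\gamma_n(s))=s$ for $s\in(-\infty,1/n)$.

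\emph{First implication.} Fix nonempty $A,B\in\mathcal{B}(\mathbb{R}^n)$ and $\theta\in(0,1)$, and apply the Borell--Brascamp--Lieb inequality (valid for exponents in $[-1/n,\infty]$) to $f\mathbf{1}_A$, $f\mathbf{1}_B$, $f\mathbf{1}_{\theta A+(1-\theta)B}$ with exponent $p=\gamma_n(s)$. Its pointwise hypothesis need be checked only at $x,y$ with $(f\mathbf{1}_A)(x)>0$ and $(f\mathbf{1}_B)(y)>0$, i.e.\ $x\in A$, $y\in B$, where it is precisely the $\gamma_n(s)$-concavity of $f$ together with $\theta x+(1-\theta)y\in\theta A+(1-\theta)B$. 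Borell--Brascamp--Lieb then gives $P(\theta A+(1-\theta)B)=\int f\mathbf{1}_{\theta A+(1-\theta)B}\geq M_{p/(1+np)}^\theta(P(A),P(B))=M_s^\theta(P(A),P(B))$, which is~\eqref{eq:definition:s-Concavity-Measure-1}; the endpoints $s=-\infty$ ($p=-1/n$) and $s=1/n$ ($p=\infty$) are the $\min$-version and the Brunn--Minkowski inequality.

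\emph{Converse.} Suppose $P$ is $s$-concave with $\dim(\mathrm{aff}\,\mathrm{supp}\,P)=n$. Two preliminary facts, both from Borell's structure theory of convex measures: applying~\eqref{eq:definition:s-Concavity-Measure-1} to balls around support points shows $\mathrm{supp}\,P$ is convex, and $P\ll\mathcal{L}^n$; fix a density $f$ and put $\Omega=\mathrm{int}\,\mathrm{supp}\,P$, a nonempty open convex set. For $x_0,y_0\in\Omega$, $\theta\in(0,1)$, $z_0=\theta x_0+(1-\theta)y_0$, and $\alpha,\beta>0$, apply~\eqref{eq:definition:s-Concavity-Measure-1} to the Euclidean balls of radius $t\alpha$ about $x_0$ and of radius $t\beta$ about $y_0$, whose $\theta$-combination is the ball of radius $t(\theta\alpha+(1-\theta)\beta)$ about $z_0$; divide through by $\omega_n t^n$ (with $\omega_n$ the volume of the unit ball), use the degree-one homogeneity of the power mean, and send $t\downarrow 0$. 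For each fixed $\theta$, at $\mathcal{L}^{2n}$-a.e.\ $(x_0,y_0)$ the three points $x_0,y_0,z_0$ are Lebesgue points of $f$, and there Lebesgue differentiation yields
\begin{align*}
  f(z_0)\,\big(\theta\alpha+(1-\theta)\beta\big)^n\;\geq\;M_s^\theta\!\big(f(x_0)\alpha^n,\,f(y_0)\beta^n\big)\qquad\text{for all }\alpha,\beta>0.
\end{align*}
Maximizing the right side over $\alpha,\beta$ with $\theta\alpha+(1-\theta)\beta=1$ is a one-line Lagrange computation whose value is $M_{\gamma_n(s)}^\theta(f(x_0),f(y_0))$ (the optimal ratio is $\alpha/\beta=(f(x_0)/f(y_0))^{\gamma_n(s)}$, and one uses $s/(sn-1)=-\gamma_n(s)$). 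Hence $f$ obeys the $\gamma_n(s)$-concavity inequality $\mathcal{L}^{2n}$-a.e.; a standard regularization (replace $f$ by a genuinely $\gamma_n(s)$-concave version agreeing with it $\mathcal{L}^n$-a.e.) makes it hold for all $x_0,y_0$, hence on all of $\mathbb{R}^n$ --- on $\mathrm{supp}\,P$ by continuity of that version, and off it trivially since $f$ vanishes there.

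\emph{Main obstacle.} Granting Borell--Brascamp--Lieb as known (it is proved by the Pr\'ekopa--Leindler-type induction on dimension), the real work is the regularity input for the converse: that an $s$-concave measure with full-dimensional support is absolutely continuous --- most delicate at $s=-\infty$, where no integrability of a density is available and one must argue directly from convexity of the set function --- and the a.e.-to-everywhere upgrade, i.e.\ selecting the correct (upper semicontinuous on $\Omega$) version of $f$ so that $\gamma_n(s)$-concavity holds up to and on $\partial\Omega$. The exponent bookkeeping relating $s$ to $\gamma_n(s)$ is, by contrast, entirely elementary.
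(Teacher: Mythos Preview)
The paper does not prove this lemma at all: it is stated with attribution to Borell (Theorem~3.1 in the 1975 paper) and used as a black box, so there is no ``paper's own proof'' to compare against. Your sketch is a faithful outline of the classical argument---the forward direction via the Borell--Brascamp--Lieb inequality and the converse via Lebesgue differentiation on shrinking balls followed by optimization over the radii---and you have correctly identified the genuinely delicate points (absolute continuity of $P$ when the support is full-dimensional, and upgrading the a.e.\ inequality to a pointwise one by passing to the right representative of $f$). For the purposes of this paper no proof is expected; if you wish to include one, your outline is sound, though you should be explicit that the BBL inequality you invoke is itself a nontrivial input whose standard proof proceeds by induction on dimension from the one-dimensional case.
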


\begin{lemma}\label{lemma:S-Concavity-Monotonicity-in-S}
	Let $f : \mathbb{R}^n \rightarrow [0, \infty]$ be an $s$-concave function. Then, $f$ is also an $r$-concave function for all $-\infty \leq r \leq s \leq \infty$.
\end{lemma}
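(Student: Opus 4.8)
The plan is to recognize Lemma~\ref{lemma:S-Concavity-Monotonicity-in-S} as a pointwise instance of the classical monotonicity of weighted power means. Fix $x, y \in \mathbb{R}^n$ and $\theta \in [0,1]$, set $a = f(x)$ and $b = f(y)$, and for $p \in [-\infty, \infty]$ recall the weighted power mean $M_p(a,b;\theta)$, equal to $\min\{a,b\}$ for $p = -\infty$, to $a^\theta b^{1-\theta}$ for $p = 0$, to $\max\{a,b\}$ for $p = \infty$, and to $(\theta a^p + (1-\theta)b^p)^{1/p}$ otherwise, with the usual conventions $0^p = \infty$ and $(\infty)^{1/p} = 0$ for $p < 0$, $0^p = 0$ for $p > 0$, and $0 \cdot \infty = 0$. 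With this notation, $s$-concavity of $f$ at $(x,y,\theta)$ is exactly the inequality $f(\theta x + (1-\theta)y) \geq M_s(a,b;\theta)$, and $r$-concavity is the same statement with $M_r$ in place of $M_s$. Hence the lemma follows at once from the single scalar fact that $M_r(a,b;\theta) \leq M_s(a,b;\theta)$ whenever $-\infty \leq r \leq s \leq \infty$.

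To prove this scalar fact I would first dispose of the degenerate configurations: if $\theta \in \{0,1\}$ or $a = b$, both means collapse to the surviving value; and if $\min\{a,b\} = 0$ or $\max\{a,b\} = \infty$, the verification reduces to elementary comparisons of powers of numbers in $[0,1]$ and $[1,\infty]$ under the conventions above. This leaves the main case $0 < a \neq b < \infty$ and $\theta \in (0,1)$.

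For the main case I would split on the signs of $r$ and $s$. When $0 < r \leq s < \infty$, apply Jensen's inequality to the convex function $t \mapsto t^{s/r}$ (convex since $s/r \geq 1$) and the two-point law putting mass $\theta$ on $a^r$ and mass $1-\theta$ on $b^r$, obtaining $(\theta a^r + (1-\theta)b^r)^{s/r} \leq \theta a^s + (1-\theta)b^s$; raising both sides to the power $1/s > 0$ yields $M_r \leq M_s$. When $r \leq s < 0$, the exponent $s/r$ lies in $(0,1]$, so $t \mapsto t^{s/r}$ is concave and Jensen reverses; raising to the power $1/s < 0$ reverses the inequality once more, again giving $M_r \leq M_s$. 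The remaining case $r < 0 < s$ is handled by interposing $M_0$ and combining $M_r \leq M_0$ with $M_0 \leq M_s$, each of which follows from the previous two cases together with $M_0 = \lim_{p \to 0} M_p$ (or directly from Jensen applied to $\log$ and $\exp$). Finally $r = -\infty$ or $s = \infty$ follow by passing to the monotone limits, since $\theta a^p + (1-\theta)b^p$ is squeezed between $\min\{a,b\}^p$ and $\max\{a,b\}^p$.

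There is no substantial obstacle here: the argument is a bookkeeping exercise around the boundary exponents $p \in \{0, \pm\infty\}$, the boundary values $a,b \in \{0,\infty\}$, and the sign of $1/p$, which flips the direction of the final exponentiation. If brevity is preferred, one may instead simply invoke the power-mean inequality from the classical literature on means (e.g., Hardy, Littlewood, and P\'olya) and note the reduction carried out in the first paragraph.
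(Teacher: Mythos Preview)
Your proposal is correct and takes essentially the same approach as the paper: both reduce the lemma to the monotonicity of weighted power means, which the paper simply cites (``Apply Definition~\ref{definition:s-Concavity-Function} and the power mean inequality'') while you additionally sketch its proof. Your final remark about invoking Hardy--Littlewood--P\'olya is precisely what the paper does.
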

\begin{proof}[Proof of Lemma~\ref{lemma:S-Concavity-Monotonicity-in-S}]
	Apply Definition~\ref{definition:s-Concavity-Function} and the power mean inequality.
\end{proof}

\begin{lemma}[Theorem 4.4,~~\citeauthor{borell1975convex},~\citeyear{borell1975convex}]\label{lemma:PreservationLogConcavity-Convolution}
	Let $f, g : \mathbb{R}^n \rightarrow [0, \infty]$ be log-concave functions. Then, the convolution $f \ast g$  is a log-concave function, too.
\end{lemma}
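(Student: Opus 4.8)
The plan is to reduce the statement to the Pr{\'e}kopa--Leindler inequality (equivalently, via a layer-cake argument, to the Brunn--Minkowski inequality), which is the functional form of exactly the fact we want. Write $h = f \ast g$, so that $h(x) = \int_{\mathbb{R}^n} f(y)\, g(x-y)\, dy$ is well-defined in $[0,\infty]$ for a.e.\ $x$ by Tonelli's theorem. Log-concavity of $h$ in the sense of Definition~\ref{definition:s-Concavity-Function} with $s=0$ means that for every $x_0, x_1 \in \mathbb{R}^n$ and every $\lambda \in (0,1)$ one has $h(\lambda x_0 + (1-\lambda) x_1) \geq h(x_0)^\lambda h(x_1)^{1-\lambda}$, with the usual convention $0 \cdot \infty = 0$; it suffices to verify this pointwise inequality, which then automatically refers to the a.e.-defined version of the convolution.

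First I would fix $x_0, x_1, \lambda$ and introduce three functions of the integration variable $y \in \mathbb{R}^n$, namely $u(y) = f(y)\, g(x_0 - y)$, $v(y) = f(y)\, g(x_1 - y)$, and $w(y) = f(y)\, g\big(\lambda x_0 + (1-\lambda) x_1 - y\big)$. These are Borel measurable (products of Borel functions), and by construction $\int_{\mathbb{R}^n} u = h(x_0)$, $\int_{\mathbb{R}^n} v = h(x_1)$, and $\int_{\mathbb{R}^n} w = h(\lambda x_0 + (1-\lambda) x_1)$. Second, I would check the Pr{\'e}kopa--Leindler hypothesis: for all $y_0, y_1 \in \mathbb{R}^n$,
\begin{align*}
w\big(\lambda y_0 + (1-\lambda) y_1\big) = f\big(\lambda y_0 + (1-\lambda) y_1\big)\, g\big(\lambda(x_0 - y_0) + (1-\lambda)(x_1 - y_1)\big) \geq u(y_0)^\lambda\, v(y_1)^{1-\lambda},
\end{align*}
where the equality is a direct computation (the weighted midpoints of the arguments of $f$ and of $g$ recombine correctly), and the inequality follows by applying log-concavity of $f$ to the first factor, log-concavity of $g$ to the second factor, and multiplying the two. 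Pr{\'e}kopa--Leindler then gives $\int w \geq (\int u)^\lambda (\int v)^{1-\lambda}$, i.e.\ $h(\lambda x_0 + (1-\lambda) x_1) \geq h(x_0)^\lambda h(x_1)^{1-\lambda}$, which is the claim.

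The substantive content is entirely the Pr{\'e}kopa--Leindler / Brunn--Minkowski step, and that is where the difficulty lies if one wants a self-contained argument. One clean route: the pointwise inequality above yields, for every level $\alpha > 0$, the Minkowski-sum inclusion of superlevel sets $\{w \geq \alpha\} \supseteq \lambda \{u \geq \alpha\} + (1-\lambda)\{v \geq \alpha\}$; applying Brunn--Minkowski in the form $|\lambda A + (1-\lambda) B| \geq |A|^\lambda |B|^{1-\lambda}$ and then integrating in $\alpha$ via the layer-cake identity together with H\"older's inequality gives $\int w \geq (\int u)^\lambda (\int v)^{1-\lambda}$. Brunn--Minkowski itself is proved by induction on $n$, the base case $n=1$ being the elementary interval-addition estimate $|A + B| \geq |A| + |B|$ for $A, B \subset \mathbb{R}$ (reduce to finite unions of intervals, translate them to be essentially disjoint). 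The remaining care is routine bookkeeping with the values $0$ and $\infty$ and with the a.e.-definedness of $h$; none of this affects the one-line reduction once Pr{\'e}kopa--Leindler is available, which is why the statement is customarily just attributed to Borell (Theorem 4.4) / Pr{\'e}kopa.
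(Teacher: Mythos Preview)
The paper does not prove this lemma; it simply quotes it as Theorem~4.4 of Borell (1975). Your reduction to the Pr{\'e}kopa--Leindler inequality is correct and is the standard route: the verification that $w(\lambda y_0 + (1-\lambda)y_1) \geq u(y_0)^\lambda v(y_1)^{1-\lambda}$ is clean, and once Pr{\'e}kopa--Leindler is granted the conclusion is immediate.

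Your parenthetical sketch of how to derive Pr{\'e}kopa--Leindler from Brunn--Minkowski, however, has a gap: H\"older's inequality points the wrong way. From the superlevel-set inclusion and Brunn--Minkowski you obtain
\[
\int w \;\geq\; \int_0^\infty |\{u > \alpha\}|^{\lambda}\, |\{v > \alpha\}|^{1-\lambda}\, d\alpha,
\]
but H\"older only yields $\int_0^\infty |\{u > \alpha\}|^{\lambda} |\{v > \alpha\}|^{1-\lambda}\, d\alpha \leq \big(\int u\big)^{\lambda} \big(\int v\big)^{1-\lambda}$, which is the reverse of what you need. The usual self-contained proof of Pr{\'e}kopa--Leindler instead proceeds by induction on the dimension, with the one-dimensional base case handled directly (e.g.\ via a transport or rearrangement argument), and then Fubini for the inductive step. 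Since your main argument invokes Pr{\'e}kopa--Leindler only as a black box, this does not affect the correctness of the lemma's proof; just drop or repair that side remark.
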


\begin{lemma}\label{lemma:PreservationLogConcavity-Function-Product}
	Let $f_1, \ldots, f_N : \mathbb{R}^n \rightarrow \mathbb{R}$ be log-concave functions. Then the product $\prod_{i=1}^N f_i :\mathbb{R}^n \rightarrow \mathbb{R}$ is a log-concave function, too.
\end{lemma}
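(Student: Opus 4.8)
The statement is elementary, and the plan is to reduce to the case $N = 2$ and then argue directly from Definition~\ref{definition:s-Concavity-Function} with $s = 0$. First I would observe that it suffices to treat $N = 2$: once the product of two log-concave functions is known to be log-concave, writing $\prod_{i=1}^N f_i = \big(\prod_{i=1}^{N-1} f_i\big)\cdot f_N$ and inducting on $N$ (with the trivial base case $N = 1$) gives the general claim.

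For $N = 2$, fix log-concave $f, g : \mathbb{R}^n \to [0,\infty]$, points $x, y \in \mathbb{R}^n$, and $\theta \in [0,1]$. By Definition~\ref{definition:s-Concavity-Function} applied with $s = 0$,
\[
f(\theta x + (1-\theta) y) \geq f(x)^\theta f(y)^{1-\theta}, \qquad g(\theta x + (1-\theta) y) \geq g(x)^\theta g(y)^{1-\theta}.
\]
Both sides of each inequality are nonnegative, so multiplying the two inequalities preserves the direction and yields
\[
(fg)(\theta x + (1-\theta) y) \geq \big(f(x) g(x)\big)^\theta \big(f(y) g(y)\big)^{1-\theta} = (fg)(x)^\theta (fg)(y)^{1-\theta},
\]
which is exactly $0$-concavity, i.e.\ log-concavity, of $fg$. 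The only point requiring a word of care is the handling of the extended values $0$ and $+\infty$: under the convention attached to Definition~\ref{definition:s-Concavity-Function} the geometric-mean right-hand side of the $s = 0$ inequality is read as $0$ whenever one of the base values vanishes (and analogously for $+\infty$), so the multiplied inequality remains valid in all degenerate cases as well.

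There is no genuine obstacle here; the result is a one-line consequence of the definition together with monotonicity of multiplication on the nonnegative reals. The only things worth stating carefully are the degenerate-value conventions so that the displayed inequalities are unambiguous, and the fact that this lemma concerns the \emph{pointwise} product — not to be conflated with Lemma~\ref{lemma:PreservationLogConcavity-Convolution} on convolutions, which is the substantially deeper statement.
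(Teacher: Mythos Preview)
Your proposal is correct and takes essentially the same approach as the paper: the paper's proof consists of the single word ``Trivial,'' and you have simply spelled out that trivial argument from Definition~\ref{definition:s-Concavity-Function} with $s=0$.
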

\begin{proof}[Proof of Lemma~\ref{lemma:PreservationLogConcavity-Function-Product}]
	Trivial.
\end{proof}

\begin{lemma}\label{lemma:S-Concavity-Sum}
	Let $f_1, \ldots, f_n: \mathbb{R} \rightarrow [0,\infty]$ be $s$-concave functions for some $s \in [-\infty, 0)$. Then, $f : = \sum_{i=1}^n f_i$ is an $s$-concave function, too.
\end{lemma}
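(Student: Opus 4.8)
The plan is to prove the case $n = 2$ and then iterate. The base case $n = 1$ is trivial, and if the two-summand statement is available, writing $\sum_{i=1}^n f_i = \left(\sum_{i=1}^{n-1} f_i\right) + f_n$ and invoking the induction hypothesis (so that $\sum_{i=1}^{n-1} f_i$ is $s$-concave) reduces the whole lemma to showing: if $g, h : \mathbb{R} \to [0,\infty]$ are $s$-concave, then $g + h$ is $s$-concave.

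To prove this, fix $x, y \in \mathbb{R}$, $\theta \in [0,1]$, and set $z = \theta x + (1-\theta) y$. Write $M_s(a,b) = \left(\theta a^s + (1-\theta) b^s\right)^{1/s}$ for $s \in (-\infty, 0)$, with the conventions $0^s := +\infty$ and $(+\infty)^{1/s} := 0$, and $M_{-\infty}(a,b) := a \wedge b$. Applying the definition of $s$-concavity to $g$ and to $h$ separately gives
\begin{equation*}
	(g+h)(z) = g(z) + h(z) \geq M_s(g(x), g(y)) + M_s(h(x), h(y)),
\end{equation*}
so it suffices to establish the Minkowski-type inequality
\begin{equation}\label{eq:lemma:S-Concavity-Sum-star}
	M_s(a_1, b_1) + M_s(a_2, b_2) \geq M_s(a_1 + a_2,\, b_1 + b_2), \qquad a_1, a_2, b_1, b_2 \in [0, \infty],
\end{equation}
and then apply \eqref{eq:lemma:S-Concavity-Sum-star} with $(a_1, b_1) = (g(x), g(y))$ and $(a_2, b_2) = (h(x), h(y))$, using that $a_1 + a_2 = (g+h)(x)$ and $b_1 + b_2 = (g+h)(y)$.

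It then remains to prove \eqref{eq:lemma:S-Concavity-Sum-star}. For $s = -\infty$ this is an elementary fact about minima, and the degenerate cases where some argument equals $0$ or $+\infty$ follow directly from the conventions above. For $s \in (-\infty, 0)$ one recognizes \eqref{eq:lemma:S-Concavity-Sum-star}, after the substitution $u_i = \theta^{1/s} a_i$, $v_i = (1-\theta)^{1/s} b_i$, as the negative-exponent (reverse) form of Minkowski's inequality for the functional $(a,b) \mapsto (a^s + b^s)^{1/s}$ on $(0,\infty)^2$, which I would deduce from the positive $1$-homogeneity of that functional together with the appropriate homogeneity/concavity properties of the $\ell^s$-gauge for $s < 0$, passing to the boundary of $[0,\infty]^2$ by monotone limits.

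The step I expect to be the main obstacle is precisely \eqref{eq:lemma:S-Concavity-Sum-star}: the direction of Minkowski's inequality flips once the exponent leaves the range $[1,\infty)$, so one has to be careful about which way the power mean $M_s(\cdot,\cdot)$ behaves under addition of its two arguments when $s < 0$, and to check that the inequality still points the right way after the reduction; moreover the mixed boundary cases (one argument $0$, the other positive) are exactly what encode how the supports of $g$ and $h$ combine, and these must be treated explicitly rather than by a density argument. Once \eqref{eq:lemma:S-Concavity-Sum-star} is in hand, the two-summand case and hence the lemma follow immediately.
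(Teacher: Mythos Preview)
Your reduction to two summands is sound, but the Minkowski-type inequality you isolate as the crux,
\[
M_s(a_1,b_1)+M_s(a_2,b_2)\ \ge\ M_s(a_1+a_2,\,b_1+b_2),
\]
fails for $s<0$. The weighted power mean $M_s(a,b)=\bigl(\theta a^{s}+(1-\theta)b^{s}\bigr)^{1/s}$ is \emph{concave} and positively $1$-homogeneous on $(0,\infty)^{2}$ whenever $s\le 1$, hence superadditive, which yields exactly the reverse inequality. Concretely, with $s=-1$, $\theta=\tfrac12$, $(a_1,b_1)=(1,2)$, $(a_2,b_2)=(2,1)$ one gets $M_{-1}(1,2)+M_{-1}(2,1)=\tfrac43+\tfrac43=\tfrac83$, while $M_{-1}(3,3)=3>\tfrac83$. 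Your own caveat about the direction of Minkowski for exponents outside $[1,\infty)$ is exactly on point---the inequality goes the wrong way and cannot be salvaged by the homogeneity/concavity route you outline, since that route produces superadditivity, not subadditivity.

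This is not a defect of your strategy alone: the lemma as stated is false. Take $f_1(t)=e^{t}$ and $f_2(t)=e^{-t}$, both log-concave and hence $(-1)$-concave; their sum $2\cosh t$ satisfies $(f_1+f_2)(\pm 1)=e+e^{-1}>3$ but $(f_1+f_2)(0)=2$, so at $x=-1$, $y=1$, $\theta=\tfrac12$ the $(-1)$-concavity inequality would demand $2\ge e+e^{-1}$, which is false. The paper's own proof has the analogous flaw: in the step labelled~(c), Jensen's inequality for the convex map $x\mapsto x^{s}$ does give $\sum_i f_i^{s}\ge n^{1-s}\bigl(\sum_i f_i\bigr)^{s}$, but raising both sides to the power $1/s<0$ \emph{reverses} the inequality, so the displayed chain of inequalities breaks at that line.
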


\begin{proof}[Proof of Lemma~\ref{lemma:S-Concavity-Sum}]
	Let $x,y \in \mathbb{R}$, $\theta \in [0,1]$, and $s \in (-\infty, 0)$ be arbitrary and compute
	\begin{align*}
		f(\theta x + (1-\theta)y) &= \sum_{i=1}^n f_i(\theta x + (1-\theta)y)\\
		&\overset{(a)}{\geq} \sum_{i=1}^n \Big( \theta f_i^s(x) + (1- \theta) f_i^s(y) \Big)^{1/s}\\
		&\overset{(b)}{\geq} \frac{n}{n^{1/s}}\left(\sum_{i=1}^n \theta f_i^s(x) + (1- \theta) f_i^s(y)\right)^{1/s}\\
		&\overset{(c)}{\geq} \frac{n^{1 + 1/s}}{n^{1/s+1}}\left(\theta \left[\sum_{i=1}^n f_i(x)\right]^s  + (1- \theta) \left[\sum_{i=1}^n f_i(y) \right]^s \right)^{1/s}\\
		&= \left( \theta f^s(x) + (1-\theta) f^s(y)\right)^{1/s},
	\end{align*}
	where (a) holds because each $f_i$ is $s$-concave, and (b) and (c) follow from Jensen's inequality and the convexity of the maps $x \mapsto x^{1/s}$ and $x \mapsto x^s$ for $x  \geq 0$ and $s < 0$. The case $s = -\infty$ follows from similar calculations.
\end{proof}

\begin{proof}[Proof of Proposition~\ref{proposition:S-Concavity-Max-Gaussian}]
We begin with establishing the claims about the distribution and the law of $X_{(k)}$.
Denote by $	W_{j\setminus i}$ the residual of the orthogonal projection of $X_j- \mu_j$ onto $X_i - \mu_i$, i.e.
\begin{align*}
	W_{j\setminus i} = X_j - \mu_j - \sigma_i^{-2}\mathbb{E}\left[(X_j-\mu_j)(X_i -\mu_i)\right](X_i - \mu_i) \equiv \sigma_j V_j - \sigma_j\mathbb{E}[V_jV_i]V_i,
\end{align*}
where $V_j = \sigma_j^{-1}(X_j - \mu_j)$ and $V_i = \sigma_i^{-1}(X_i - \mu_i)$. By construction $ \{W_{j \setminus i}, W_{\ell \setminus i}\}  \ind V_i$ for all $i \in [n]$, $j \in J$, and $\ell \in [n] \setminus (\{i\} \cup J)$. By construction $ \{W_{j \setminus i}, W_{\ell \setminus i}\}  \ind V_i$ for all $i \in [n]$, $j \in J$, and $\ell \in [n] \setminus (\{i\} \cup J)$. Hence, the density of $X_{(k)}$ given in Proposition~\ref{proposition:Density-OrderStatistics-Gaussian} can be written equivalently as
\begin{align*}
	&f_{k, n}(z) \\
	&=  \sum_{i=1}^n\sum_{\substack{J \subseteq [n]\setminus\{i\} \\|J|=n-k}} \sigma_i^{-1}\phi\left( \frac{z- \mu_i}{\sigma_i}\right) \mathbb{P}\left\{
	\begin{array}{ll}
		W_{j \setminus i} > z - \mu_j - (\sigma_j/\sigma_i)\mathbb{E}[V_jV_i](u - \mu_i),  \:\:\: \forall j \in J,\\
		W_{\ell \setminus i} \leq z - \mu_\ell - (\sigma_j/\sigma_i)\mathbb{E}[V_\ell V_i](u - \mu_i), \:\:\:\forall \ell \in [n]\setminus (\{i\} \cup J)
	\end{array}\right\}.
\end{align*}
Since $X = (X_1, \ldots, X_n)$ is jointly normally distributed, the collection $W_{\setminus i} := \left\{W_{j\setminus i}: \forall j \neq i\right\}$ is also jointly normally distributed for every $1 \leq i \leq n$. Thus, Lemma~\ref{lemma:Borell} applied to $s= 0$ and $P = \mathbb{P} \circ  W_{ \setminus i}^{-1}$ implies that the maps
\begin{align*}
	z \mapsto  \mathbb{P}\left\{
	\begin{array}{ll}
		W_{j \setminus i} > z - \mu_j - (\sigma_j/\sigma_i)\mathbb{E}[V_jV_i](u - \mu_i),  \:\:\: \forall j \in J,\\
		W_{\ell \setminus i} \leq z - \mu_\ell - (\sigma_j/\sigma_i)\mathbb{E}[V_\ell V_i](u - \mu_i), \:\:\:\forall \ell \in [n]\setminus (\{i\} \cup J)
	\end{array}\right\}, \:\: i \in [n], \:\: J \subset [n] \setminus \{i\},
\end{align*}
are log-concave. Hence, by Lemmas~\ref{lemma:S-Concavity-Monotonicity-in-S} and~\ref{lemma:PreservationLogConcavity-Function-Product} the maps
\begin{align}\label{eq:lemma:S-Concavity-Max-Gaussian-1}
	\begin{split}
		z \mapsto  \sigma_i^{-1}\phi\left( \frac{z- \mu_i}{\sigma_i}\right)  \mathbb{P}\left\{
		\begin{array}{ll}
			W_{j \setminus i} > z - \mu_j - (\sigma_j/\sigma_i)\mathbb{E}[V_jV_i](u - \mu_i),  \:\:\: \forall j \in J,\\
			W_{\ell \setminus i} \leq z - \mu_\ell - (\sigma_j/\sigma_i)\mathbb{E}[V_\ell V_i](u - \mu_i), \:\:\:\forall \ell \in [n]\setminus (\{i\} \cup J)
		\end{array}\right\},
	\end{split}
\end{align}
are $s$-concave for every $s \in [-\infty, 0]$. Thus, by Lemma~\ref{lemma:S-Concavity-Sum}, $X_{(k)}$ has an $s$-concave distribution for all $s \in [-\infty, 0)$. Therefore, Lemma~\ref{lemma:Borell} implies that the law of $X_{(k)}$ 
is $s$-concave for $s \in [-\infty, 0)$, too.

Next, we establish the claims about the density and the law of $X_{(k)} + U$. The density of $X_{(k)}+U$ is just the sum of the convolutions of the log-concave maps in~\eqref{eq:lemma:S-Concavity-Max-Gaussian-1} with the uniform density on the interval $I \subset \mathbb{R}$. Since the uniform density on convex sets is log-concave, Lemma~\ref{lemma:PreservationLogConcavity-Convolution} implies that these convolutions are log-concave functions, too. Hence, by Lemmas~\ref{lemma:S-Concavity-Monotonicity-in-S} and~\ref{lemma:S-Concavity-Sum}, $X_{(k)} + U$ has an $s$-concave distribution for all $s \in [-\infty, 0)$, and, by Lemma~\ref{lemma:Borell}, the law of $X_{(k)} +U$ is $s$-concave for $s \in [-\infty, 0)$.
\end{proof}

The third proposition provides lower and upper bounds on the location and scale invariant quantity $\mathrm{Var}(Z) M^2(Z)$ when $Z$ has an $s$-concave distribution with $s \in (-1/3, 0]$. This is equivalent to finding the absolute constants $C_1, C_2 > 0$ in inequality~\eqref{eq:subsec:Proof-AntiConcentration-OrderStatistics-3} over the collection of absolutely continuous $s$-concave probability measures with mode constrained to one.

\begin{proposition}\label{proposition:S-Concavity-Affine-Invariance}
	Let $Z$ be a random variable with $s$-concave distribution for some $s \in (-1/3, 0]$. Then,
	\begin{align*}
		\frac{1}{12} \leq \mathrm{Var}(Z) M^2(Z)\leq  \frac{4 ( 1 + s)^3}{(1 + 3s)(1 + 2s)^2}.
	\end{align*}
	The lower bound holds for arbitrary, not necessarily $s$-concave distributions.
\end{proposition}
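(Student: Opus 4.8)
The plan is to handle both inequalities by first using the scale and location invariance of $\mathrm{Var}(Z)M^2(Z)$ recorded in Section~\ref{subsec:ProofStrategy-AntiConcentration-OrderStatistics} to reduce to the normalization $M(Z)=1$; after this reduction both statements become assertions about densities bounded (a.e.) by $1$.

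For the lower bound, which uses no $s$-concavity, suppose $\sup f_Z\le 1$ a.e. and compare $f_Z$ with the indicator $g=\mathbf{1}_{[\mathbb{E}Z-1/2,\,\mathbb{E}Z+1/2]}$, which also has unit mass and sup $1$. Since $f_Z-g\le 0$ on the interval and equals $f_Z\ge 0$ off it, while $(x-\mathbb{E}Z)^2\le 1/4$ on the interval and $>1/4$ off it, a one-line ``bathtub'' estimate gives
\begin{align*}
	\int (x-\mathbb{E}Z)^2\big(f_Z(x)-g(x)\big)\,dx \;\ge\; \tfrac14\int \big(f_Z-g\big)\,dx \;=\; 0,
\end{align*}
hence $\mathrm{Var}(Z)\ge \int_{-1/2}^{1/2}u^2\,du = 1/12$.

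For the upper bound, normalize $\sup f_Z=1$; the density is then unimodal, so fix a mode $m$ and bound $\mathrm{Var}(Z)\le \mathbb{E}[(Z-m)^2]=\int_0^\infty t^2\big(f_Z(m+t)+f_Z(m-t)\big)\,dt$. Each half $h_\pm(t):=f_Z(m\pm t)$ is a non-increasing $s$-concave function on $[0,\infty)$ with $h_\pm(0)=1$ and $\int_0^\infty h_\pm = q_\pm$, where $q_++q_-=1$. The crux is a one-sided moment lemma: for every non-increasing $s$-concave $h$ on $[0,\infty)$ with $h(0)=1$ and $\int_0^\infty h = q$ (the case $s=0$ being the log-concave analogue),
\begin{align*}
	\int_0^\infty t^2 h(t)\,dt \;\le\; \frac{2(1+s)^2}{(1+2s)(1+3s)}\,q^3,
\end{align*}
with equality for $h_*(t)=\big(1+\tfrac{-s}{q(1+s)}\,t\big)^{1/s}$. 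Granting this, $\mathrm{Var}(Z)\le \tfrac{2(1+s)^2}{(1+2s)(1+3s)}(q_+^3+q_-^3)\le \tfrac{2(1+s)^2}{(1+2s)(1+3s)}$ since $q_+^3+q_-^3\le(q_++q_-)^3=1$, and an elementary comparison (which reduces to $1\le 2$) shows $\tfrac{2(1+s)^2}{(1+2s)(1+3s)}\le \tfrac{4(1+s)^3}{(1+3s)(1+2s)^2}$ for $s\in(-1/3,0]$, giving the claimed bound.

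It remains to prove the one-sided moment lemma, and this is the step I expect to be the main obstacle. The feasible class $\{h\ge 0:\ h\ \text{non-increasing},\ s\text{-concave on }[0,\infty),\ h(0)=1,\ \int_0^\infty h = q\}$ is convex: closure under addition is Lemma~\ref{lemma:S-Concavity-Sum}, and closure under multiplication by positive scalars is immediate, so the linear functional $h\mapsto\int_0^\infty t^2 h$ attains its maximum at an extreme point of this class. The work is to show that the extreme points are precisely the densities $h_*$ (with any truncated or degenerate competitors seen to be suboptimal), after which the first, second, and third moments of $(1+\beta t)^{1/s}$ are elementary integrals and finiteness of $\int_0^\infty t^2 h_*$ is exactly the constraint $s>-1/3$. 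I would establish extremality either by a direct mass-transport argument — given a feasible $h$ not of the form $h_*$, perturb it within the class so as to strictly increase $\int_0^\infty t^2 h$ while holding $h(0)$ and $\int_0^\infty h$ fixed — or by invoking the known description of extreme points of one-dimensional $s$-concave densities with prescribed value at the boundary of their support.
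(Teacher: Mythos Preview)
Your lower bound is correct and equivalent to the paper's: both exploit only $\sup f_Z\le 1$, you via a bathtub comparison with $\mathbf{1}_{[\mathbb{E}Z-1/2,\mathbb{E}Z+1/2]}$, the paper via the equivalent tail estimate $\mathbb{P}\{|Z-\mathbb{E}Z|\ge z\}\ge 1-2z$ and Fubini.

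For the upper bound you take a genuinely different route from the paper. The paper works with the quantile--density function $I(t)=f_Z(Q_Z(t))$, invokes Bobkov's characterization (Lemma~\ref{lemma:Bobkov}) that $I^{1/(1-s')}$ is concave on $[0,1]$ (where $s'$ is the $s$-concavity parameter of the \emph{law}, related to your $s$ via Lemma~\ref{lemma:Borell}), bounds $I$ below pointwise by its concave envelope, and integrates the resulting expression for $\mathrm{Var}(Z)=\tfrac12\iint(\int_t^u dv/I(v))^2\,dt\,du$. Your approach---bound $\mathrm{Var}(Z)\le\mathbb{E}[(Z-m)^2]$ and control each half by a one-sided moment lemma---would, if completed, yield the sharper constant $\tfrac{2(1+s)^2}{(1+2s)(1+3s)}$ in place of the paper's $\tfrac{4(1+s)^3}{(1+3s)(1+2s)^2}$.

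The gap is exactly where you flag it: the one-sided moment lemma is asserted, not proved, and your proposed extreme-point route is problematic. You claim ``the extreme points are precisely the densities $h_*$'', but for fixed $q$ there is a \emph{unique} such $h_*$; if this were the only extreme point, Krein--Milman would force the feasible class to be a singleton, which it is not. You also do not address compactness, without which the maximum need not be attained at all. The ``known description of extreme points'' you allude to does not single out $h_*$ either.

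There is, however, a direct proof that avoids extreme points entirely and reuses your own bathtub trick. For $s<0$, both $h^s$ and $h_*^s$ are convex with value $1$ at $0$, and $h_*^s(t)=1+\beta t$ is affine; hence $g:=h^s-h_*^s$ is convex with $g(0)=0$, so $g$ changes sign at most once on $[0,\infty)$, from $-$ to $+$. Since $u\mapsto u^{1/s}$ is decreasing, $h-h_*$ changes sign at most once from $+$ to $-$, say at $a\in[0,\infty]$. Together with $\int_0^\infty(h-h_*)=0$ this gives
\[
\int_0^\infty t^2(h-h_*)\,dt \;=\;\int_0^a t^2(h-h_*)\,dt+\int_a^\infty t^2(h-h_*)\,dt\;\le\; a^2\int_0^\infty(h-h_*)\,dt\;=\;0,
\]
by exactly the inequality you used for the lower bound. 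The $s=0$ case is identical with $\log h-\log h_*$ concave. This closes the gap and makes your argument complete (and strictly sharper than the paper's).
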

\begin{remark}
	The lower bound is tight and attained by the uniform distribution. The upper bound is loose. For example, if $Z$ has log-concave distribution~\cite{bobkov2015concentration} obtain an upper bound of $1$, and if, in addition, the median of $Z$ is zero,~\cite{bobkov1999isoperimetric} establishes an even sharper upper bound of $1/2$.
\end{remark}

For the proof we borrow ideas from the paper by~\cite{bobkov2015concentration}. We also use the following characterization of $s$-concave probability measures on the real line.

\begin{lemma}[Lemma 2.2,~\citeauthor{bobkov2007large},~\citeyear{bobkov2007large}]\label{lemma:Bobkov}
	Let $s \in [-\infty, 1)$, $\mathcal{L}$ be the Lebesgue measure on real line, and $P$ be a non-degenerate probability measure on $\left(\mathbb{R},\mathcal{B}(\mathbb{R})\right)$. $P$ is an $s$-concave probability measure if and only if $\left(f \circ Q\right)^{1/(1-s)}$ is concave, where $f = \frac{dP}{d\mathcal{L}}$ and $Q(t) := \inf\{x \in \mathbb{R} :  P\left([-\infty, x]\right) \geq t\}$.
\end{lemma}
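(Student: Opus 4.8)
The plan is to combine Borell's density criterion (Lemma~\ref{lemma:Borell} with $n=1$) with an explicit change of variables relating the density of $P$ to its quantile function. Write $F$ for the distribution function of $P$ and $Q = F^{-1}$ for its quantile function. Since $P$ is non-degenerate, $\mathrm{supp}(P)$ is an interval on whose interior $f>0$ and $F$ is a strictly increasing, absolutely continuous bijection onto $(0,1)$; hence $Q$ is well defined, locally Lipschitz, and $Q'(t) = 1/f(Q(t))$ for a.e.\ $t$. Put $I := f \circ Q$ on $(0,1)$ and $\beta := \gamma_1(s) = s/(1-s)$, so that $\beta = -1$ when $s = -\infty$, $1/(1-s) = 1+\beta$, and $\beta > -1$ whenever $s > -\infty$ (indeed $1+\beta = 1/(1-s)$). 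By Lemma~\ref{lemma:Borell}, $P$ is $s$-concave if and only if $f$ is $\beta$-concave on the interior of $\mathrm{supp}(P)$, so it remains to prove the equivalence: $f$ is $\beta$-concave $\iff$ $I^{1+\beta}$ is concave on $(0,1)$, with the usual convention that $I^{0}$ stands for $\log I$ in the case $\beta = -1$.

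Assume first that $f$ is $C^2$ and positive on an open support interval. Writing $x = Q(t)$ and using $Q' = 1/f$ twice, a direct computation gives, for $\beta > -1$,
\[
	\frac{d^2}{dt^2}\, I(t)^{1+\beta} \;=\; (1+\beta)\, f(x)^{\beta-3}\Big[(\beta-1)\,f'(x)^2 + f(x)\,f''(x)\Big],
\]
and $\frac{d^2}{dt^2}\log I(t) = f(x)^{-4}\big[f(x)f''(x) - 2f'(x)^2\big]$ when $\beta = -1$; since $1+\beta > 0$ and $f > 0$, concavity of the left-hand side is in every case equivalent to the pointwise inequality $f f'' + (\beta-1)(f')^2 \le 0$ on the support. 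On the other hand $\frac{d^2}{dx^2}f^\beta = \beta\, f^{\beta-2}\big[(\beta-1)(f')^2 + f f''\big]$ for $\beta \ne 0$, and $\frac{d^2}{dx^2}\log f = f^{-2}\big[f f'' - (f')^2\big]$ for $\beta = 0$; so in each regime the definition of $\beta$-concavity --- $f^\beta$ concave for $\beta > 0$, $f^\beta$ convex for $\beta < 0$, $1/f$ convex for $\beta = -1$, $\log f$ concave for $\beta = 0$ --- unwinds to the same inequality $f f'' + (\beta-1)(f')^2 \le 0$, the sign of $\beta$ in the prefactor flipping the required concavity/convexity in lockstep. This settles both directions in the smooth case.

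It remains to drop the smoothness assumption, which I would do by mollification. If $f$ is $\beta$-concave then $g := f^\beta$ (read as $\log f$, resp.\ $1/f$, in the limiting cases) is concave or convex and locally Lipschitz on the interior of $\mathrm{supp}(P)$; convolving $g$ with a mollifier on compact subintervals and renormalizing yields smooth $\beta$-concave densities $f_\varepsilon$, whose laws $P_\varepsilon$ are $s$-concave by Lemma~\ref{lemma:Borell}, with $f_\varepsilon \to f$, $F_\varepsilon \to F$, $Q_\varepsilon \to Q$ and $I_\varepsilon^{1+\beta} \to I^{1+\beta}$ pointwise on $(0,1)$. A pointwise limit of concave functions is concave, so the forward implication passes to the limit; for the converse, a concave $I^{1+\beta}$ forces $f \circ Q$, and hence --- via the identity $F' = f = I \circ F$ --- the density $f$ itself, to be locally Lipschitz, after which the same approximation applies. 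In the case $\beta = -1$ (i.e.\ $s = -\infty$) the inequality $f f'' + (\beta-1)(f')^2 \le 0$ reads $f f'' \le 2(f')^2$, which is exactly convexity of $1/f$ and concavity of $\log(f \circ Q)$.

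The step I expect to be the main obstacle is the non-smooth bookkeeping: justifying the change of variables and the chain rule when $f$ is only locally Lipschitz (cleanly handled through the one-sided derivatives of the relevant concave/convex functions, which exist everywhere and are monotone), and verifying that the mollification and exhaustion do not disturb the normalization near points where $f$ vanishes or where $\mathrm{supp}(P)$ is unbounded.
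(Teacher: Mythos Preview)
The paper does not prove this lemma at all: it is quoted verbatim as Lemma~2.2 from \cite{bobkov2007large} and used as a black box in the proof of Proposition~\ref{proposition:S-Concavity-Affine-Invariance}. So there is no ``paper's proof'' to compare against; your proposal is an independent attempt to supply one.

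On the merits, your argument is sound. The reduction via Lemma~\ref{lemma:Borell} to the equivalence ``$f$ is $\beta$-concave $\iff$ $(f\circ Q)^{1+\beta}$ is concave'' with $\beta=\gamma_1(s)=s/(1-s)$ is the natural first move, and your second-derivative calculation in the $C^2$ case is correct: both sides unwind to the single pointwise inequality $f f'' + (\beta-1)(f')^2 \le 0$, with the sign of $\beta$ in the prefactor of $(f^\beta)''$ exactly tracking the concave/convex switch in the definition of $\beta$-concavity. The limiting cases $\beta=0$ and $\beta=-1$ are also handled correctly.

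Your mollification step for the non-smooth case is the right idea and will go through, but let me flag one point you should be explicit about. For the forward direction you propose to mollify $g=f^\beta$ (or $\log f$, or $1/f$) and then take $f_\varepsilon := g_\varepsilon^{1/\beta}$; this preserves $\beta$-concavity because convolution with a probability measure preserves concavity/convexity, and the subsequent renormalization is harmless since it multiplies $f_\varepsilon^\beta$ by a constant. What you then need is that the associated quantile functions $Q_\varepsilon$ converge to $Q$ pointwise on $(0,1)$, so that $I_\varepsilon^{1+\beta}\to I^{1+\beta}$; this follows from $F_\varepsilon\to F$ locally uniformly (which in turn follows from $f_\varepsilon\to f$ in $L^1_{\mathrm{loc}}$) together with strict monotonicity of $F$ on the interior of the support. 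For the reverse direction you can avoid mollification altogether: a concave function $J=I^{1+\beta}$ on $(0,1)$ has monotone one-sided derivatives everywhere, and since $Q'=1/I$ a.e., the chain rule for monotone absolutely continuous functions gives that $t\mapsto (1+\beta)^{-1}J'(t)$ equals the one-sided derivative of $f^\beta$ at $x=Q(t)$ (read as $\log f$ when $\beta=0$); monotonicity of $J'$ in $t$ then transfers, via the monotone bijection $Q$, to monotonicity of $(f^\beta)'$ in $x$, which is exactly concavity/convexity of $f^\beta$. This sidesteps the endpoint and normalization bookkeeping you were worried about.
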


\begin{proof}[Proof of Proposition~\ref{proposition:S-Concavity-Affine-Invariance}]
	As explained in Section~\ref{subsec:ProofStrategy-AntiConcentration-OrderStatistics} we may assume without loss of generality that $M(Z) = 1$. It therefore suffices to derive lower and upper bounds on the variance of $Z$.
	
	To establish the lower bound, put $H(z) = \mathbb{P}\left\{|Z - \mathbb{E}[Z] | < z\right\}$. Then, $H(0) = 0$ and $H'(z) = f_Z(z +\mathbb{E}[Z]) + f_Z(-z +\mathbb{E}[Z]) \leq 2 M(Z) = 2$. Therefore, $H(z) = \int_0^z H'(z) dz \leq 2 z$ and 
	\begin{align*}
		\mathbb{P}\left\{|Z - \mathrm{E}[Z] | \geq z\right\} = 1-  H(z) \geq 1 - 2z.
	\end{align*}
	Hence, by Fubini-Tonelli,
	\begin{align*}
		\mathrm{Var}(Z) &= \int_0^\infty 2 z \mathbb{P}\left\{|Z - \mathrm{E}[Z] | \geq z\right\} dz \\
		&\geq  \int_0^{1/2} 2z \mathbb{P}\left\{|Z - \mathrm{E}[Z] | \geq z\right\} dz \geq  \int_0^{1/2} 2z(1-2z)dz = \frac{1}{12}.
	\end{align*}
	Since we have not used $s$-concavity in above derivation, this lower bound holds for any statistic $Z$.
	
	To establish the upper bound, define $I(t) = f_Z\left(Q_Z(t)\right)$, where $Q_Z$ denotes the quantile function associated with $Z$, i.e. $Q_Z(t) = \inf\left\{z \in \mathbb{R}: \mathbb{P}\left\{Z \leq z\right\} \geq t\right\}$. Next, recall that
	\begin{align*}
		\mathrm{Var}(Z) = \frac{1}{2} \int_0^1 \int_0^1 \left(Q_Z(t) - Q_Z(u)\right)^2dt du = \frac{1}{2}\int_0^1\int_0^1 \left(\int_t^u \frac{dv}{I(v)}\right)^2dt du.
	\end{align*}
	Thus, to upper bound the variance of $Z$ we only need to find a lower bound on $I$. By Lemma~\ref{lemma:Borell} the premise of $Z$ being a random variable with $s$-concave distribution for some $s \in (-1/3, 0]$ is equivalent to the law of $Z$ being an $s$-concave probability measure for some $s \in (-1/2, 0]$. Thus, Lemma~\ref{lemma:Bobkov} implies that $I^{1/(1-s)}$ is concave on $[0,1]$ and for some $s \in (-1/2, 0]$. By compactness of $[0,1]$, $I^{1/(1-s)}$ attains its maximum at some $\alpha \in [0,1]$ such that $I^{1/(1-s)}(\alpha) = M^{1/(1-s)}(Z) = 1$. Moreover, for all $v \in [0, \alpha]$,
	\begin{align*}
		I^{1/(1-s)}(v) &= I^{1/(1-s)}\left(\frac{v}{\alpha} \alpha + \left(1- \frac{v}{\alpha}\right) 0\right) \\
		&\geq \frac{v}{\alpha} I^{1/(1-s)}(\alpha) + \left(1- \frac{v}{\alpha}\right)I^{1/(1-s)}(0) \geq \frac{v}{\alpha},
	\end{align*}
	and, for all $v \in (\alpha, 1]$,
	\begin{align*}
		I^{1/(1-s)}(v) &= I^{1/(1-s)}\left(\frac{1-v}{1-\alpha} \alpha + \left(1- \frac{1-v}{1-\alpha}\right) 1\right)\\
		&\geq \frac{1-v}{1-\alpha} I^{1/(1-s)}(\alpha) + \left(1- \frac{1-v}{1-\alpha}\right)I^{1/(1-s)}(1) \geq \frac{1-v}{1-\alpha}.
	\end{align*}
	Whence, $I$ admits the pointwise lower bound
	\begin{align*}
		I(v) \geq \left(\frac{v}{\alpha}\right)^{1-s} \wedge \left(\frac{1- v}{1-\alpha}\right)^{1-s}.
	\end{align*}
	Thus, we have
	\begin{align*}
		\left( \int_t^u \frac{dv}{I(v)} \right)^2 &\leq 2\left( \int_t^u \left(\frac{\alpha}{v}\right)^{1-s}\mathbf{1}\{v \leq \alpha \}dv\right)^2 + 2\left(\int_t^u \left(\frac{1-\alpha}{1-v}\right)^{1-s}\mathbf{1}\{v > \alpha \}dv \right)^2\\
		&\leq 2 \left(\int_t^u \left(\frac{1}{v}\right)^{1-s}dv\right)^2 + 2\left(\int_t^u \left(\frac{1}{1-v}\right)^{1-s}dv\right)^2\\
		& \leq \frac{2}{s^2} \left( \left(\frac{1}{t}\right)^{-s} - \left(\frac{1}{u}\right)^{-s}\right)^2 + \frac{2}{s^2} \left( \left(\frac{1}{1- u}\right)^{-s}- \left(\frac{1}{1- t}\right)^{-s} \right)^2.
	\end{align*}
	The expression in the last line is integrable in $(t, u)$ over $[0,1]^2$ since $s > -1/2$ (and this is the sole reason why have restricted the range of $s$!). In particular, we obtain
	\begin{align*}
		\frac{1}{2}\int_0^1\int_0^1 \left(\int_t^u \frac{dv}{I(v)}\right)^2dt du \leq \frac{4}{(2s + 1)(s + 1)^2}.
	\end{align*}
	By Lemma~\ref{lemma:Borell} we can translate this into the following upper bound on the variance of random variables $Z$ with $s$-concave distribution and $s \in (-1/3, 0)$:
	\begin{align*}
		\mathrm{Var}(Z) \leq \frac{4 ( 1 + s)^3}{(1 + 3s)(1 + 2s)^2}.
	\end{align*}
	By continuity above bound continues to hold for $s= 0$. 
\end{proof}

The proof of Theorem~\ref{theorem:AntiConcentration-OrderStatistics} is now straightforward.

\begin{proof}[Proof of Theorem~\ref{theorem:AntiConcentration-OrderStatistics}]
	By Proposition~\ref{proposition:Density-OrderStatistics-Gaussian}, $X_{(k)}$ has a density and hence, for $\varepsilon > 0$ arbitrary,
	\begin{align*}
		Q(X_{(k)}, \varepsilon) = \varepsilon M(X_{(k)}+ \varepsilon U),
	\end{align*}
	where $U \sim U(0,1)$ is independent of $X_{(k)}$. By Proposition~\ref{proposition:S-Concavity-Max-Gaussian}, $X_{(k)} + \varepsilon U$ has $s$-concave distribution for all $s \in[-\infty, 0)$. Thus, upper and lower bounds on the concentration function follow from Proposition~\ref{proposition:S-Concavity-Affine-Invariance}. Choosing $s= -1/6$, yields the (aesthetically pleasing) absolute constant $\sqrt{125/12} \leq \sqrt{12}$.
\end{proof}

\newpage
\section*{Acknowledgement}
Alexander Giessing is supported by NSF grant DMS-2310578.

\newpage
\normalsize
\setcounter{page}{1}

\bibliography{GBA_51_ref}

\begin{thebibliography}{57}
\providecommand{\natexlab}[1]{#1}
\providecommand{\url}[1]{\texttt{#1}}
\expandafter\ifx\csname urlstyle\endcsname\relax
  \providecommand{\doi}[1]{doi: #1}\else
  \providecommand{\doi}{doi: \begingroup \urlstyle{rm}\Url}\fi

\bibitem[Biau and Mason(2015)]{biau2015HighDimPNorms}
G.~Biau and D.~M. Mason.
\newblock High-dimensional $p$-norms.
\newblock In \emph{Mathematical Statistics and Limit Theorems}, pages 21--40.
  Springer, 2015.

\bibitem[Bobkov(1999)]{bobkov1999isoperimetric}
S.~Bobkov.
\newblock {Isoperimetric and Analytic Inequalities for Log-ConcaveProbability
  Measures}.
\newblock \emph{The Annals of Probability}, 27\penalty0 (4):\penalty0 1903 --
  1921, 1999.

\bibitem[Bobkov(2007)]{bobkov2007large}
S.~Bobkov.
\newblock {Large deviations and isoperimetry over convex probability measures
  with heavy tails}.
\newblock \emph{Electronic Journal of Probability}, 12\penalty0
  (none):\penalty0 1072 -- 1100, 2007.

\bibitem[Bobkov and Chistyakov(2015)]{bobkov2015concentration}
S.~Bobkov and G.~Chistyakov.
\newblock On concentration functions of random variables.
\newblock \emph{Journal of Theoretical Probability}, 28\penalty0 (3):\penalty0
  976--988, 2015.

\bibitem[Borell(1975{\natexlab{a}})]{borell1975brunn}
C.~Borell.
\newblock {The Brunn-Minkowski inequality in Gauss space}.
\newblock \emph{Inventiones mathematicae}, 30\penalty0 (2):\penalty0 207--216,
  1975{\natexlab{a}}.

\bibitem[Borell(1975{\natexlab{b}})]{borell1975convex}
C.~Borell.
\newblock Convex set functions in $d$-space.
\newblock \emph{Periodica Mathematica Hungarica}, 6:\penalty0 1588--2829,
  1975{\natexlab{b}}.

\bibitem[Cacoullos(1982)]{cacoullus1982upper}
T.~Cacoullos.
\newblock {On Upper and Lower Bounds for the Variance of a Function of a Random
  Variable}.
\newblock \emph{The Annals of Probability}, 10\penalty0 (3):\penalty0 799 --
  809, 1982.

\bibitem[Carbery and Wright(2001)]{carbery2001Distributional}
A.~Carbery and J.~Wright.
\newblock Distributional and ${L}_q$ norm inequalities for polynomials over
  convex bodies in $\mathbb{R}^n$.
\newblock \emph{Mathematical Research Letters}, 8, 05 2001.

\bibitem[Chatterjee(2014)]{chatterjee2014Superconcentration}
S.~Chatterjee.
\newblock \emph{Superconcentration and Related Topics}.
\newblock Springer Monographs in Mathematics. Springer, 2014.

\bibitem[Chen and Fang(2015{\natexlab{a}})]{chen2015error}
L.~H. Chen and X.~Fang.
\newblock {On the error bound in a combinatorial central limit theorem}.
\newblock \emph{Bernoulli}, 21\penalty0 (1):\penalty0 335 -- 359,
  2015{\natexlab{a}}.

\bibitem[Chen and Fang(2015{\natexlab{b}})]{chen2015multivariate}
L.~H.~Y. Chen and X.~Fang.
\newblock Multivariate normal approximation by stein's method: The
  concentration inequality approach, 2015{\natexlab{b}}.

\bibitem[Chernozhukov et~al.(2014)Chernozhukov, Chetverikov, and
  Kato]{chernozhukov2014AntiConfidenceBands}
V.~Chernozhukov, D.~Chetverikov, and K.~Kato.
\newblock Anti-concentration and honest, adaptive confidence bands.
\newblock \emph{The Annals of Statistics}, 42\penalty0 (5):\penalty0
  1787--1818, 2014.

\bibitem[Chernozhukov et~al.(2015)Chernozhukov, Chetverikov, and
  Kato]{chernozhukov2015ComparisonAnti}
V.~Chernozhukov, D.~Chetverikov, and K.~Kato.
\newblock Comparison and anti-concentration bounds for maxima of gaussian
  random vectors.
\newblock \emph{Probability Theory and Related Fields}, 162\penalty0
  (1):\penalty0 47--70, Jun 2015.

\bibitem[Chernozhukov et~al.(2017)Chernozhukov, Chetverikov, and
  Kato]{chernozhukov2017Nazarov}
V.~Chernozhukov, D.~Chetverikov, and K.~Kato.
\newblock {D}etailed {P}roof of {N}azarov's {I}nequality.
\newblock \emph{arXiv preprint, arXiv:1711.10696}, 2017.

\bibitem[Chernozhukov et~al.(2020)Chernozhukov, Chetverikov, and
  Koike]{chernozhukov2021NearlyOptimalCLT}
V.~Chernozhukov, D.~Chetverikov, and Y.~Koike.
\newblock Nearly optimal central limit theorem and bootstrap approximations in
  high dimensions.
\newblock 2020.

\bibitem[Deng and Zhang(2020)]{deng2020beyond}
H.~Deng and C.-H. Zhang.
\newblock {Beyond Gaussian approximation: Bootstrap for maxima of sums of
  independent random vectors}.
\newblock \emph{The Annals of Statistics}, 48\penalty0 (6):\penalty0 3643 --
  3671, 2020.

\bibitem[Ding et~al.(2015)Ding, Eldan, and Zhai]{ding2015multiple}
J.~Ding, R.~Eldan, and A.~Zhai.
\newblock {On multiple peaks and moderate deviations for the supremum of a
  Gaussian field}.
\newblock \emph{The Annals of Probability}, 43\penalty0 (6):\penalty0 3468 --
  3493, 2015.

\bibitem[Esseen(1966)]{esseen1966kolmogorov}
C.~G. Esseen.
\newblock {On the Kolmogorov-Rogozin inequality for the concentration
  function}.
\newblock \emph{Zeitschrift f{\"u}r Wahrscheinlichkeitstheorie und Verwandte
  Gebiete}, 5\penalty0 (3):\penalty0 210--216, 1966.

\bibitem[Esseen(1968)]{esseen1968concentration}
C.~G. Esseen.
\newblock On the concentration function of a sum of independent random
  variables.
\newblock \emph{Zeitschrift f{\"u}r Wahrscheinlichkeitstheorie und Verwandte
  Gebiete}, 9\penalty0 (4):\penalty0 290--308, 1968.

\bibitem[Fox et~al.(2021)Fox, Kwan, and Sauermann]{fox2021anti}
J.~Fox, M.~Kwan, and L.~Sauermann.
\newblock {Anti-concentration for subgraph counts in random graphs}.
\newblock \emph{The Annals of Probability}, 49\penalty0 (3):\penalty0 1515 --
  1553, 2021.

\bibitem[Gaenssler et~al.(2007)Gaenssler, Moln{\'a}r, and
  Rost]{gaenssler2007continuity}
P.~Gaenssler, P.~Moln{\'a}r, and D.~Rost.
\newblock On continuity and strict increase of the cdf for the sup-functional
  of a gaussian process with applications to statistics.
\newblock \emph{Results in Mathematics}, 51\penalty0 (1):\penalty0 51--60,
  2007.

\bibitem[Giessing(2023)]{giessing2023Gaussian}
A.~Giessing.
\newblock {Gaussian and Bootstrap Approximations for Suprema of Empirical
  Processes}, 2023.

\bibitem[Giessing and Fan(2023)]{giessing2023bootstrap}
A.~Giessing and J.~Fan.
\newblock {A Bootstrap Hypothesis Test for High-Dimensional Mean Vectors},
  2023.

\bibitem[G{\"o}tze and Zaitsev(2014)]{goetze2014ExplicitRates}
F.~G{\"o}tze and A.~Y. Zaitsev.
\newblock Explicit rates of approximation in the {CLT} for quadratic forms.
\newblock \emph{Ann. Probab.}, 42\penalty0 (1):\penalty0 354--397, 01 2014.

\bibitem[G{\"o}tze et~al.(2019)G{\"o}tze, Naumov, Spokoiny, and
  Ulyanov]{goetze2019LargeBall}
F.~G{\"o}tze, A.~Naumov, V.~Spokoiny, and V.~Ulyanov.
\newblock Large ball probabilities, {G}aussian comparison and
  anti-concentration.
\newblock \emph{Bernoulli}, 25\penalty0 (4A):\penalty0 2538--2563, 11 2019.

\bibitem[Ibragimov et~al.(1976)Ibragimov, Sudakov, and
  Tsirelson]{ibragimov1976norms}
I.~Ibragimov, V.~Sudakov, and B.~Tsirelson.
\newblock {Norms of Gaussian sample functions}.
\newblock In \emph{Proceedings of the third Japan USSR symposium on probability
  theory, lecture notes in math}, volume 550, pages 20--41, 1976.

\bibitem[Kesten(1969)]{kesten1969sharper}
H.~Kesten.
\newblock A sharper form of the doeblin-l{\'e}vy-kolmogorov-rogozin inequality
  for concentration functions.
\newblock \emph{Mathematica Scandinavica}, 25:\penalty0 133--144, 1969.

\bibitem[Klivans et~al.(2008)Klivans, O'Donnell, and
  Servedio]{klivans2008learning}
A.~R. Klivans, R.~O'Donnell, and R.~A. Servedio.
\newblock Learning geometric concepts via gaussian surface area.
\newblock In \emph{2008 49th Annual IEEE Symposium on Foundations of Computer
  Science}, pages 541--550, 2008.

\bibitem[Kolmogorov(1958)]{kolmogorov1958sur}
A.~Kolmogorov.
\newblock Sur les propri\'et\'es des fonctions de concentrations de {M.} {P.}
  {L\'evy}.
\newblock \emph{Annales de l'institut Henri Poincar\'e}, 16\penalty0
  (1):\penalty0 27--34, 1958.

\bibitem[Kolmogorov(1956)]{kolmogorov1956two}
A.~N. Kolmogorov.
\newblock Two uniform limit theorems for sums of independent random variables.
\newblock \emph{Theory of Probability \& Its Applications}, 1\penalty0
  (4):\penalty0 384--394, 1956.

\bibitem[Kuelbs and Li(1993)]{kuelbs1993metric}
J.~Kuelbs and W.~Li.
\newblock {Metric Entropy and the Small Ball Problem for Gaussian Measures}.
\newblock \emph{Journal of Functional Analysis}, 116\penalty0 (1):\penalty0
  133--157, 1993.
\newblock ISSN 0022-1236.

\bibitem[Kwan et~al.(2023)Kwan, Sah, Sauermann, and
  Sawhney]{kwan2023anticoncentration}
M.~Kwan, A.~Sah, L.~Sauermann, and M.~Sawhney.
\newblock {Anticoncentration in Ramsey graphs and a proof of the Erdős–McKay
  conjecture}.
\newblock \emph{Forum of Mathematics, Pi}, 11:\penalty0 e21, 2023.

\bibitem[Landau and Shepp(1970)]{landau1970supremum}
H.~J. Landau and L.~A. Shepp.
\newblock {On the supremum of a Gaussian process}.
\newblock \emph{Sankhy{\=a}: The Indian Journal of Statistics, Series A}, pages
  369--378, 1970.

\bibitem[Le~Cam(1986)]{lecam1986asymptotic}
L.~Le~Cam.
\newblock \emph{Asymptotic Methods in Statistical Decision Theory}.
\newblock Springer Series in Statistics. Springer New York, 1986.

\bibitem[Ledoux(2001)]{ledoux2001concentration}
M.~Ledoux.
\newblock \emph{The concentration of measure phenomenon}.
\newblock AMS, 2001.

\bibitem[L{\'e}vy(1954)]{levy1954theorie}
P.~L{\'e}vy.
\newblock \emph{Th{\'e}orie de l'addition des variables al{\'e}atoires}.
\newblock Gauthier-Villars, 1954.

\bibitem[Li and Linde(1999)]{li1999approximation}
W.~V. Li and W.~Linde.
\newblock {Approximation, Metric Entropy and Small Ball Estimates for Gaussian
  Measures}.
\newblock \emph{The Annals of Probability}, 27\penalty0 (3):\penalty0 1556 --
  1578, 1999.

\bibitem[Lopes(2022{\natexlab{a}})]{lopes2022central}
M.~E. Lopes.
\newblock {Central limit theorem and bootstrap approximation in high
  dimensions: Near $1/\sqrt{n}$ rates via implicit smoothing}.
\newblock \emph{The Annals of Statistics}, 50\penalty0 (5):\penalty0 2492 --
  2513, 2022{\natexlab{a}}.

\bibitem[Lopes(2022{\natexlab{b}})]{lopes2022improved}
M.~E. Lopes.
\newblock Improved rates of bootstrap approximation for the operator norm: A
  coordinate-free approach, 2022{\natexlab{b}}.

\bibitem[Lopes et~al.(2019)Lopes, Blandino, and
  Aue]{lopes2019BootstrappingSpectral}
M.~E. Lopes, A.~Blandino, and A.~Aue.
\newblock {Bootstrapping spectral statistics in high dimensions}.
\newblock \emph{Biometrika}, 106\penalty0 (4):\penalty0 781--801, 09 2019.

\bibitem[Lopes et~al.(2020)Lopes, Lin, and M{\"u}ller]{lopes2020bootstrapping}
M.~E. Lopes, Z.~Lin, and H.-G. M{\"u}ller.
\newblock Bootstrapping max statistics in high dimensions: Near-parametric
  rates under weak variance decay and application to functional and multinomial
  data.
\newblock \emph{Annals of Statistics}, 48\penalty0 (2):\penalty0 1214--1229, 04
  2020.

\bibitem[Marcus and Shepp(1972)]{marcus1972sample}
M.~B. Marcus and L.~A. Shepp.
\newblock {Sample behvarior of Gaussian processes Sample Behavior of Gaussian
  Processes}.
\newblock In \emph{Proceedings of the Sixth Berkeley Symposium on Mathematical
  Statistics and Probability: Held at the Statistical Laboratory, University of
  California, June 21-July 18, 1970. Probability theory}, page 423. Univ of
  California Press, 1972.

\bibitem[Meka et~al.(2016)Meka, Nguyen, and Vu]{meka2016anti}
R.~Meka, O.~Nguyen, and V.~Vu.
\newblock {Anti-concentration for Polynomials of Independent Random Variables}.
\newblock \emph{Theory of Computing}, 12\penalty0 (11):\penalty0 1--17, 2016.

\bibitem[Miroshnikov and Rogozin(1983)]{miroshnikov1983remarks}
A.~L. Miroshnikov and B.~A. Rogozin.
\newblock Remarks on an inequality for the concentration function of sums of
  independent variables.
\newblock \emph{Theory of Probability \& Its Applications}, 27\penalty0
  (4):\penalty0 848--850, 1983.

\bibitem[Mossel et~al.(2010)Mossel, O'Donnell, and
  Oleszkiewicz]{mossel2010noise}
E.~Mossel, R.~O'Donnell, and K.~Oleszkiewicz.
\newblock {Noise stability on functions with low influences: invariance and
  optimality}.
\newblock \emph{Annals of Mathematics}, 171\penalty0 (1):\penalty0 295--341,
  2010.

\bibitem[Nazarov(2003)]{nazarov2003MaximaPerimeter}
F.~Nazarov.
\newblock \emph{On the {M}aximal {P}erimeter of a {C}onvex {S}et in
  $\mathbb{R}^n$ with {R}espect to a {G}aussian {M}easure}, pages 169--187.
\newblock Springer Berlin Heidelberg, Berlin, Heidelberg, 2003.

\bibitem[Nourdin and Viens(2009)]{nourdin2009density}
I.~Nourdin and F.~Viens.
\newblock {Density Formula and Concentration Inequalities with Malliavin
  Calculus}.
\newblock \emph{Electronic Journal of Probability}, 14\penalty0
  (none):\penalty0 2287 -- 2309, 2009.

\bibitem[Rudelson and Vershynin(2008)]{rudelson2008littlewood}
M.~Rudelson and R.~Vershynin.
\newblock The littlewood–offord problem and invertibility of random matrices.
\newblock \emph{Advances in Mathematics}, 218\penalty0 (2):\penalty0 600--633,
  2008.

\bibitem[Rudelson and Vershynin(2009)]{rudelson2009smallest}
M.~Rudelson and R.~Vershynin.
\newblock Smallest singular value of a random rectangular matrix.
\newblock \emph{Communications on Pure and Applied Mathematics}, 62\penalty0
  (12):\penalty0 1707--1739, 2009.

\bibitem[Rudelson and Vershynin(2014)]{rudelson2014small}
M.~Rudelson and R.~Vershynin.
\newblock {Small Ball Probabilities for Linear Images of High-Dimensional
  Distributions}.
\newblock \emph{International Mathematics Research Notices}, 2015\penalty0
  (19):\penalty0 9594--9617, 2014.

\bibitem[Saumard and Wellner(2014)]{saumard2014log-concavity}
A.~Saumard and J.~A. Wellner.
\newblock {Log-concavity and strong log-concavity: A review}.
\newblock \emph{Statistics Surveys}, 8:\penalty0 45--114, 2014.

\bibitem[Talagrand(1994)]{talagrand1994Russo}
M.~Talagrand.
\newblock {On Russo's Approximate Zero-One Law}.
\newblock \emph{The Annals of Probability}, 22\penalty0 (3):\penalty0 1576 --
  1587, 1994.

\bibitem[Tanguy(2015)]{tanguy2015some}
K.~Tanguy.
\newblock Some superconcentration inequalities for extrema of stationary
  gaussian processes.
\newblock \emph{Statistics \& Probability Letters}, 106:\penalty0 239--246,
  2015.

\bibitem[Tanguy(2017)]{tanguy2017quelques}
K.~Tanguy.
\newblock \emph{Quelques in{\'e}galit{\'e}s de superconcentration: th{\'e}orie
  et applications}.
\newblock PhD thesis, Universit{\'e} Paul Sabatier-Toulouse III, 2017.

\bibitem[Tao and Vu(2009)]{tao2009inverse}
T.~Tao and V.~H. Vu.
\newblock {Inverse Littlewood-Offord Theorems and the Condition Number of
  Random Discrete Matrices}.
\newblock \emph{Annals of Mathematics}, 169\penalty0 (2):\penalty0 595--632,
  2009.

\bibitem[Ylvisaker(1965)]{ylvisaker1965expected}
D.~Ylvisaker.
\newblock {The Expected Number of Zeros of a Stationary Gaussian Process}.
\newblock \emph{The Annals of Mathematical Statistics}, 36\penalty0
  (3):\penalty0 1043 -- 1046, 1965.

\bibitem[Ylvisaker(1968)]{ylvisaker1968note}
D.~Ylvisaker.
\newblock {A Note on the Absence of Tangencies in Gaussian Sample Paths}.
\newblock \emph{The Annals of Mathematical Statistics}, 39\penalty0
  (1):\penalty0 261 -- 262, 1968.

\end{thebibliography}

\end{document}